\newtheorem{theorem}{Theorem}[section]
\newtheorem{lemma}[theorem]{Lemma}
\newtheorem{cor}[theorem]{Corollary}
\newtheorem{prop}[theorem]{Proposition}
\theoremstyle{definition}
\newtheorem{defn}[theorem]{Definition}
\newtheorem{ex}[theorem]{Example}
\newtheorem{remark}[theorem]{Remark}
\newtheorem{conj}{Conjecture}
\numberwithin{equation}{section}
\def\D{\mathbf{D}^\mathrm{b}}
\def\DD{\mathbf{D}}
\def\A{\mathcal{A}}
\def\B{\mathcal{B}}
\def\E{\mathcal{E}}
\def\F{\mathcal{F}}
\def\G{\mathcal{G}}
\def\I{\mathcal{I}}
\def\J{\mathcal{J}}
\def\L{\mathcal{L}}
\def\N{\mathcal{N}}
\def\O{\mathcal{O}}
\def\Q{\mathcal{Q}}
\def\R{\mathcal{R}}
\def\cC{\mathcal{C}}
\def\cH{\mathcal{H}}
\def\cK{\mathcal{K}}
\def\cS{\mathcal{S}}
\def\cV{\mathcal{V}}
\def\cW{\mathcal{W}}
\def\fS{\mathfrak{S}}
\def\P{\mathbb{P}}
\def\AA{\mathbb{A}}
\def\LL{\mathbb{L}}
\def\RR{\mathbb{R}}
\def\ZZ{\mathbb{Z}}
\newcommand{\kk}{\Bbbk}
\newcommand{\wt}{\widetilde}
\newcommand{\ts}{\tilde{\Sigma}}
\newcommand{\bB}{\bar{\B}}
\newcommand{\cchar}{\operatorname{char}}
\newcommand{\IIm}{\operatorname{Im}}
\newcommand{\rank}{\operatorname{rank}}
\newcommand{\tr}{\operatorname{tr}}
\newcommand{\Pic}{\operatorname{Pic}}
\newcommand{\Bl}{\operatorname{Bl}}
\newcommand{\Gr}{\operatorname{Gr}}
\newcommand{\Proj}{\operatorname{Proj}}
\newcommand{\Spec}{\operatorname{Spec}}
\newcommand{\Coh}{\operatorname{Coh}}
\newcommand{\QCoh}{\operatorname{QCoh}}
\newcommand{\perf}{\operatorname{perf}}
\newcommand{\Mod}{\operatorname{Mod}}
\newcommand{\op}{\operatorname{op}}
\newcommand{\Ext}{\operatorname{Ext}}
\newcommand{\Hom}{\operatorname{Hom}}
\newcommand{\coker}{\operatorname{coker}}
\newcommand{\Sym}{\operatorname{Sym}}
\newcommand{\id}{\operatorname{id}}
\newcommand{\Tot}{\operatorname{Tot}}
\newcommand{\Br}{\operatorname{Br}}
\newcommand{\ev}{\operatorname{ev}}
\newcommand{\sEnd}{\mathscr{E}\kern -1pt nd}
\newcommand{\sHom}{\mathscr{H}\kern -2pt om}
\title{Residual categories of quadric surface bundles}
\author{Fei Xie}
\address{School of Mathematics, University of Edinburgh, James Clerk Maxwell Building, Peter Guthrie Tait Road, Edinburgh, EH9 3FD, UK}
\email{fei.xie@ed.ac.uk}
\subjclass[2020]{14F08, 14D06, 16E35}
\begin{document}
%-------------------------------------------------
\begin{abstract}
We show that the residual categories of quadric surface bundles are equivalent to the (twisted) derived categories of some scheme under the following hypotheses. Case 1: The quadric surface bundle has a smooth section. Case 2: The total space of the quadric surface bundle is smooth and the base is a smooth surface. We provide two proofs in Case 1 describing the scheme as the hyperbolic reduction and as a subscheme of the relative Hilbert scheme of lines, respectively. In Case 2, the twisted scheme is obtained by performing birational transformations to the relative Hilbert scheme of lines. Finally, we apply the results to certain complete intersections of quadrics.
\end{abstract}

\maketitle

\tableofcontents

%-------------------------------------------------
\section{Introduction}
For a flat family $f\colon X\to S$ of Fano varieties of index $n$, there is a semiorthogonal decomposition (SOD)
\begin{equation*}
    \D(X) = \langle \A_X, f^*\D(S) \otimes \O_X(1), \dots, f^*\D(S) \otimes \O_X(n) \rangle
\end{equation*}
where $\A_X$ is called the \textit{residual category} of $X$. In other words, the residual category $\A_X$ is the non-trivial component in the derived category. We can view $\A_X$ as a refined invariant of $X$. For example, the refined derived Torelli problem asks if $X$ is determined by $\A_X$. It is interesting to see when $\A_X$ is (twisted) geometric, i.e., equivalent to the (twisted) derived category of some scheme. In this paper, we study the problem when $f$ is a flat quadric surface bundle allowing fibers of corank $2$ and prove that their residual categories are (twisted) geometric in two cases.

Let $p\colon \Q\to S$ be a flat quadric surface bundle where $S$ is an integral noetherian scheme over a field $\kk$ with $\cchar(\kk)\neq 2$. Let $\A_{\Q}$ be the residual category of $p$. We start with Case 1. Assume that $p$ has a smooth section as in Definition \ref{regisodef} and the second degeneration $S_2\subset S$ is different from $S$ (The $k$-th degeneration $S_k$ is the locus in $S$ where fibers of $p$ have corank at least $k$.). In this case, $\A_{\Q}$ is geometric.
{\renewcommand{\thetheorem}{\ref{main1}}
\begin{theorem}
In the hypotheses of Case 1, $\A_{\Q}\cong\D(\bar{\Q})$ where $\bar{\Q}$ is the hyperbolic reduction of $\Q$ with respect to the smooth section in Definition \ref{hypreddef}.
\end{theorem}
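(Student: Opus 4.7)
The plan is to combine Kuznetsov's Clifford-algebra semiorthogonal decomposition for quadric fibrations with a sheaf-theoretic hyperbolic reduction of the Clifford algebra. Kuznetsov's theorem identifies the residual category as
\[
\A_\Q \simeq \D(S, \mathrm{Cl}_0),
\]
where $\mathrm{Cl}_0$ is the sheaf of even parts of the Clifford algebra of the quadratic form $(\E, q)$ cutting out $p\colon \Q \to S$ inside $\P_S(\E)$. A smooth section in the sense of Definition~\ref{regisodef} corresponds to a regular isotropic line subbundle $\L \subset \E$, and under the assumption $S_2 \neq S$ one can form the hyperbolic reduction: a quadratic form $(\bar\E, \bar q)$ of generic rank $2$ on $\bar\E := \L^\perp/\L$, whose zero scheme in $\P_S(\bar\E)$ is the double cover $\bar\Q \to S$ of Definition~\ref{hypreddef}.

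The main step is then to establish a Morita equivalence $\mathrm{Cl}_0(\E, q) \sim \mathrm{Cl}_0(\bar\E, \bar q)$ of sheaves of $\O_S$-algebras. Pointwise over a field this is the classical identity
\[
\mathrm{Cl}_0(H \oplus V, h \perp q) \cong \mathrm{Cl}_0(V, q) \otimes_\kk \mathrm{Mat}_2(\kk)
\]
from splitting off a hyperbolic plane. I would globalize this by forming the bimodule attached to the left ideal $\mathrm{Cl}(\E, q) \cdot \L$ inside the full Clifford algebra sheaf, and using regular isotropy of $\L$ to verify that this bimodule is locally free of the correct rank over both $\mathrm{Cl}_0(\E, q)$ and $\mathrm{Cl}_0(\bar\E, \bar q)$. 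Together with the identification of $\mathrm{Cl}_0(\bar\E, \bar q)$ with the direct image of $\O_{\bar\Q}$ to $S$ — which follows from the standard fact that the even Clifford algebra of a rank $2$ form is exactly the discriminant double cover — this yields
\[
\A_\Q \simeq \D(S, \mathrm{Cl}_0(\E, q)) \simeq \D(S, \mathrm{Cl}_0(\bar\E, \bar q)) \simeq \D(\bar\Q).
\]

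The principal difficulty is controlling all these identifications over the degeneration loci where $q$ drops rank. The ``regular'' part of the smooth section hypothesis in Definition~\ref{regisodef} is tailored precisely so that $\L$ is a local direct summand of $\L^\perp$ over all of $S$, and in particular $\L$ avoids the radical of $q$ everywhere; this keeps the hyperbolic reduction and the bimodule above locally free of the expected ranks even over $S_1$ and $S_2$. A secondary care point is to select a version of Kuznetsov's SOD that applies to quadric surface bundles with corank $2$ fibers, which is covered by existing generalizations of the Clifford-algebra SOD to arbitrary flat quadric fibrations with nondegenerate generic fiber.
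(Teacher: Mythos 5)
Your strategy (pass to $\D(S,\B_0)$ via Kuznetsov's SOD, Morita-reduce $\B_0(\E,q)$ to $\B_0(\bar\E,\bar q)$ by splitting off a hyperbolic plane, then geometrize) is not the route the paper takes for Theorem~\ref{main1}, which instead uses the relative linear projection from the smooth section to identify $\Bl_{\P_S(\N)}(\Q)\cong\Bl_{\bar\Q}(\P_S(\E/\N))$, applies the blow-up formula from \cite{jquot-1}, and matches the SODs by mutation (Lemma~\ref{mulem}). Your plan is closer in spirit to the paper's second proof (Theorem~\ref{main2}), but it has a genuine gap at its last step.

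The gap: you assert that $\mathrm{Cl}_0(\bar\E,\bar q)$ is identified with the direct image of $\O_{\bar\Q}$ because ``the even Clifford algebra of a rank $2$ form is the discriminant double cover,'' and that this closes the chain $\D(S,\B_0(\bar q))\simeq\D(\bar\Q)$. This is only true when $\bar q$ is \emph{primitive}, i.e., when $S_2=\emptyset$. In Case 1 one only assumes $S_2\neq S$; over $S_2$ the reduced form $\bar q$ vanishes identically (the hyperbolic reduction preserves corank, and regularity of $\N$ only keeps $\N$ off the radical, it does not prevent $\bar q$ from dropping rank). Consequently $\bar\Q\to S$ is \emph{not} a finite double cover: it has $\P^1$-fibers over $S_2$ and is not even flat, whereas $\Spec_S(\B_0(\bar\E,\bar q))$ has finite, non-reduced fibers over $S_2$. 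So $\B_0(\bar\E,\bar q)$ is not $p_*\O_{\bar\Q}$, and $\D(S,\B_0(\bar\E,\bar q))\simeq\D(\bar\Q)$ is not a consequence of the classical rank-$2$ story. This is exactly the subtle point the paper spends Section~\ref{qfr2} (Lemma~\ref{dim0lem}), Corollary~\ref{genb0cor}, and Proposition~\ref{nceqprop} on: one has to replace $\O_{\bar\Q}$ by the endomorphism algebra $\sEnd(\I_0^{\R_Z})$ of the appropriate Clifford-ideal bundle, prove $\B_0\cong R\beta_*\sEnd(\I_0^{\R_Z})$ (which requires the non-primitive rank-$2$ analysis), and then run a conservativity argument using the $\P^1$-bundle $Z'\to S_2$ to show $R\gamma_*$ is an equivalence. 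Without this your chain of equivalences does not close; the claim you call ``standard'' is precisely where the content lies once $S_2\neq\emptyset$.
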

}
The hyperbolic reduction $\bar{\Q}$ is isomorphic to a subscheme $Z$ of the relative Hilbert scheme of lines $M$ over $S$ parametrizing lines in the fibers of $p\colon \Q \to S$ that intersect the smooth section. Let $\P_Z(\R_Z) \subset \Q \times_S Z$ be the universal family of lines that $Z$ parametrizes. Using this identification, the embedding functor $\A_{\Q}\to \D(\Q)$ can be described explicitly as below.
{\renewcommand{\thetheorem}{\ref{main2}}
\begin{theorem}
In the hypotheses of Case 1, $\A_{\Q}\cong\D(Z)$ 
where $Z$ is introduced above. The embedding functors $ \D(Z)\to \D(\Q)$ are of Fourier-Mukai type with kernels $\cS_n^{\R_Z}, n\in\ZZ$ where $\cS_n^{\R_Z}$ is the $n$-th spinor sheaf with respect to the isotropic subbundle $\R_Z$ in Definition \ref{spinordef}. 
\end{theorem}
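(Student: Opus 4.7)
The plan is to deduce Theorem \ref{main2} from Theorem \ref{main1} by (i) identifying the hyperbolic reduction $\bar{\Q}$ with the subscheme $Z \subset M$, and (ii) expressing the resulting embedding $\D(Z) \hookrightarrow \D(\Q)$ as a Fourier-Mukai transform whose kernel is the spinor sheaf.

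For (i), points of $Z$ parametrize lines in fibers of $p\colon \Q \to S$ passing through the smooth section, while hyperbolic reduction is locally the projection from that section. Classically, projecting a smooth quadric surface from a smooth point gives a birational map whose fibers are precisely the lines through the point, so pointwise $\bar{\Q}_s$ and $Z_s$ are identified. I would make this rigorous in families by constructing a morphism between the two schemes using the universal line $\P_Z(\R_Z) \to \Q \times_S Z$ together with the projection from the section. The hypothesis $S_2 \neq S$ is essential: it ensures that $\R_Z$ is a locally free rank-two isotropic subbundle, so the construction extends over the corank-one degeneration $S_1 \setminus S_2$.

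Granting $\bar{\Q} \cong Z$, Theorem \ref{main1} gives $\A_{\Q} \cong \D(Z)$ abstractly. To match the kernel with $\cS_n^{\R_Z}$, I would trace through the proof of Theorem \ref{main1} and rewrite the embedding functor via the incidence $\P_Z(\R_Z) \hookrightarrow \Q \times_S Z$: its kernel should be a pushforward of a line bundle, twisted by $\O_{\Q}(n)$. The comparison with the spinor sheaf then follows from the standard presentation of $\cS_n^{\R_Z}$ attached to an isotropic subbundle, which over a smooth fiber of $\Q$ is a line bundle supported on the same family of lines. The parameter $n \in \ZZ$ corresponds to the twist $\cS_n^{\R_Z} \cong \cS_0^{\R_Z} \otimes \O_{\Q}(n)$ and to the different embeddings arising from mutating through the Lefschetz components $f^*\D(S) \otimes \O_{\Q}(k)$.

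The main difficulty is verifying that this identification of kernels persists across the corank-one locus in $S_1 \setminus S_2$, where both the spinor sheaf and the universal family develop singularities. I would handle this either by a direct local computation in coordinates near a corank-one fiber, or by checking that both candidate kernels are coherent on $\Q \times_S Z$, flat over $Z$, and agree on the dense open locus over $S \setminus S_1$, which would force them to be isomorphic globally.
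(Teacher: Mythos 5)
Your plan is to extract the Fourier--Mukai kernel by tracing the mutations in the proof of Theorem \ref{main1}, but this is precisely the route the paper discards: both in the introduction and immediately after Theorem \ref{main1} the author states that the equivalence $\A_{\Q}\cong\D(\bar{\Q})$ obtained by mutations loses the information on the embedding functor, and that Theorem \ref{main2} is therefore given a separate proof that works directly with $\A_{\Q}$ rather than with $\D(\Q')$ and mutations. The paper's actual argument bypasses Theorem \ref{main1} entirely: it starts from the known equivalence $\A_{\Q}\cong \D(S,\B_0)$ of \cite{kuzqfib,abbqfib}, whose embedding $\Phi_n$ already comes with an explicit kernel $\cK_n$; it then proves (Corollary \ref{genb0cor} and Proposition \ref{nceqprop}) that the pushforward along the non-commutative morphism $\gamma\colon (Z,\sEnd(\I_0^{\R_Z}))\to (S,\B_0)$ is an equivalence; and finally it composes $\D(Z)\xrightarrow{-\otimes\I_0^{\circ\R_Z}}\D(Z,\sEnd(\I_0^{\R_Z}))\xrightarrow{\gamma_*}\D(S,\B_0)\xrightarrow{\Phi_n}\D(\Q)$ and carries out a direct base-change and convolution computation, using the defining sequence of $\cK_n$ and Lemma \ref{mulisolem}, to show the composed kernel is exactly $\cS_n^{\R_Z}$. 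Your step (i), the identification $\bar{\Q}\cong Z$, is fine and is exactly (\ref{zeqhr}); it is step (ii) that is underpowered.

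There are concrete gaps in step (ii). First, the mutations in Theorem \ref{main1} include $\LL_{\O(H)}$, $-\otimes\omega_{f'}$, and $\LL_{\O(-h)}$, each producing a cone of an adjunction triangle on the blow-up $\Q'$; converting these to an explicit kernel on $\Q\times_S Z$ by convolution, and then simplifying to a sheaf, is a substantial computation that the paper precisely chose to avoid. Nothing guarantees a priori that the resulting kernel is a sheaf rather than a genuine complex, so ``both kernels agree on a dense open, hence agree'' is not automatic: two objects of $\D(\Q\times_S Z)$ that restrict isomorphically to a dense open need not be isomorphic, and you would need torsion-freeness/reflexivity and flatness statements that you have not established for the mutated kernel. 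Second, your description of the family $\{\cS_n^{\R_Z}\}_{n\in\ZZ}$ is inaccurate: the paper only has $\cS_{n+2}^{\R_Z}\cong\cS_n^{\R_Z}\otimes\L$ (for $\L$ a line bundle on $S$, pulled back), together with the long exact sequences (\ref{spinorseq}); there is no isomorphism $\cS_n^{\R_Z}\cong\cS_0^{\R_Z}\otimes\O_{\Q}(n)$, and the different $n$'s do not simply correspond to twisting by $\O_{\Q/S}(n)$. In the paper the indexing by $n$ enters through the family of fully faithful functors $\Phi_n$ with kernels $\cK_n$ from (\ref{kuzsod})--(\ref{fmkseq}), not through mutating past the Lefschetz components.
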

}
In Case 2, $\A_{\Q}$ is twisted geometric. 
{\renewcommand{\thetheorem}{\ref{main3}}
\begin{theorem}
Assume $\kk$ is algebraically closed and $\cchar(\kk)=0$. Let $p\colon \Q\to S$ be a flat quadric surface bundle where $\Q$ is smooth and $S$ is a smooth surface over $\kk$. Then $\A_{\Q}\cong \D(S^+, \A^+)$ where $S^+$ is the resolution of the double cover $\wt{S}$ over $S$ ramified along the (first) degeneration locus $S_1$ and $\A^+$ is an Azumaya algebra on $S^+$. In addition, the Brauer class $[\A^+]\in \Br(S^+)$ is trivial if and only if $p\colon \Q \to S$ has a rational section.
\end{theorem}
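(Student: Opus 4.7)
The plan is to combine Kuznetsov's description of derived categories of quadric bundles via sheaves of even Clifford algebras with a careful birational analysis of the relative Hilbert scheme of lines to handle corank~$2$ fibres. By the standard semiorthogonal decomposition for flat quadric bundles, $\A_{\Q}\cong \D(S,\B_0)$, where $\B_0$ denotes the sheaf of even parts of the Clifford algebra attached to $p$. The centre $\mathcal{Z}(\B_0)$ realizes the finite flat degree $2$ cover $\wt{S}\to S$ ramified along the first degeneration locus $S_1$, and on the complement of the corank-$\geq 2$ locus, $\B_0$ descends to a sheaf of Azumaya algebras of rank $4$ on $\wt{S}$.

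First I would analyse the local geometry of $\wt{S}$ at a corank~$2$ point. Under the hypotheses of Case~$2$, smoothness of $\Q$ forces $S_1$ to have at worst a nodal singularity at each isolated point of $S_2$, so the double cover $\wt{S}\to S$ acquires an $A_1$-singularity above each such point. Taking $S^+$ to be the minimal resolution of $\wt{S}$ fits the geometry of the relative Hilbert scheme of lines $M\to S$: each corank~$2$ fibre contributes a $\P^1\times \P^1$ component to $M$ (the lines in either plane), and collapsing one ruling produces a single exceptional $\P^1$ lying above the node, identifying $S^+$ with the image of a suitable birational modification of $M$.

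Next I would construct the Azumaya algebra $\A^+$ on $S^+$. Over the smooth locus of $\wt{S}$ I pull back the Azumaya algebra obtained from $\B_0$. Across each exceptional $\P^1$ I would extend by taking $\sEnd$ of a (possibly twisted) rank~$2$ bundle produced by the relative spinor construction of Definition~\ref{spinordef}, applied to the universal isotropic subbundle on the modified Hilbert scheme; the canonical pair of plane-components at a corank~$2$ fibre gives compatible local rulings on which to base this construction. The identification $\A_{\Q}\cong \D(S^+,\A^+)$ then follows from a Fourier--Mukai argument parallel to Theorem~\ref{main2}, with kernel built from the universal family of lines on the modified Hilbert scheme twisted by spinor sheaves; fully faithfulness and generation reduce to fibrewise verifications and base change.

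The Brauer-class criterion is comparatively formal: a rational section of $p$ yields a generic isotropic point in each quadric fibre, which Morita trivializes $\B_0$ generically on $\wt{S}$ and hence kills $[\A^+]\in \Br(S^+)$; conversely, a trivialization of $[\A^+]$ produces a rank~$2$ (untwisted) vector bundle on $S^+$ whose projectivization descends to a rational isotropic line in $\Q$, i.e., a rational section. The main obstacle is the construction and verification of $\A^+$ on the exceptional locus: one must extend across the exceptional divisors in a way compatible with both the generic Clifford algebra and the local spinor bundles, and then prove that this extension is genuinely locally Azumaya rather than merely an order. This is precisely where the birational description via the relative Hilbert scheme of lines is essential, since a naive extension of $\B_0$ from the smooth locus of $\wt{S}$ fails to be Azumaya at the nodes.
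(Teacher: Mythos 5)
Your overall strategy matches the paper's: identify $\A_{\Q}\cong\D(S,\B_0)$, observe that $\wt{S}$ acquires $A_1$-singularities over $S_2$, perform a birational modification of the relative Hilbert scheme of lines $M$ to obtain a smooth conic bundle over the resolution $S^+$, and use that conic bundle to build the Azumaya algebra $\A^+$. The Brauer-class criterion is also handled in essentially the same way. However, there are two substantive problems.

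First, and most importantly, your description of the fibre of $M$ over a corank-$2$ point $s\in S_2$ is wrong. You say it is a $\P^1\times\P^1$, but it is in fact the union of two planes $\Sigma_s^+\cup\Sigma_s^-\cong\P^2\cup\P^2$ meeting in a single point $m_s$; this is stated explicitly in Section~\ref{hilbsec} of the paper (and is the feature that makes the corank-$2$ case hard). The $\P^1\times\P^1$ you have in mind is the local model of the nodal threefold $M$ near $m_s$ (the nodal quadric is a cone over $\P^1\times\P^1$), not the fibre $M_s$ itself. Because of this, the birational surgery you describe --- ``collapsing one ruling produces a single exceptional $\P^1$ lying above the node'' --- does not make sense as written and would not produce a conic bundle. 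The correct operation, as in Proposition~\ref{birtran}, is to pass to the small resolution $\wt{M}=\Bl_{\bigsqcup\Sigma_s^+}M$ and then contract the strict transforms $\ts_s^+$ onto the exceptional $(-1)$-curves $l_s$, turning each fibre $\Sigma_s^+\cup\Sigma_s^-$ into a single Hirzebruch surface $\ts_s^-$ fibring over the exceptional $\P^1$ of $S^+$. Verifying that this contraction exists and that $M^+\to S^+$ is a genuinely smooth conic bundle requires the intersection-theoretic computations of Lemmas~\ref{nblem},~\ref{mgeom} and the basepoint-free theorem; none of this is sketched in your proposal.

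Second, the construction of $\A^+$ is the technical core and you acknowledge, without resolving, that ``a naive extension of $\B_0$ fails to be Azumaya at the nodes.'' The paper's solution is concrete: transport the rank-$2$ Clifford ideals $\I_n^{\R_M}$ along the birational modification to produce rank-$2$ bundles $\J_n$ on $M^+$ that restrict to $\O_{\P^1}(-1)^{\oplus 2}$ on every conic fibre (Lemmas~\ref{ires},~\ref{jlem}, Proposition~\ref{azprop}), so that $\sEnd(\J_0)\cong\tau_+^*\A^+$ for an Azumaya algebra Brauer-equivalent to the one defined by $\tau_+$, and crucially so that $R(\alpha_+)_*\A^+\cong\B_0$ (Proposition~\ref{azprop}~(3)). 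Your appeal to ``the relative spinor construction'' gestures in the right direction but does not carry this out, and the subsequent claim that the equivalence follows by ``a Fourier--Mukai argument parallel to Theorem~\ref{main2}'' does not match the paper: the paper proves directly that $R(\alpha_+)_*\colon\D(S^+,\A^+)\to\D(S,\B_0)$ is an equivalence by the projection formula plus a support argument on the exceptional curves $l_s$, an argument of the same shape as Proposition~\ref{nceqprop} rather than a Fourier--Mukai kernel computation. As written, your proposal leaves exactly the hard steps open.
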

}

When the quadric surface bundle $p\colon \Q\to S$ has {\it simple degeneration}, i.e., fibers of $p$ have corank at most $1$, or equivalently $S_2=\emptyset$, it is well-known that $\A_{\Q}$ is equivalent to a twisted derived category of the double cover $\wt{S}$ over $S$ ramified along $S_1$. Furthermore, the twist on $\wt{S}$ is closely related to the relative Hilbert scheme of lines $M$. More precisely, $\rho\colon M\to S$ factors as a smooth conic bundle $\tau\colon M\to \wt{S}$ followed by the double cover $\alpha\colon \wt{S}\to S$. The twist on $\wt{S}$ is given by the Azumaya algebra corresponding to $\tau$. This correspondence is a relative version of that between central simple algebras and Severi-Brauer varieties (Theorem 2.4.3, 5.2.1 in \cite{csa}).

When $p\colon \Q\to S$ has fibers of corank $2$, $\A_{\Q}$ becomes more complicated and this paper focuses on this case. Having fibers of corank $2$ gives two challenges. On one hand, the singular locus of a quadric surface $Q$ of corank $2$ over an algebraically closed field is not isolated (isomorphic to $\P^1$). This means that $\A_Q$ would be ``$1$-dimensional" because it {\it absorbs the singularity} of $Q$ (Its semiorthogonal complement in $\D(Q)$ is an exceltional collection.). On the other hand, the Hilbert scheme of lines on $Q$ is the union of two $\P^2$'s intersecting at a point, which is reducible and has higher dimension than cases for quadric surfaces of corank at most $1$. 

\begin{conj}\label{conj}
    If a quadric surface bundle $p\colon \Q\to S$ has simple degeneration generically and each fiber has corank at most $2$, i.e., $S_2\neq S$ and $S_3=\emptyset$, then $\A_{\Q}\cong \D(Y, \alpha_Y)$ where $(Y, \alpha_Y)$ is some twisted scheme.
\end{conj}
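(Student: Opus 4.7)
My strategy would be to combine the two established cases: reduce \'etale-locally to Case~1 via a local smooth section, then globalize by descent, producing the twist $\alpha_Y$ as the obstruction to the existence of a global section. \'Etale-locally on $S$, a flat quadric surface bundle with fibers of corank at most $2$ admits a smooth section, since a corank-$\le 2$ quadric surface has a smooth point over the separable closure of its base field, which spreads out \'etale-locally to a smooth section over a Zariski neighborhood. Theorem~\ref{main1} thus yields equivalences $\A_{\Q_{U_i}} \cong \D(\bar{\Q}_{U_i})$ over an \'etale cover $\{U_i \to S\}$, with the $\bar{\Q}_{U_i}$ the local hyperbolic reductions.

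The next step would be to identify a global twisted scheme $(Y,\alpha_Y)$ whose derived category these local pieces assemble into. The candidate is built from the relative Hilbert scheme of lines $\rho\colon M \to S$: over $S \setminus S_2$, the map $\rho$ factors as a smooth conic bundle $M \to \wt{S}$ composed with the discriminant double cover $\wt{S} \to S$ ramified along $S_1$; over $S_2$, the fiber of $\rho$ degenerates into two $\P^2$'s meeting at a point and $\wt{S}$ acquires singularities. Following the model of Theorem~\ref{main3}, I would construct $Y$ by blowing up along the preimage of $S_2$ and performing the small contraction separating the two components of the exceptional fiber, obtaining a finite map $Y \to S$ of generic degree $2$. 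The Brauer class $\alpha_Y$ extends the class of the smooth conic bundle $M \to \wt{S}$ over $S \setminus S_2$; a cocycle representative can be read off from the local equivalences by comparing the hyperbolic reductions produced by two sections $\sigma_i, \sigma_j$ on $U_i \times_S U_j$. Descent for $S$-linear semiorthogonal decompositions along $\{U_i \to S\}$ would then give the desired equivalence $\A_\Q \cong \D(Y, \alpha_Y)$.

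The hardest step will be analyzing the geometry over $S_2$ in the required generality. Over $S_2$ the fiber of $M \to S$ is reducible and non-normal, and the birational surgery defining $Y$ must be shown to produce a scheme (rather than an algebraic space) globally, with a Brauer class extending without removing any extra codimension-$\ge 2$ locus. In addition, descent of semiorthogonal decompositions in the presence of the corank-$2$ absorption-of-singularity phenomenon is delicate and likely requires additional hypotheses, such as smoothness of $\Q$ (in the spirit of Case~2) or an analogue of the resolution-of-singularities argument used there. Once the model $Y$ is in hand, the last statement of Theorem~\ref{main3} --- that $\alpha_Y$ is trivial if and only if $p$ admits a rational section --- should follow naturally from the cocycle description of $\alpha_Y$.
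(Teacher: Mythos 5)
Immediately after stating Conjecture~\ref{conj}, the authors write that they \emph{expect} the conjecture to be true because \'etale-locally $p$ has a smooth section, that they expect $Y$ to be a double cover over $S\setminus S_1$ and a $\P^1$-bundle over $S_2$, and then explicitly: ``So far we do not have a natural way to construct such $Y$ in general.'' Theorems~\ref{main1}, \ref{main2}, \ref{main3} settle the two special cases (global smooth section; $\Q$ smooth over a smooth surface), not the general case. So there is no ``paper's own proof'' to compare against; you have been asked to supply one, and what you have written is a plan, not a proof --- as you yourself acknowledge with phrases such as ``is delicate and likely requires additional hypotheses.''

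As a plan, your sketch reproduces the paper's heuristic and correctly pinpoints where the difficulty lies, but both of the steps you defer are exactly the open content. First, the construction of $Y$: your proposal is to follow the Case~2 surgery on the relative Hilbert scheme $M$ (blow up along one component of each reducible fiber and contract). The paper warns precisely against extrapolating this: the surgery in Section~\ref{surfbsec} ``relies on being able to choose one of the two $\P^2$'s for each $M_s$, $s\in S_2$,'' and hence ``works well when $S_2$ is a finite set of points, but it would not work in general.'' When $S_2$ is positive-dimensional, or when the two components of $M_s$ are exchanged by monodromy along $S_2$, there is no consistent global choice and no global blow-up/contraction; this is not a technicality but the reason the conjecture is open. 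Second, the descent step: gluing $S$-linear semiorthogonal decompositions along an \'etale cover, in a situation where $\A_{\Q}$ ``absorbs the singularity'' of the corank-$2$ fibers, is not a formal consequence of the \'etale-local equivalences $\A_{\Q_{U_i}}\cong\D(\bar{\Q}_{U_i})$; you would need to exhibit the comparison isomorphisms on overlaps and check a cocycle condition, and it is exactly this gluing datum that is supposed to produce the pair $(Y,\alpha_Y)$. Neither Theorem~\ref{main1} nor Theorem~\ref{main2} is stated with the naturality required to make this automatic. So the proposal correctly reconstructs the paper's motivation for believing the conjecture, but it does not close either of the gaps that make it a conjecture rather than a theorem.
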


We expect the conjecture to be true because \'{e}tale locally $p$ has a smooth section (Case 1). More specifically, we expect that $Y$ is a double cover over $S\backslash S_1$ and a $\P^1$-bundle over $S_2$. This is supported by Theorem \ref{main1}, \ref{main2} and Theorem \ref{main3} in the paper. So far we do not have a natural way to construct such $Y$ in general. But in the two cases considered in the paper, we can make such constructions. In the remaining part of the introduction, we will describe $Y$ more explicitly and give an overview of the techniques used in the paper.

In Case 1 where $p\colon \Q\to S$ is assumed to have a smooth section, $Y$ can be constructed as the hyperbolic reduction $\bar{\Q}$ with respect to the smooth section. The relative linear projection of $\Q$ from the smooth section identifies the blow-up of $\Q$ along the smooth section with a blow-up along $\bar{\Q}$; see the diagram (\ref{hrdiag}). The condition $S_2\neq S$ ensures that the blow-up center $\bar{Q}$ is a regular embedding of codimension $2$, and then we can apply the blow-up formula \cite[Theorem 6.11]{jquot-1}. Theorem \ref{main1} follows from performing mutations under this identification. This proof is straightforward, but it has the disadvantage that the information on the embedding functor $\A_{\Q}\to \D(\Q)$ is lost under mutations. This is why we provide a second proof in Theorem \ref{main2} and focus on working with $\A_{\Q}$ by itself. 

In the second proof of Case 1, we make use of the isomorphism from $\bar{\Q}$ to the scheme $Z$ over $S$ parametrizing lines in the fibers of $p\colon \Q \to S$ that intersect the smooth section. It has been shown in \cite{kuzqfib, abbqfib} that $\A_{\Q}$ is equivalent to $\D(S, \B_0)$, the derived category of coherent sheaves on $S$ with right $\B_0$-module structures, where $\B_0$ is the even Clifford algebra of $p$. We prove in Proposition \ref{nceqprop} that $\D(S,\B_0)\cong \D(Z, \sEnd(\I_0^{\R_Z}))$ where $\I_0^{\R_Z}$ is certain Clifford ideal introduced in Section \ref{clalid}. Finally, we use Morita equivalence to deduce $\A_{\Q}\cong \D(Z)$. The embedding functors $\D(Z)\to \D(\Q)$ can be described explicitly because each functor involved can be described so. This approach relies on the study of derived categories of non-commutative schemes and we provide the necessary foundations in Appendix \ref{ncschsec}.

In Case 2 where we assume $\Q$ is smooth and $S$ is a smooth surface, $Y$ can be constructed as the resolution $S^+$ of the double cover $\wt{S}$ over $S$. The idea is to make use of the relation between the relative Hilbert scheme of lines $M$ and the residual category $\A_{\Q}$ when $p\colon \Q \to S$ has simple degeneration. Although this relation fails for quadric surfaces of corank $2$, which means that $\A_{\Q}$ described in terms of the map $\tau\colon M\to \wt{S}$ is no longer a twisted derived category, we can fix this by making a modification to $\tau$. Namely, we will construct a smooth conic bundle $\tau_+\colon M^+\to S^+$ from $\tau$ such that $\A_{\Q}\cong \D(S^+, \A^+)$ where $\A^+$ is Brauer equivalent to the Azumaya algebra corresponding to $\tau_+$. This construction is motivated by the work \cite{kuzline}, in which the base $S$ is a smooth $3$-fold. We will not need the additional assumptions of \cite{kuzline} on the degeneration loci $S_k$. 

Now we give a more detailed description of the modification. We will assume $\kk$ is algebraically closed and $\cchar(\kk)=0$ so that we can perform birational transformations to $M$. In this case, there are only a finite number of fibers with corank $2$ by Lemma \ref{bealem}. As pointed out before, the fiber $M_s, s\in S_2$ is a union of two $\P^2$'s. For each $M_s$, we will blow up one of the $\P^2$'s and contract the exceptional locus onto $\P^1$. In this process, $M_s$ becomes a Hirzebruch surface $M^+_s$, and $M^+\to \wt{S}$ factor as a smooth conic bundle $\tau_+\colon M^+\to S^+$ followed by the resolution $S^+\to \wt{S}$. Details of this process are given in Proposition \ref{birtran}. It should be pointed out that this geometric construction relies on being able to choose one of the two $\P^2$'s for each $M_s, s\in S_2$. Hence, it works well when $S_2$ is a finite set of points, but it would not work in general.

The main results of the paper can be used to reprove results on semiorthogonal decompositions for the nodal quintic del Pezzo threefolds (Example \ref{nodaldp5}) given in \cite{xienodaldp5} and cubic $4$-folds containing a plane in non-generic cases (Example \ref{cubic4}). Since the residual category of a Fano complete intersection of quadrics is equivalent to that of the associated net of quadrics by the Homological Projective Duality theory, we can produce new examples of Fano complete intersections of quadrics whose residual categories are twisted geometric. For example, residual categories of smooth complete intersections of three quadrics in $\P^{2m+3}$ for $m\leqslant 5$ are twisted geometric; see Proposition \ref{ci3q}.

\medskip
\noindent{\bf Convention.}
Throughout the paper, we assume that $\kk$ is a field with $\cchar(\kk)\neq 2$ and $S$ is an integral noetherian scheme over $\kk$ unless specified otherwise. In Section \ref{surfbsec} (Case 2), we assume that $\kk$ is algebraically closed, and starting from Proposition \ref{birtran} till the end of the section, we assume additionally $\cchar{\kk}=0$.

\medskip
\noindent{\bf Acknowledgements.}
I would like to thank Arend Bayer, Qingyuan Jiang for numerous helpful conversations, and thank Alexander Kuznetsov for pointing out the reference \cite{moc8}. I would also like to thank the referee for a very careful reading of the paper and for many useful suggestions. The author is supported by the ERC Consolidator grant WallCrossAG, no. 819864.
%-------------------------------------------------
\section{Quadric bundles, Clifford algebras and ideals}\label{prisec}
In this section, we recall some notions of quadric bundles, review definitions of Clifford algebras and introduce Clifford ideals. 

\subsection{Some basic notions of quadric bundles}\label{qbnotion}
Assume that $\E$ is a vector bundle and $\L$ is a line bundle on $S$. We say $q\colon \E\to \L$ is a \textit{(line-bundle valued) quadratic form} on $S$ if $q$ is an $\O_S$-homogeneous morphism of degree $2$ such that the associated morphism $b_q\colon \E\times \E\to \L$ defined by $b_q(v,w)=q(v+w)-q(v)-q(w)$ is a symmetric bilinear form. The \textit{rank} of $q\colon \E\to\L$ is the rank of $\E$.

Assume $q$ is non-zero. Let $\pi\colon \P_S(\E) \to S$ be the projection map. Then $q$ corresponds to a non-zero section
\begin{equation}\label{qsec}
    s_q\in \Gamma(\P_S(\E), \O_{\P_S(\E)/S}(2)\otimes \pi^*\L) \cong \Gamma(S, \Sym^2(\E^\vee)\otimes \L),
\end{equation}
where $\E^\vee$ is the dual of $\E$ and $\Sym^2$ is the second symmetric product. Let $\Q \subset \P_S(\E)$ be the zero locus of the section $s_q$. We write $\Q =\{q=0\}$ and call $p\colon \Q\to S$ the associated \textit{quadric bundle}. We say that $q\colon \E\to \L$ is \textit{primitive} if $q$ is non-zero over the residue field of every point on $S$. This is equivalent to $p\colon \Q\to S$ being a flat quadric bundle. In this paper, when we consider quadric bundles, we only require that $\Q \subset \P_S(\E)$ has codimension $1$ and thus $p\colon \Q\to S$ may not be flat.

Denote the \textit{$k$-th degeneration locus} of $p\colon \Q\to S$ by $S_k \subset S$, which is the closed subscheme of $S$ defined by the sheaf of ideal
\begin{equation*}
    \IIm(\Lambda^{n+1-k}\E \otimes \Lambda^{n+1-k} \E \otimes (\L^\vee)^{n+1-k} \xrightarrow{\Lambda^{n+1-k} b_q} \O_S),
\end{equation*}
where $n=\rank(\E)$. This means that $S_k$ is the locus where fibers of $p$ have corank at least $k$. The \textit{corank} of a quadric is the corank of its associated symmetric bilinear form. In particular, $S_1 \cong \{\det(b_q)=0\}$ is the locus of singular fibers. We say that $p\colon \Q \to S$ has \textit{simple degeneration} if $S_2=\emptyset$.

A subbundle $\cW$ of $q\colon \E\to\L$ is isotropic if $q|_{\cW}=0$. This is equivalent to $\P(\cW)\subset \Q$.
\begin{defn}\label{regisodef}
An isotropic subbundle $\cW$ of $q\colon \E\to\L$ is called \textit{regular} if for each geometric point $x\in S$, the fiber $\P(\cW)_x$ over $x$ is contained in the smooth locus of the fiber $\Q_x$. We call $\P_S(\cW)$ a \textit{smooth $r$-section} of $p\colon\Q\to S$ if $\cW$ is a regular isotropic subbundle of $q$ of rank $r+1$. A smooth $0$-section is simply called a \textit{smooth section}. 
\end{defn}

Let $\cW$ be a regular isotropic subbundle of $q\colon \E\to\L$. Then there is an exact sequence
\begin{equation*}
    0\to \cW^\perp \to \E \xrightarrow{b_q|_{\E\times \cW}} \sHom(\cW, \L) \to 0,
\end{equation*}
where $\cW^\perp$ is the kernel of $b_q|_{\E\times \cW}$. Since $\cW$ is isotropic, we have $\cW \subset \cW^\perp$ and $\cW$ is contained in the kernel of $q|_{\cW^\perp}\colon \cW^\perp \to \L$. It induces a new quadratic form $\bar{q}\colon \cW^\perp/\cW \to \L$.
\begin{defn}\label{hypreddef}
Denote $\bar{\E}=\cW^\perp/\cW$. The induced quadratic from $\bar{q}\colon \bar{\E} \to \L$ is called the \textit{hyperbolic reduction} of $q\colon \E \to \L$ with respect to the regular isotropic subbundle $\cW$. Alternatively, $\bar{\Q}=\{\bar{q}=0\}$ is called the \textit{hyperbolic reduction} of $\Q=\{q=0\}$ with respect to the smooth $r$-section $\P_S(\cW)$, where $r=\rank(\cW)-1$.
\end{defn}
The quadric bundles $\Q\to S$ and $\bar{\Q}\to S$ related by the hyperbolic reduction share many features. For example, they have the same degeneration loci $S_k$.
%--------------------------------------------------------------------------
\subsection{Clifford algebras and ideals}\label{clalid}
Let $q\colon\E\to\L$ be a non-zero quadratic form and let $p\colon \Q=\{q=0\} \to S$ be the associated quadric bundle. There are several equivalent definitions for even Clifford algebras and Clifford bimodules (also called the odd part of Clifford algebras) of $q$; see \cite[\S3]{bkqf}, \cite[\S1.5]{abbqfib}, \cite[\S3.3]{kuzqfib}. We will recall the construction in \cite{bkqf}.

Set the degrees of elements of $\E$ and $\L$ to be $1$ and $2$, respectively. The \textit{generalized Clifford algebra} is defined by
\begin{equation}
    \B= T(\E)\otimes (\bigoplus_{n\in \ZZ} \L^n)/\langle (v\otimes v)\otimes 1- 1\otimes q(v)\rangle_{v\in \E},
\end{equation}
where $T(\E)$ is the tensor algebra of $\E$. Let $\B_n$ be the subgroups of $\B$ consisting of elements of degree $n\in\ZZ$. Then $\B \cong \bigoplus_{n\in\ZZ} \B_n$  is a $\ZZ$-graded algebra over $\B_0$. The \textit{even Clifford algebra} and the \textit{Clifford bimodule} are defined to be $\B_0$ and $\B_1$, respectively. 

Write $\rank(\E)=2m$ or $2m+1$. Then there are $\O_S$-filtrations
\begin{equation}\label{b01fil}
    \begin{split}
    & \O_S=F_0\subset F_2\subset \dots \subset F_{2m}=\B_0,\\
    & \E= F_1\subset F_3\subset \dots \subset F_{2m+1} =\B_1\\
\end{split}
\end{equation}
such that $F_{2i}/F_{2i-2} \cong \Lambda^{2i}\E \otimes (\L^\vee)^i$ and $F_{2i+1}/F_{2i-1}\cong \Lambda^{2i+1} \E \otimes (\L^\vee)^i$. Moreover, we have
\begin{equation}\label{bn}
    \B_n\cong \left\{
    \begin{array}{ll}
    \B_0\otimes \L^k, & n=2k\\
    \B_1\otimes \L^k, & n=2k+1\\   
    \end{array}
\right..
\end{equation}
When $q$ is primitive, the Clifford multiplications
\begin{equation}\label{primiso}
    \B_n\otimes_{\B_0}\B_m\xrightarrow{\cong} \B_{n+m}
\end{equation}
are isomorphisms for $n,m\in\ZZ$ and $\B_1$ is an invertible $\B_0$-bimodule. 

Note that if $\cW$ is an isotropic subbundle, then $\bigoplus_n \Lambda^n \cW$ is a subalgebra of $\B$. We define the associated Clifford ideals below.

\begin{defn}\label{clideal}
The \textit{$n$-th (left) Clifford ideal} $\I_n^{\cW}$ of $q\colon\E\to \L$ with respect to an isotropic subbundle $\cW$ is the degree $n$ part of the left principal ideal of $\B$ generated by $\det\cW\subset \B_{\rank(\cW)}$, i.e.,
\begin{equation*}
   \I_n^{\cW} =\IIm(\B_{n-\rank(\cW)}\otimes \det\cW \to \B_n),
\end{equation*}
where the map is given by Clifford multiplications. Similarly, we can define the \textit{$n$-th right Clifford ideal}
\begin{equation*}
    \I_n^{\circ \cW}= \IIm(\det\cW\otimes \B_{n-\rank(\cW)} \to \B_n).
\end{equation*}
\end{defn}
We have $\I_n^{\cW}\cong\I_n^{\circ \cW}$ as vector bundles on $S$ because tensor is commutative and $\I_{n+2}^\cW \cong \I_n^\cW\otimes \L$ by relations (\ref{bn}).

The Clifford ideals play an important role in the study of quadric bundles. They have been studied for a quadric hypersurface and were used to define spinor sheaves for an isotropic subspace in \cite[\S2]{addspinor}. In this paper, we will provide a relative version. 

Firstly, we provide two lemmas about Clifford ideals that will be used later. Next, we give the relation between Clifford ideals of different isotropic subbundles and describe the dual of Clifford ideals. As with the quadric hypersurface case, we can define spinor sheaves for an isotropic subbundle, and we will discuss properties of spinor sheaves that follow from those of Clifford ideals. We primarily work with left Clifford ideals and properties proved for them also apply to right Clifford ideals. 

\begin{lemma}\label{mulisolem}
Assume that $q\colon \E\to \L$ is primitive and $\cW$ is an isotropic subbundle. Then for all $m,n\in\ZZ$,

(1) the Clifford multiplication induces a left $\B_0$-module isomorphism
\begin{equation*}
    \sigma_{m,n} \colon \B_m\otimes_{\B_0} \I_n^{\cW} \xrightarrow{\cong} \I_{m+n}^{\cW};
\end{equation*}

(2) there are isomorphisms of sheaves of algebras  $\sEnd(\I_n^{\cW})\cong \sEnd(\I_{m+n}^{\cW})$.

A similar result is true for right Clifford ideals.
\end{lemma}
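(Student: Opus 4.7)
The plan is to deduce part (1) directly from the definition of the Clifford ideal together with the invertibility of $\B_m$ as a $\B_0$-bimodule provided by (\ref{primiso}), and then to obtain part (2) as a formal Morita-type consequence of (1).

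For part (1), set $r=\rank(\cW)$. By Definition \ref{clideal}, $\I_n^\cW$ is the image of the Clifford multiplication map $\mu_n\colon \B_{n-r}\otimes_{\O_S}\det\cW \to \B_n$. Because $q$ is primitive, (\ref{primiso}) tells us that $\B_1$, and hence each $\B_m$, is an invertible $\B_0$-bimodule; in particular $\B_m$ is flat on both sides, so the functor $\B_m\otimes_{\B_0}(-)$ is exact on left $\B_0$-modules and therefore preserves images. Applying it to $\mu_n$ and invoking the Clifford multiplication isomorphism $\B_m\otimes_{\B_0}\B_k\xrightarrow{\cong}\B_{m+k}$ of (\ref{primiso}), I identify the image of the resulting map with the image of $\mu_{m+n}\colon \B_{m+n-r}\otimes\det\cW\to\B_{m+n}$, which is $\I_{m+n}^\cW$ by definition. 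Associativity of the multiplication in $\B$ then ensures that this identification is precisely the map $\sigma_{m,n}$, completing (1).

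For part (2), the invertibility of $\B_m$ as a $\B_0$-bimodule makes $\B_m\otimes_{\B_0}(-)$ an auto-equivalence of the category of left $\B_0$-modules, with quasi-inverse $\B_{-m}\otimes_{\B_0}(-)$. By (1), this auto-equivalence sends $\I_n^\cW$ to $\I_{m+n}^\cW$, and any auto-equivalence of an additive category induces an isomorphism of endomorphism algebras on corresponding objects. Sheafifying this observation yields the desired isomorphism $\sEnd(\I_n^\cW)\cong\sEnd(\I_{m+n}^\cW)$ of sheaves of algebras on $S$. The analogous statement for right Clifford ideals follows by interchanging the roles of left and right $\B_0$-modules.

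The only point requiring real care is justifying that $\B_m\otimes_{\B_0}(-)$ is exact and commutes with taking images, i.e.\ the bi-flatness implicit in the invertibility of $\B_m$. Once that is in place, both claims reduce essentially to tracking images through a commutative diagram, so I do not anticipate a genuine obstacle.
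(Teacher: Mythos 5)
Your proposal is correct and rests on the same key input as the paper's own proof, namely the invertibility of $\B_m$ as a $\B_0$-bimodule coming from the primitive case of (\ref{primiso}). The only presentational difference is in how you conclude that $\sigma_{m,n}$ is bijective: the paper constructs the inverse map explicitly by applying $\B_m\otimes_{\B_0}(-)$ to $\sigma_{-m,m+n}$ and invoking $\B_m\otimes_{\B_0}\B_{-m}\cong\B_0$, whereas you instead note that $\B_m\otimes_{\B_0}(-)$ is exact (being an equivalence, hence flat on both sides), so it carries $\I_n^\cW=\IIm(\mu_n)$ to the image of $\B_m\otimes_{\B_0}\mu_n$, which by associativity is identified with $\mu_{m+n}$. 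These are two ways of phrasing the same observation, and your version has the small advantage of making the flatness point explicit; the paper's has the advantage of directly exhibiting the inverse without appealing to preservation of images. Part (2) is handled identically in both: the auto-equivalence $\B_m\otimes_{\B_0}(-)$ transports $\I_n^\cW$ to $\I_{m+n}^\cW$ and hence identifies their endomorphism algebras.
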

\begin{proof}
(1) We have the map $\sigma_{m,n}$ because the image of the Clifford multiplication $\B_m\otimes \I_n^{\cW} \to \B_{m+n}$ is $\I_{m+n}^{\cW}$ and it factors through $\B_m\otimes_{\B_0} \I_n^{\cW}$. Applying $\B_m\otimes_{\B_0} -$ to the map 
\begin{equation*}
    \sigma_{-m,m+n}\colon \B_{-m}\otimes_{\B_0} \I_{m+n}^{\cW} \to \I_n^{\cW}    
\end{equation*}
and using the isomorphism (\ref{primiso}), we obtain the map $\I_{m+n}^{\cW}\to \B_m\otimes_{\B_0} \I_n^{\cW}$. This is the inverse map of $\sigma_{m,n}$.

(2) is a consequence of (1) where the morphism $\sEnd(\I_n^{\cW})\to \sEnd(\I_{m+n}^{\cW})$ is induced by $\B_m\otimes_{\B_0} -$.
\end{proof}

\begin{lemma}\label{coklem}
Let $\cW$ be an isotropic subbundle of rank $r$. Then for all $n\in \ZZ$ there are exact sequences
\begin{equation*}
    \B_{n-1}\otimes \cW \to \B_n \to \I_{n+r}^{\cW}\otimes \det(\cW^\vee) \to 0
\end{equation*}
of left $\B_0$-modules, where the first map is given by the Clifford multiplication $\B_{n-1}\otimes \cW \subset \B_{n-1}\otimes \B_1 \to \B_n$. A similar result is true for right Clifford ideals.
\end{lemma}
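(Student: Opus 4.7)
The plan is to construct the map $\B_n \to \I_{n+r}^{\cW} \otimes \det(\cW^\vee)$ as right Clifford multiplication by a local generator of $\det \cW$ (viewed in $\B_r$ via the inclusion $\det \cW \subset \Lambda^r \cW \subset \B_r$), and then deduce exactness by a filtration argument. By Definition \ref{clideal}, the image of $\B_n \otimes \det \cW \to \B_{n+r}$ is exactly $\I_{n+r}^{\cW}$, so the induced map $\B_n \twoheadrightarrow \I_{n+r}^{\cW} \otimes \det(\cW^\vee)$ is surjective. The composition with the Clifford multiplication $\B_{n-1} \otimes \cW \to \B_n$ vanishes: locally choose a basis $e_1, \dots, e_r$ of $\cW$ and set $\omega = e_1 \cdots e_r \in \B_r$; the isotropy of $\cW$ forces $e_i e_j + e_j e_i = b_q(e_i, e_j) = 0$ and $e_i^2 = q(e_i) = 0$, so $e_i \cdot \omega = 0$ for every $i$ and hence $w \cdot \omega = 0$ for any local section $w$ of $\cW$.

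For exactness at $\B_n$, I would pass to the associated graded with respect to the filtration (\ref{b01fil}), extended to all $\B_n$ via the shift isomorphisms (\ref{bn}). Clifford multiplication preserves the filtration ($F_i \cdot F_j \subset F_{i+j}$), and on the associated graded it becomes exterior multiplication on $\Lambda^\bullet \E$. The graded sequence therefore reads
\begin{equation*}
    \Lambda^{j-1}\E \otimes \cW \xrightarrow{\wedge} \Lambda^j \E \xrightarrow{\wedge \det \cW} \Lambda^{j+r} \E,
\end{equation*}
up to the appropriate $\L$-twists. Locally choosing a splitting $\E = \cW \oplus \cW'$, the decomposition $\Lambda^j \E = \bigoplus_{a+b=j} \Lambda^a \cW \otimes \Lambda^b \cW'$ shows that the image of the first map is $\bigoplus_{a \geq 1} \Lambda^a \cW \otimes \Lambda^b \cW'$, which is exactly the kernel of the second, and the image of the second is $\det \cW \otimes \Lambda^j \cW' \cong \det \cW \otimes \Lambda^j(\E/\cW)$. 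Hence the graded sequence is exact, and its cokernel matches the associated graded of $\I_{n+r}^{\cW} \otimes \det(\cW^\vee)$.

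Since the filtration on $\B_n$ is bounded, exactness on the associated graded upgrades to exactness of the original sequence; combined with the surjectivity and zero composition already established, this gives the lemma. The argument for right Clifford ideals is entirely symmetric, swapping the roles of left and right multiplication throughout. The main technical hurdle will be the bookkeeping involved in extending the filtration (\ref{b01fil}) to $\B_n$ for general $n$ via (\ref{bn}) and tracking the shifts in $\L$-twists under Clifford multiplication; once this is set up, the remaining exterior-algebra calculation is routine.
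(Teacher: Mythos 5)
Your proof is correct and takes essentially the same approach as the paper: construct the surjection $\B_n \twoheadrightarrow \I_{n+r}^{\cW}\otimes\det(\cW^\vee)$ by right Clifford multiplication by $\det\cW$ and identify its kernel with the image of $\B_{n-1}\otimes\cW$. The paper dispatches the kernel identification with the phrase ``by construction''; your passage to the associated graded of the filtration (\ref{b01fil}), where Clifford multiplication becomes exterior multiplication and the exactness reduces to the elementary computation with a local splitting $\E=\cW\oplus\cW'$, is a careful way to fill in exactly that step.
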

\begin{proof}
There is a left $\B_0$-module surjection $\B_n\twoheadrightarrow \I_{n+r}^{\cW}\otimes \det(\cW^\vee)$ induced by the multiplication $\B_n\otimes \det(\cW)\subset \B_n\otimes \B_r\to \B_{n+r}$. By construction, the kernel of the surjection is given by the image of $\B_{n-1}\otimes \cW \to \B_n$.
\end{proof}

\begin{lemma}\label{2clideal}
Let $\cW'\subset \cW$ be isotropic subbundles and assume $\rank(\cW)-\rank(\cW')=1$. Let $\L_1=\cW/\cW'$. Then for all $n\in \ZZ$ there are short exact sequences
\begin{equation*}
    0\to \I_n^{\cW}\to \I_n^{\cW'}\to \I_{n+1}^{\cW}\otimes \L_1^\vee\to 0
\end{equation*}
of left $\B_0$-modules. Similarly, if $\cW$ is an isotropic sub line bundle, then for all $n\in \ZZ$ there are short exact sequences
\begin{equation*}
    0\to \I_n^{\cW}\to \B_n\to \I_{n+1}^{\cW}\otimes \cW^\vee\to 0.
\end{equation*}
A similar result is true for right Clifford ideals.
\end{lemma}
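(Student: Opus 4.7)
The plan is to derive the sequence by combining two applications of Lemma~\ref{coklem}---to $\cW$ and to $\cW'$---with a diagram chase on their cokernels. The second statement is just the specialization $\cW'=0$ of the first (for then $\I_n^{\cW'}=\B_n$ and $\L_1=\cW$), so I focus on the first; set $r=\rank(\cW)$ and $r'=r-1=\rank(\cW')$.

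Applying Lemma~\ref{coklem} at starting degree $n$ produces the isomorphisms $\B_n/(\B_{n-1}\cdot\cW')\cong \I_{n+r'}^{\cW'}\otimes(\det\cW')^\vee$ and $\B_n/(\B_{n-1}\cdot\cW)\cong\I_{n+r}^\cW\otimes(\det\cW)^\vee$. Since $\B_{n-1}\cdot\cW'\subset\B_{n-1}\cdot\cW$ inside $\B_n$, the induced map on cokernels fits into a short exact sequence
\[
0\to(\B_{n-1}\cdot\cW)/(\B_{n-1}\cdot\cW')\to \I_{n+r'}^{\cW'}\otimes(\det\cW')^\vee\to \I_{n+r}^\cW\otimes(\det\cW)^\vee\to 0.
\]
The key step is identifying the leftmost term. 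It equals the image of $\B_{n-1}\otimes\L_1\to\B_n/(\B_{n-1}\cdot\cW')$, which, under the isomorphism from Lemma~\ref{coklem}, sends $x\otimes\bar v\mapsto\pm(x\cdot\det\cW)\otimes(\det\cW')^\vee$ for any local lift $v\in\cW$ of $\bar v\in\L_1$. This uses the identity $v\cdot\det\cW'=\pm\det\cW$ inside $\B_r$, which holds because Clifford multiplication of sections of the isotropic subbundle $\cW$ coincides up to sign with the exterior product. The image therefore equals $\B_{n-1}\cdot\det\cW\otimes(\det\cW')^\vee=\I_{n+r'}^\cW\otimes(\det\cW')^\vee$ (using $\I_{n+r'}^\cW=\B_{n-1}\cdot\det\cW$), sitting inside $\I_{n+r'}^{\cW'}\otimes(\det\cW')^\vee$ via the natural inclusion $\I_{n+r'}^\cW\hookrightarrow\I_{n+r'}^{\cW'}$.

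Substituting this kernel description, tensoring throughout by $\det\cW'$, and simplifying with $(\det\cW)^\vee\otimes\det\cW'=\L_1^\vee$ (from $\det\cW=\det\cW'\otimes\L_1$), then renaming $m=n+r'$ (so $n+r=m+1$), produces the claimed short exact sequence. The principal obstacle is the kernel identification in the middle paragraph: one must carefully verify that the image of $\B_{n-1}\otimes\L_1$ in $\I_{n+r'}^{\cW'}\otimes(\det\cW')^\vee$ is precisely the subsheaf $\I_{n+r'}^\cW\otimes(\det\cW')^\vee$ under the natural inclusion, and this boils down to the Clifford identity $v\cdot\det\cW'=\pm\det\cW$ together with careful bookkeeping of the twists produced by Lemma~\ref{coklem}.
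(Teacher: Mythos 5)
Your proof is correct, and it takes a somewhat different route than the paper's. The paper argues directly: it considers right Clifford multiplication $\I_n^{\cW'}\otimes\cW\subset\B_n\otimes\B_1\to\B_{n+1}$, observes that the image is $\I_{n+1}^{\cW}$ and that the map factors through $\I_n^{\cW'}\otimes\L_1$, and then simply asserts that the kernel of the resulting surjection $\I_n^{\cW'}\otimes\L_1\twoheadrightarrow\I_{n+1}^{\cW}$ is $\I_n^{\cW}\otimes\L_1$. Your argument instead packages the same Clifford computation through two applications of Lemma~\ref{coklem} combined with the third isomorphism theorem applied to the nested submodules $\B_{n-1}\cdot\cW'\subset\B_{n-1}\cdot\cW\subset\B_n$. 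This has a small structural advantage: the exactness of your sequence is automatic (the kernel of the cokernel map is $(\B_{n-1}\cdot\cW)/(\B_{n-1}\cdot\cW')$ by definition), so the only thing left to do is an explicit image computation, which you carry out correctly via the identity $v\cdot\det\cW'=\pm\det\cW$; this is the same Clifford identity that powers the paper's image computation. In contrast, the paper's kernel identification is the one step it leaves implicit (containment $\I_n^{\cW}\otimes\L_1\subset\ker$ is easy since $\det\cW\cdot\cW=0$, but equality requires a local rank count or an argument like yours). Your approach is longer but slightly more self-contained; the paper's is shorter but relies on the reader supplying the kernel verification. Both are valid, and after the index shift $m=n+r'$ your sequence agrees with the statement.
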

\begin{proof}
Consider the map given by the Clifford multiplication
\begin{equation*}
    \I_n^{\cW'}\otimes \cW \subset \B_n \otimes \B_1 \to \B_{n+1}.
\end{equation*}
The image is $\I_{n+1}^\cW$ and it factors through $\I_n^{\cW'}\otimes (\cW/\cW')$. Furthermore, the induced map
\begin{equation*}
    \I_n^{\cW'}\otimes \L_1 \twoheadrightarrow \I_{n+1}^\cW
\end{equation*}
 has kernel $\I_n^\cW \otimes \L_1$. The proof when $\rank(\cW)=1$ is similar.
\end{proof}

\begin{lemma}\label{clidealdual}
Let $\cW$ be an isotropic subbundle of rank $r$.

(1) If $\rank(\E)=2m$, then for $k\in\ZZ$ there are right $\B_0$-module isomorphisms
\begin{equation*}
    (\I_k^\cW)^\vee \cong \I_{r-k}^{\circ \cW}\otimes \det(\cW^\vee)\otimes \det(\E^\vee)\otimes \L^m.
\end{equation*}

(2) If $\rank(\E)=2m+1$, then for $k\in\ZZ$ there are right $\B_0$-module isomorphisms
\begin{equation*}
    (\I_k^\cW)^\vee \cong \I_{r+1-k}^{\circ \cW}\otimes \det(\cW^\vee)\otimes \det(\E^\vee)\otimes \L^m.
\end{equation*}
\end{lemma}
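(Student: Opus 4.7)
The plan is to combine a self-duality of the Clifford algebra $\B$ with the short exact sequence of Lemma \ref{coklem}, and then rewrite the resulting index via the periodicity $\B_{n+2} \cong \B_n \otimes \L$ from (\ref{bn}). Write $N = \rank\E$, so $N = 2m$ in case (1) and $N = 2m+1$ in case (2).

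The first task is to produce a non-degenerate pairing $\B_k \otimes \B_{N-k} \to \det\E$ defined by Clifford multiplication $\B_k \otimes \B_{N-k} \to \B_N$ followed by the projection onto the top graded piece of the filtration (\ref{b01fil}); after absorbing the $\L$-twist in (\ref{bn}), this top piece is $\det\E$ in both parity cases for $N$. Non-degeneracy is a local computation: in a trivializing basis $v_1,\dots,v_N$ of $\E$, the products $v_I \cdot v_J$ of monomials hit the top piece exactly when $J = I^c$, in which case they equal $\pm v_1\cdots v_N$. This yields an isomorphism $\B_k^\vee \cong \B_{N-k} \otimes \det\E^\vee$ of right $\B_0$-modules, and by the graded symmetry of this Frobenius pairing the dual of the ``right multiplication by $\cW$'' map $\B_m \otimes \cW \to \B_{m+1}$ is identified (up to sign, which is irrelevant for the isomorphism class) with the ``left multiplication by $\cW$'' map $\cW \otimes \B_{N-m-1} \to \B_{N-m}$.

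The second ingredient is the identification of the left annihilator
\[ \{a \in \B_j : \cW \cdot a = 0\} = \I_j^{\circ\cW} \subset \B_j. \]
The inclusion $\I_j^{\circ\cW} \subseteq$ left annihilator is immediate from $v\omega = 0$ for $v \in \cW$ and $\omega \in \det\cW$, since $\omega$ already contains every element of $\cW$ in its wedge. The reverse inclusion is a classical fact for Clifford algebras, and can be verified locally on a hyperbolic trivialization, or alternatively by a rank comparison using the self-duality established above.

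With these in hand, the lemma follows by applying Lemma \ref{coklem} with $n = k-r$, verifying injectivity of the first map using the isomorphism $\B_{k-r-1} \otimes_{\B_0} \B_1 \cong \B_{k-r}$ from (\ref{primiso}), and dualizing the resulting short exact sequence. Substituting the self-duality and transporting the multiplication-by-$\cW$ map as above gives
\[ (\I_k^\cW)^\vee \otimes \det\cW \otimes \det\E \cong \ker\bigl(\B_{N-k+r} \to \B_{N-k+r+1} \otimes \cW^\vee\bigr), \]
where the arrow on the right is the adjoint of left multiplication by $\cW$. Hence the kernel equals the left annihilator of $\cW$ in $\B_{N-k+r}$, which by the previous step is $\I_{N-k+r}^{\circ\cW}$. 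Finally, the periodicity $\I_{j+2}^{\circ\cW} \cong \I_j^{\circ\cW} \otimes \L$ reduces the index $N-k+r$ to $r-k$ in case (1) and to $r+1-k$ in case (2), picking up a factor of $\L^m$ and producing the stated formulas. I expect the main obstacle to be the identification of the left annihilator of $\cW$ with $\I_j^{\circ\cW}$: this is standard over a field but in the relative setting requires a careful local argument on a hyperbolic model, and the graded symmetry of the Clifford trace used in the first step similarly warrants local verification.
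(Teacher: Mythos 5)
Your plan is essentially the same one the paper uses: pair against the top graded piece of the filtration, combine with Lemma~\ref{coklem}, and compare ranks fiberwise. The paper phrases it a bit differently (it restricts the pairing $\B_{-k}\otimes\B_k\to\det\E\otimes(\L^\vee)^m$ to $\I_k^\cW$ to get a map $f$, applies coklem to get a surjection $g$, notes $\ker f=\ker g$, and concludes by rank comparison), while you package the pairing as a self-duality $\B_k^\vee\cong\B_{N-k}\otimes\det\E^\vee$, dualize coklem, and identify the kernel via a left-annihilator computation. The paper's key unproved assertion is $\ker f=\ker g$; yours is $\{a:\cW\cdot a=0\}=\I^{\circ\cW}_j$. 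These are mirror images of each other under the Clifford anti-involution, and both boil down to a rank count plus the non-degeneracy of the trace form, so you are on equal footing with the paper in what you leave to local verification. Your worry that the left-annihilator identification is the main obstacle is accurate, and it is exactly the point the paper also handles by the terse rank-and-fiber argument.

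One genuine error: the first arrow $\B_{k-r-1}\otimes\cW\to\B_{k-r}$ in Lemma~\ref{coklem} is \emph{not} injective, and the isomorphism $\B_{k-r-1}\otimes_{\B_0}\B_1\cong\B_{k-r}$ from (\ref{primiso}) does not help you here, because coklem's map tensors over $\O_S$, not over $\B_0$. Already for $r=1$ and $\E$ of rank $2$ with a hyperbolic form and $\cW=\langle e_1\rangle$, the element $(1-e_1e_2)\otimes e_1\in\B_0\otimes\cW$ maps to $e_1-e_1=0$, so the kernel is nontrivial. Fortunately, the injectivity is irrelevant: dualizing the right-exact sequence $A\to B\to C\to 0$ already yields $0\to C^\vee\to B^\vee\to A^\vee$, which is all you use. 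Likewise, the ``graded symmetry'' you invoke to identify the transpose of right multiplication with left multiplication is not needed; with the pairing taken as $(a,b)\mapsto\tr(ab)$ in both degrees, the transpose of $\xi\mapsto\xi w$ is literally $b\mapsto wb$ with no sign. You should delete the injectivity step and the graded-symmetry appeal; the rest of your argument then reproduces the lemma, and the remaining local checks (non-degeneracy of the trace pairing and the annihilator identity) are of the same nature as what the paper itself leaves to a fiberwise verification.
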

\begin{proof}
(1) Let $\tr\colon \B_0\to \det(\E)\otimes (\L^\vee)^m$ be the map $F_{2m}\to F_{2m}/F_{2m-2}$ induced by the filtration (\ref{b01fil}). For $k\in\ZZ$, there is a pairing
\begin{equation*}
    \begin{array}{rcccl}
    \B_{-k}\otimes \B_{k} &\to & \B_0 & \xrightarrow{\tr} & \det(\E)\otimes (\L^\vee)^m,\\
    \xi \otimes \eta & \mapsto & \xi\eta & \mapsto & \tr(\xi\eta).
    \end{array}
\end{equation*}
When the pairing is restricted to $\B_{-k}\otimes \I_k^\cW$, it induces
\begin{equation*}
    f\colon \B_{-k} \to \sHom(\I_k^\cW, \det(\E)\otimes (\L^\vee)^m).
\end{equation*}
It is clear from the construction that $f$ is a homomorphism of right $\B_0$-modules. On the other hand, by Lemma \ref{coklem}, we get a right $\B_0$-module surjection 
\begin{equation*}
    g\colon \B_{-k}\twoheadrightarrow \det(\cW^\vee)\otimes \I_{r-k}^{\circ \cW}.
\end{equation*}
Since $f, g$ have the same kernel,  we have an injection
\begin{equation*}
    \bar{f}\colon \det(\cW^\vee)\otimes \I_{r-k}^{\circ \cW} \to \sHom(\I_k^\cW, \det(\E)\otimes (\L^\vee)^m).
\end{equation*}
Note both the vector bundles above have rank $2^{\rank(\E)-r-1}$. Then $\bar{f}$ is an isomorphism because it is so over the residue field of every point on $S$.

(2) The proof when $\rank(\E)$ is odd is similar. The only differences are that we should use $\tr\colon \B_1\to \det(\E)\otimes (\L^\vee)^m$ and the pairing $\B_{1-k}\otimes \B_{k}\to \det(\E)\otimes (\L^\vee)^m$ instead.
\end{proof}

Now we define spinor sheaves on quadric bundles by means of Clifford ideals. Let $\pi\colon \P_S(\E)\to S$ be the projection map. Regard $\O_{\P_S(\E)/S}(-1)$ as the universal sub line bundle and consider the map
\begin{equation}\label{phi_n}
    \begin{array}{rcl}
    \O_{\P_S(\E)/S}(-1)\otimes \pi^*\I_{n-1}^\cW & \xrightarrow{\phi_n} & \pi^* \I_n^\cW,\\
    v\otimes \xi & \mapsto & v \xi.
    \end{array} 
\end{equation}
Then $\phi_n\circ \phi_{n-1}=q$. Taking into account that $S$ is integral and $q\neq 0$, we have $\phi_n$ is an isomorphism outside of $\Q=\{q=0\}$ and $\phi_n$ is injective.

\begin{defn}\label{spinordef}
The $n$-th \textit{spinor sheaf} $\cS_n^\cW$ on $\Q=\{q=0\}$ of the non-zero quadratic form $q\colon\E\to\L$ with respect to an isotropic subbundle $\cW$ is defined by the exact sequence
\begin{equation*}
    0\to \O_{\P_S(\E)/S}(-1)\otimes \pi^*\I_{n-1}^\cW \xrightarrow{\phi_n} \pi^*\I_n^\cW \to i_* \cS_n^\cW \to 0,
\end{equation*}
where $\phi_n$ is constructed in (\ref{phi_n}), $\pi\colon \P_S(\E)\to S$ is the projection map and $i\colon \Q\hookrightarrow \P_S(\E)$ is the embedding.
\end{defn}
Again we have $\cS_{n+2}^\cW \cong \cS_n^\cW \otimes \L$.

\begin{remark}\label{spinordef2}
We can also construct $\cS_n^\cW$ as the cokernel of $\phi_n^\circ$ where 
\begin{equation*}
    \phi_n^\circ\colon \O_{\P_S(\E)/S}(-1)\otimes \pi^*\I_{n-1}^{\circ\cW} \to \pi^*\I_n^{\circ\cW}
\end{equation*}
is the map sending $v\otimes \xi$ to $\xi v$.
\end{remark}

Restricting $\phi_n, n\in\ZZ$ to the quadric bundle $\Q$, there are exact sequences
\begin{equation}\label{spinorseq}
    \begin{split}
    & \dots \to \O_{\Q/S}(-2)\otimes p^*\I_{n-2}^\cW \xrightarrow{\phi_{n-1}} \O_{\Q/S}(-1)\otimes p^*\I_{n-1}^\cW \xrightarrow{\phi_n} p^*\I_n^\cW \to \cS_n^\cW \to 0,\\
    & 0\to \cS_n^\cW \to \O_{\Q/S}(1)\otimes p^*\I_{n+1}^\cW \xrightarrow{\phi_{n+2}} \O_{\Q/S}(2)\otimes p^*\I_{n+2}^\cW \xrightarrow{\phi_{n+3}} \dots,
    \end{split}
\end{equation}
where $p\colon \Q\to S$ is the quadric bundle.
\begin{cor}
Let $\cW'\subset \cW$ be isotropic subbundles and assume $\rank(\cW)-\rank(\cW')=1$. Let $\L_1=\cW/\cW'$. Then for all $n\in \ZZ$ there are short exact sequences on $\Q$,
\begin{equation*}
    0\to \cS_n^{\cW}\to \cS_n^{\cW'}\to \cS_{n+1}^{\cW}\otimes p^*\L_1^\vee\to 0,
\end{equation*}
where $p\colon \Q\to S$ is the quadric bundle.
\end{cor}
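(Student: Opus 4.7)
The plan is to build, on $\P_S(\E)$, a commutative diagram whose rows are the short exact sequences of Clifford ideals from Lemma \ref{2clideal} and whose vertical maps are the defining morphisms $\phi_n$ of spinor sheaves, then apply the snake lemma to read off the desired sequence on the cokernel row.

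First, I would record two instances of Lemma \ref{2clideal}: the one for index $n-1$, pulled back to $\P_S(\E)$ and tensored with $\O_{\P_S(\E)/S}(-1)$, and the one for index $n$ pulled back to $\P_S(\E)$. The vertical Clifford-multiplication-by-$v$ maps $\phi_n^\cW$, $\phi_n^{\cW'}$, and $\phi_{n+1}^\cW\otimes \id_{\L_1^\vee}$ then fit into the diagram
\begin{equation*}
\begin{tikzcd}[column sep=small]
0 \ar[r] & \O(-1)\otimes \pi^*\I_{n-1}^\cW \ar[r]\ar[d,"\phi_n^\cW"] & \O(-1)\otimes \pi^*\I_{n-1}^{\cW'} \ar[r]\ar[d,"\phi_n^{\cW'}"] & \O(-1)\otimes \pi^*\I_n^\cW\otimes \pi^*\L_1^\vee \ar[r]\ar[d,"\phi_{n+1}^\cW\otimes \id"] & 0 \\
0 \ar[r] & \pi^*\I_n^\cW \ar[r] & \pi^*\I_n^{\cW'} \ar[r] & \pi^*\I_{n+1}^\cW\otimes \pi^*\L_1^\vee \ar[r] & 0\rlap{.}
\end{tikzcd}
\end{equation*}
Commutativity of the left square is immediate from the construction of $\phi_n$, since the horizontal map is just the inclusion $\I_{n-1}^\cW\hookrightarrow \I_{n-1}^{\cW'}$. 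Commutativity of the right square amounts to the equality $v(\xi w)=(v\xi)w$ for $\xi$ a local section of $\I_{n-1}^{\cW'}$ and $w$ a local lift of a section of $\L_1=\cW/\cW'$, which is just the associativity of Clifford multiplication; this is the only nontrivial bookkeeping and I expect it to be the main (but mild) obstacle.

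Each vertical $\phi$ is injective by the discussion preceding Definition \ref{spinordef} (here we use that $q\neq 0$ and $S$ is integral). Hence the snake lemma degenerates to a short exact sequence on cokernels, and Definition \ref{spinordef}, together with $\O(-1)\otimes \pi^*\L_1^\vee$ already being identified with the twist of the third column by $\pi^*\L_1^\vee$, identifies these cokernels with $i_*\cS_n^\cW$, $i_*\cS_n^{\cW'}$, and $i_*(\cS_{n+1}^\cW\otimes p^*\L_1^\vee)$ respectively, using the projection formula $i_*\cS_{n+1}^\cW\otimes \pi^*\L_1^\vee\cong i_*(\cS_{n+1}^\cW\otimes p^*\L_1^\vee)$. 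Restricting the resulting exact sequence to $\Q$ (equivalently, removing $i_*$) yields the claimed short exact sequence on $\Q$. The case when $\rank(\cW')=0$ and $\cW$ is an isotropic line bundle is handled identically, using the second half of Lemma \ref{2clideal} in place of the first.
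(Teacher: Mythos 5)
Your proposal is correct and takes essentially the same approach as the paper, which also cites Lemma~\ref{2clideal} and observes that the short exact sequences of Clifford ideals are compatible with the maps $\phi_n$; your diagram and snake-lemma argument simply spell out what the paper leaves implicit.
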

\begin{proof}
By Lemma \ref{2clideal}, there are short exact sequences
\begin{equation*}
    0\to \I_n^{\cW}\to \I_n^{\cW'}\to \I_{n+1}^{\cW}\otimes \L_1^\vee\to 0.
\end{equation*}
Then the result follows because these short exact sequences are compatible with the map $\phi_n$ (\ref{phi_n}) defining the spinor sheaves.
\end{proof}

\begin{cor}
The spinor sheaf $\cS_n^\cW$ is reflexive on $\Q$. Let $\rank(\cW)=r$. Recall $p\colon \Q\to S$ is the quadric bundle.

(1) If $\rank(\E)=2m$, then 
\begin{equation*}
    (\cS_n^\cW)^\vee \cong \cS_{r-n-1}^{\cW} \otimes \O_{\Q/S}(-1) \otimes p^*(\det(\cW^\vee)\otimes \det(\E^\vee)\otimes \L^m).
\end{equation*}

(2) If $\rank(E)=2m+1$, then
\begin{equation*}
    (\cS_n^\cW)^\vee \cong \cS_{r-n}^{\cW} \otimes \O_{\Q/S}(-1) \otimes p^*(\det(\cW^\vee)\otimes \det(\E^\vee)\otimes \L^m).
\end{equation*}
\end{cor}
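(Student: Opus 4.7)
The plan is to dualize the length-one resolution of $i_*\cS_n^\cW$ from Definition \ref{spinordef} on the ambient $\P_S(\E)$, apply Grothendieck duality for the Cartier divisor $i\colon\Q\hookrightarrow\P_S(\E)$, and recognize the resulting sheaf as another spinor sheaf via Lemma \ref{clidealdual} and Remark \ref{spinordef2}.

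First, applying $\sHom_\P(-,\O_\P)$ to the defining short exact sequence yields a two-term complex of locally free sheaves whose only nonzero cohomology is in degree $1$, since $i_*\cS_n^\cW$ is torsion on $\P_S(\E)$. This cohomology is
\[
\coker\!\bigl(\phi_n^\vee\colon \pi^*(\I_n^\cW)^\vee \to \O_{\P_S(\E)/S}(1)\otimes\pi^*(\I_{n-1}^\cW)^\vee\bigr).
\]
Setting $\cT=\det(\cW^\vee)\otimes\det(\E^\vee)\otimes\L^m$, Lemma \ref{clidealdual} rewrites this as the $\pi^*\cT$-twist of a two-term complex of right Clifford ideals, and by tracing through the duality pairing (Clifford multiplication followed by the trace $\tr$) the middle map is identified with the $\O_{\P_S(\E)/S}(1)$-twist of $\phi_k^\circ$ for $k=r-n+1$ in case (1) and $k=r-n+2$ in case (2). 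By Remark \ref{spinordef2}, this cokernel therefore equals $i_*(\cS_k^\cW\otimes\O_{\Q/S}(1))\otimes\pi^*\cT$.

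On the other hand, Grothendieck duality for the Cartier divisor $\Q\subset\P_S(\E)$, whose normal bundle is $\O_\Q(\Q)\cong\O_{\Q/S}(2)\otimes p^*\L$, identifies the same cohomology sheaf with $i_*\bigl((\cS_n^\cW)^\vee\otimes\O_{\Q/S}(2)\otimes p^*\L\bigr)$. Equating the two expressions via the projection formula and the full faithfulness of $i_*$, then using the periodicity $\cS_{\ell+2}^\cW\cong\cS_\ell^\cW\otimes p^*\L$ to shift the index by $2$, yields the stated formulas in both parity cases. For reflexivity, iterating this duality computation returns $\cS_n^\cW$ up to a canonical isomorphism; combined with the vanishing of the higher sheaf Exts established above and the fact that $\cS_n^\cW$ restricts to the classical spinor bundle on the smooth locus of $\Q$, this forces the natural biduality map to be an isomorphism. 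The main obstacle is the bookkeeping of index shifts and line-bundle twists, especially the explicit matching of $\phi_n^\vee$ with $\phi_k^\circ$ under the Clifford-algebra duality pairing of Lemma \ref{clidealdual}; the Grothendieck duality step and the periodicity manipulation are routine.
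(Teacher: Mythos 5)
Your proposal is correct but follows a genuinely different route from the paper's. The paper does not use Grothendieck duality for the divisor $\Q\subset\P_S(\E)$ at all: it instead works directly on $\Q$ with the infinite exact sequence (\ref{spinorseq}). Dualizing the second (right-going) sequence in (\ref{spinorseq}) immediately produces a right resolution of $(\cS_n^\cW)^\vee$ by twists of $p^*(\I_k^\cW)^\vee$, and the key identity $\phi_{n+2}^\vee\cong\phi_{r-n-1}^\circ$ together with Lemma~\ref{clidealdual} and Remark~\ref{spinordef2} identifies this directly as the resolution of $\cS_{r-n-1}^\cW\otimes\O_{\Q/S}(-1)\otimes p^*(\det\cW^\vee\otimes\det\E^\vee\otimes\L^m)$, with no normal-bundle twist and no periodicity adjustment. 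Your approach works on the ambient $\P_S(\E)$, dualizes the two-term defining resolution, and converts $\sExt^1_{\P_S(\E)}$ into $(\cS_n^\cW)^\vee\otimes N_{\Q/\P_S(\E)}$ via Grothendieck duality for the regular divisor with $N_{\Q/\P_S(\E)}\cong\O_{\Q/S}(2)\otimes p^*\L$; this lands on $\cS_{r-n+1}^\cW$, and you then use $\cS_{\ell+2}^\cW\cong\cS_\ell^\cW\otimes p^*\L$ to cancel the extra $\L$ and shift back to $\cS_{r-n-1}^\cW$. Both are valid: your version is arguably the more standard way to dualize a reflexive sheaf with a length-one ambient resolution, at the cost of two extra twists that must cancel; the paper's version sidesteps both by staying on $\Q$.

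Two smaller remarks. The index-matching you flag as ``the main obstacle'' does check out: with the identification of Lemma~\ref{clidealdual} realized via the trace pairing and the surjection $g(\xi)=\det\cW\cdot\xi$, the transpose of $\xi_0\mapsto v\xi_0$ becomes $\xi\mapsto\xi v$, which under $g$ is exactly $\zeta\mapsto\zeta v=\phi_{r-n+1}^\circ$ (in case (1)), consistent with your indices. On the other hand, your reflexivity argument is looser than the rest: ``iterating the duality computation returns $\cS_n^\cW$'' only gives an abstract isomorphism $(\cS_n^\cW)^{\vee\vee}\cong\cS_n^\cW$, and to conclude that the \emph{canonical} biduality map is an isomorphism you would still need to track that the dualizations are compatible with the resolutions. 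The paper's one-line argument --- that double-dualizing (\ref{spinorseq}) reproduces the same sequences, hence the natural map between cokernels is an isomorphism --- is the cleaner way to make this precise, and you would be better served by that argument than by an appeal to the smooth locus.
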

\begin{proof}
The reflexivity of $\cS_n^\cW$ follows from the observation that taking double dual of (\ref{spinorseq}) gives the same exact sequences.

(1) Taking the dual of the second sequence in (\ref{spinorseq}), we have another exact sequence
\begin{equation*}
    \dots \to \O_{\Q/S}(-2)\otimes p^*(\I_{n+2}^\cW)^\vee \xrightarrow{\phi_{n+2}^\vee} \O_{\Q/S}(-1) \otimes p^*(\I_{n+1}^\cW)^\vee \to (\cS_n^\cW)^\vee \to 0.
\end{equation*}
The result follows from Lemma \ref{clidealdual} and Remark \ref{spinordef2} by noticing that $\phi_{n+2}^\vee \cong \phi_{r-n-1}^\circ$. (2) can be proved similarly.
\end{proof}

%-----------------------------------------------------------------
\subsection{Non-primitive quadratic forms of rank two}\label{qfr2}
Given a quadric surface bundle with a smooth section, the hyperbolic reduction with respect to the smooth section as in Definition \ref{hypreddef} gives a quadratic form of rank $2$. If $S_2\neq \emptyset$, then the new quadratic form is non-primitive. As a preparation for Section \ref{smoothsec}, we will study possibly non-primitive quadratic forms of rank $2$ in this section.

Let $q\colon\E\to\L$ be a non-zero quadratic form of rank $2$, i.e., $\rank(\E)=2$, and let $p\colon \Q=\{q=0\} \to S$ be the associated quadric bundle. Then $q|_{S_2}=0$, and $q\neq 0$ implies $S_2\neq S$. In this case, $p$ has relative dimension $0$ over $S\backslash S_2$ and is not flat if $S_2\neq \emptyset$. We will give a result on the relation between Clifford algebras and Clifford ideals. The non-flatness of $p$ requires a non-trivial argument.

We observe that $\O_{\Q/S}(-1)$ is an isotropic line bundle of $p^*q\colon p^*\E\to p^*\L$. The Clifford multiplication gives 
\begin{equation*}
    p^*\B_n\otimes \I_m^{\O_{\Q/S}(-1)}\to \I_{n+m}^{\O_{\Q/S}(-1)}.
\end{equation*}
Since $\B_n$ is locally free, the map above induces
\begin{equation}\label{hommap}
    Lp^*\B_n \cong p^*\B_n \to \sHom_{\O_{\Q}}(\I_m^{\O_{\Q/S}(-1)}, \I_{n+m}^{\O_{\Q/S}(-1)}).
\end{equation}

\begin{lemma}\label{dim0lem}
Let $p\colon \Q\to S$ be the quadric bundle associated with a non-zero quadratic form  $q\colon \E\to \L$ of rank $2$. Let $\pi\colon \P_S(\E)\to S$ be the projection map and let $i\colon \Q\hookrightarrow \P_S(\E)$ be the embedding. Then $p=\pi\circ i$.

(1) There is a short exact sequence
\begin{equation*}
    0\to \pi^*\B_{n-1} \otimes \O_{\P_S(\E)/S}(-1)\xrightarrow{\phi_n} \pi^*\B_n \to i_*\sHom_{\O_{\Q}}(\I_1^{\O_{\Q/S}(-1)}, \I_{n+1}^{\O_{\Q/S}(-1)})\to 0
\end{equation*}
where $\phi_n$ is the map (\ref{phi_n}).

(2) The map 
\begin{equation*}
    \B_n\to Rp_*\sHom_{\O_{\Q}}(\I_m^{\O_{\Q/S}(-1)}, \I_{n+m}^{\O_{\Q/S}(-1)})
\end{equation*}
induced by (\ref{hommap}) is an isomorphism for all $n,m\in \ZZ$.
\end{lemma}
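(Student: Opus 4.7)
First I would show that $\phi_n$ is injective on the integral scheme $\P_S(\E)$. The composition $\phi_n\circ\phi_{n-1}$ (after identifying $\B_n\cong\B_{n-2}\otimes\L$) equals multiplication by the non-zero section $q$, so $\phi_n$ is an isomorphism on the complement of $\Q=\{q=0\}$ and in particular at the generic point; as a map of locally free sheaves of the same rank on an integral scheme, it is therefore injective. Moreover, since $q$ factors through $\phi_n$, multiplication by $q$ annihilates $\coker(\phi_n)$, so $\coker(\phi_n)=i_*\mathcal{C}'$ for some $\mathcal{C}'$ on $\Q$, where $\mathcal{C}'\cong\coker(\phi_n|_\Q)$ by right exactness of pullback.

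On $\Q$, the image of $\phi_n|_\Q\colon p^*\B_{n-1}\otimes\O_{\Q/S}(-1)\to p^*\B_n$ coincides with the Clifford ideal $\I_n^{\O_{\Q/S}(-1)}$ of the pulled-back form $p^*q$ with respect to the isotropic line bundle $\O_{\Q/S}(-1)\subset p^*\E$, by the very definition of the Clifford ideal. Hence Lemma \ref{2clideal} yields $\mathcal{C}'\cong\I_{n+1}^{\O_{\Q/S}(-1)}\otimes\O_{\Q/S}(1)$. To identify this with the claimed $\sHom$ sheaf, I would show that $\I_1^{\O_{\Q/S}(-1)}=\O_{\Q/S}(-1)$ on $\Q$: the inclusion $\O_{\Q/S}(-1)\subseteq\I_1^{\O_{\Q/S}(-1)}$ is tautological (take $1\in p^*\B_0$), and the reverse inclusion follows from a local Clifford-algebra computation showing $\beta v\in\langle v\rangle$ for every local $\beta\in p^*\B_0$ and every local generator $v$ of $\O_{\Q/S}(-1)$; in a local basis $\{1,e_1e_2\}$ of $\B_0$, this reduces to verifying $(e_1e_2)v\in\langle v\rangle$, which is immediate from the relation $q(v)=0$ on $\Q$ together with the rank-$2$ Clifford identities. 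Combining gives $\mathcal{C}'\cong\sHom_{\O_\Q}(\I_1^{\O_{\Q/S}(-1)},\I_{n+1}^{\O_{\Q/S}(-1)})$, completing part (1).

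For part (2), the case $m=1$ is immediate by applying $R\pi_*$ to the short exact sequence of part (1): since $\pi$ is a $\P^1$-bundle, $R\pi_*\O_{\P_S(\E)/S}(-1)=0$ and $R\pi_*\O_{\P_S(\E)}=\O_S$, so the sequence collapses to $\B_n\cong Rp_*\sHom_{\O_\Q}(\I_1^{\O_{\Q/S}(-1)},\I_{n+1}^{\O_{\Q/S}(-1)})$. For general $m$, the plan is to reduce to $m=1$ using the Clifford multiplication structure: on the open $S\setminus S_2$ where $p^*q$ is primitive, Lemma \ref{mulisolem}(1) provides a canonical isomorphism $p^*\B_{m-1}\otimes_{p^*\B_0}\I_1^{\O_{\Q/S}(-1)}\xrightarrow{\sim}\I_m^{\O_{\Q/S}(-1)}$, which induces a compatible isomorphism $\sHom(\I_m,\I_{n+m})\cong\sHom(\I_1,\I_{n+1})$. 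The hard part will be handling the degeneration locus $S_2$, where $p$ fails to be flat and the fibers jump to $\P^1$; there, $Rp_*$ must be computed via derived base change, and one must show that the Tor contributions coming from the Koszul resolution $0\to\O_{\P_S(\E)}(-2)\otimes\pi^*\L^\vee\xrightarrow{q}\O_{\P_S(\E)}\to\O_\Q\to 0$ combine with the cohomology on the degenerate $\P^1$-fibers to yield exactly $\B_n|_s$ in the correct rank and cohomological position. Alternatively, I would attempt to mimic the argument of part (1) directly to produce a short exact sequence of the same form with $\I_m$ replacing $\I_1$, whose kernel has vanishing $R\pi_*$.
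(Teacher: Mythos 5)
Your argument for part (1) is essentially the paper's: $\phi_n$ is injective because $\phi_n\circ\phi_{n-1}=q\neq 0$ on an integral scheme, the cokernel is set-theoretically and scheme-theoretically supported on $\Q$ and identified with $\coker(\phi_n|_\Q)$, which Lemma~\ref{coklem} (with $\cW=\O_{\Q/S}(-1)$, $r=1$) computes as $\I_{n+1}^{\O_{\Q/S}(-1)}\otimes\O_{\Q/S}(1)$, and then $\I_1^{\O_{\Q/S}(-1)}\cong\O_{\Q/S}(-1)$ gives the $\sHom$ description. Your $m=1$ case of (2) is likewise correct and matches the paper.

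For general $m$, however, your proof has a genuine gap. You overlook the periodicity $\I_{m+2}^{\O_{\Q/S}(-1)}\cong\I_m^{\O_{\Q/S}(-1)}\otimes p^*\L$ (stated after Definition~\ref{clideal}), together with $\B_{n+2}\cong\B_n\otimes\L$, which immediately reduces the claim to $m\in\{0,1\}$; without this reduction, the Lemma~\ref{mulisolem}(1) route you sketch requires primitivity of $p^*q$ and therefore only works away from $S_2$ --- precisely where the lemma is already easy. You acknowledge this, but your two alternatives are not carried out and neither is on the right track as stated: the derived-base-change computation over $S_2$ is not how the paper argues, and ``mimic part (1) with $\I_m$ replacing $\I_1$'' does not produce a short exact sequence with $\pi^*\B_{n-1}\otimes\O(-1)$ as kernel when $m=0$. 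What the paper actually does for $m=0$ is different in shape: it forms the Euler sequence $0\to\O_{\P_S(\E)/S}(-1)\to\pi^*\E\to F\to 0$ tensored with $\pi^*\B_n$, compares it via Clifford multiplication $f\colon\pi^*\B_n\otimes\pi^*\E\to\pi^*\B_{n+1}$ to the sequence from part (1) for $n+1$, and notes that the induced map $g$ on cokernels, after tensoring by $F^\vee$, yields $\pi^*\B_n\to i_*\sHom(\I_0,\I_n)$ because $\fS_1\cong F|_\Q$. Since $f$ is surjective with kernel pulled back from $S$, and $F^\vee\cong\pi^*\det(\E^\vee)\otimes\O(-1)$, one gets $R\pi_*(\ker(g)\otimes F^\vee)=R\pi_*(\ker(f)\otimes F^\vee)=0$, which is exactly the vanishing needed. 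This specific diagram and the $R\pi_*\O(-1)=0$ input for it are the missing content in your proposal; supplying the parity reduction and the $m=0$ diagram argument would complete it.
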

\begin{proof}
(1) Since $S$ is integral, $\phi_n\circ \phi_{n-1}=q$ and $q\neq 0$, we have $\phi_n$ is injective and an isomorphism outside of $\Q$. Moreover, $\coker(\phi_n)$ is supported on $\Q$ schematically and we can write it as $i_*\fS_n$. Note that $I_1^{\O_{\Q/S}(-1)}\cong \O_{\Q/S}(-1)$ and $\fS_n\cong \coker(\phi_n|_{\Q})$. By Lemma \ref{coklem}, we get
\begin{equation*}
    \fS_n\cong \sHom_{\O_{\Q}}(\I_1^{\O_{\Q/S}(-1)}, \I_{n+1}^{\O_{\Q/S}(-1)}).
\end{equation*}

(2) It suffices to prove the claim for $m=0,1$ and all $n$. Applying $R\pi_*$ to the sequence in (1), we get the claim holds for $m=1$.

Now we prove for $m=0$. Let $F=\coker(\O_{\P_S(\E)/S}(-1)\to \pi^*\E)\cong \pi^*\det(\E) \otimes \O_{\P_S(\E)/S}(1)$. From (1), we get
\begin{equation*}
    \fS_1\cong \sHom_{\O_{\Q}}(\I_1^{\O_{\Q/S}(-1)}, \I_2^{\O_{\Q/S}(-1)}) \cong p^*\det\E\otimes \O_{\Q/S}(1).
\end{equation*}
Comparing $F$ and $\fS_1$, we get $\fS_1 \cong F|_{\Q}$. We have commutative diagrams with exact rows
\begin{equation*}
    \begin{tikzcd}
    0\arrow{r} & \pi^*\B_n \otimes \O_{\P_S(\E)/S}(-1)\arrow{r} \arrow[equal]{d} & \pi^*\B_n \otimes \pi^*\E \arrow{r} \arrow{d}{f} & \pi^*\B_n\otimes F \arrow{r} \arrow{d}{g} & 0\\
    0 \arrow{r} & \pi^*\B_n \otimes \O_{\P_S(\E)/S}(-1) \arrow{r} & \pi^*\B_{n+1} \arrow{r} & i_*\fS_{n+1} \arrow{r} & 0,
    \end{tikzcd}
\end{equation*}
where $f$ is pull-back of the Clifford multiplication $\B_n\otimes \E \to \B_{n+1}$ and $g$ is the map induced by the diagram. Tensoring $g$ by $F^\vee$ gives
\begin{equation*}
    \pi^*\B_n \to i_*\fS_{n+1}\otimes F^\vee \cong i_* \sHom_{\O_{\Q}}(\fS_1, \fS_{n+1}) \cong i_* \sHom_{\O_{\Q}}(\I_0^{\O_{\Q/S}(-1)}, \I_n^{\O_{\Q/S}(-1)}).
\end{equation*}
Clearly, $f$ is surjective. Thus, $g$ is also surjective and $\ker(f)\cong \ker(g)$. Observe that $R\pi_*(\ker(f)\otimes \F^\vee)=0$. Hence, $R\pi_*(\ker(g)\otimes \F^\vee)=0$ and the claim holds for $m=0$.
\end{proof}
%-----------------------------------------------------------------
\section{Relative Hilbert schemes of lines}\label{hilbsec}
In this section, we focus on describing the relative Hilbert schemes of lines of flat quadric surface bundles. 

In the rest of the paper, we will use the following notations.
\begin{itemize}
    \item Let $p\colon \Q\to S$ be a flat quadric surface bundle with the associated quadratic form $q\colon \E\to \L$ and the generalized Clifford algebra $\B \cong \bigoplus_{n\in\ZZ} \B_n$.
    \item Let $\pi\colon \P_S(\E) \to S$ be the projection map and let $i\colon \Q\hookrightarrow \P_S(\E)$ be the embedding. Then $p=\pi\circ i$.
    \item  Let $\wt{S} =\Spec_S(\cC_0) $ be the \textit{discriminant cover} over $S$ where $\cC_0$ is the center of $\B_0$. Denote the covering map by $\alpha\colon \wt{S} \to S$. The description of $\alpha$ is given in Lemma \ref{disccover} below.
    \item Let $\rho \colon M\to S$ be the relative Hilbert scheme of lines of $p$ and it factors as $M \xrightarrow{\tau}\wt{S}\xrightarrow{\alpha} S$.
\end{itemize}

Observe that $M\subset \Gr_S(2, \E)$. Let $\R$ be the universal subbundle on $\Gr_S(2, \E)$. Then $M \subset \Gr_S(2,\E)$ is the zero locus of the section
\begin{equation}\label{msec}
    s_{M} \in \Gamma(\Gr_S(2,\E), \Sym^2(\R^\vee)\otimes \pi_{\Gr}^*\L) \cong \Gamma(S, \Sym^2(\E^\vee)\otimes \L)
\end{equation}
corresponding to the section $s_q$ in (\ref{qsec}) defining $\Q$. Here $\pi_{\Gr}\colon \Gr_S(2, \E)\to S$ is the projection map. 

For a geometric point $s\in S$, the fiber $M_s$ is 
\begin{enumerate}
    \item a disjoint union of two smooth conics if $\Q_s$ is smooth; 
    \item a smooth conic over the dual numbers $\kk[\epsilon]/\epsilon^2$ if $\Q_s$ has corank $1$;
    \item a union of two planes intersecting at a point if $\Q_s$ has corank $2$.
\end{enumerate}

When $p$ has a smooth section as in Definition \ref{regisodef}, we further denote by
\begin{itemize}
    \item $Z\subset M$ the subscheme parametrizing lines that intersect the smooth section and $\beta: Z\hookrightarrow M\xrightarrow{\rho} S$ the composition map.
    \item $\R_Z$ the restriction of the universal subbundle $\R$ on $\Gr_S(2,\E)$ to $Z$.
\end{itemize}

\medskip

We would like to point out that whether $\rho \colon M\to S$ factors through a double cover is a subtle question in positive characteristic. This is the case when $\cchar(\kk)\neq 2, 3$ because there is the decomposition
\begin{equation*}
    \B_0\cong \O_S \oplus \Lambda^2 \E \otimes \L^\vee \oplus \det(\E)\otimes (\L^\vee)^2.
\end{equation*}
In this case, $\widehat{\cC_0}:= \O_S\oplus \det(\E)\otimes (\L^\vee)^2$ is a subalgebra inside the center $\cC_0$ and $\widehat{\alpha}\colon \widehat{S}:= \Spec_S(\widehat{\cC_0})\to S$ is a double cover over $S$ ramified along $S_1$. The inclusion $\widehat{\cC_0}\subset \cC_0$ implies that $\rho\colon M\to S$ factors through $\widehat{\alpha}$. In general, it is unclear if we can embed $\widehat{\cC_0}$ inside $\cC_0$. But we still have the following when $\cchar(\kk)\neq 2$.

\begin{lemma}\label{disccover}
The center $\cC_0$ of $\B_0$ is locally free of rank $2$ in the following two cases:

(a) $S_2=\emptyset$ or

(b) $S$ is a locally factorial integral scheme and $S_1\neq S$.

In these cases, $\alpha\colon \wt{S}\to S$ is a double cover over $S$ ramified along the degeneration locus $S_1$.
\end{lemma}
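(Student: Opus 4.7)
The plan is to realize $\cC_0$ as a rank-$2$ locally free $\O_S$-module by exhibiting a short exact sequence
\begin{equation*}
0\to \O_S\to \cC_0\to \det(\E)\otimes(\L^\vee)^2\to 0,
\end{equation*}
where the right-hand map is the composition $\cC_0\hookrightarrow F_4=\B_0\twoheadrightarrow F_4/F_2\cong\det(\E)\otimes(\L^\vee)^2$ coming from the filtration (\ref{b01fil}). Since an extension of a line bundle by a line bundle is automatically locally free of rank $2$, it suffices to check \emph{(i)} that the right-hand map is surjective and \emph{(ii)} that its kernel $\cC_0\cap F_2$ equals $\O_S$. Both reduce to a Clifford computation in an orthogonal basis $(e_i)$ of $\E$, which exists \'etale-locally because $\cchar(\kk)\neq 2$. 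The element $\omega:=e_1e_2e_3e_4$ anti-commutes with every $e_i$ (rank $4$ being even) and hence commutes with all of $\B_0$, so $\omega\in\cC_0$; its image generates $F_4/F_2$ locally, establishing (i) globally. For (ii), writing $z=a+\sum_{i<j}b_{ij}e_ie_j\in F_2$ and imposing $[z,e_ke_\ell]=0$ yields relations of the form $q(e_i)\,b_{ij}=0$; when at most one $q(e_i)$ fails to be a unit (i.e., the form has corank at most $1$), every $b_{ij}$ vanishes and $z\in\O_S$.

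\textbf{Case (a).} With $S_2=\emptyset$, every geometric fiber has corank at most $1$, so $q$ locally diagonalizes as $\mathrm{diag}(u_1,u_2,u_3,t)$ with $u_i$ units and $t$ a local equation of $S_1$. The calculation above applies directly, giving $\cC_0\cap F_2=\O_S$; combined with (i), the short exact sequence holds and $\cC_0$ is locally free of rank $2$.

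\textbf{Case (b) and ramification.} With $S$ locally factorial integral and $S_1\neq S$, the degeneration locus $S_1=\{\det(b_q)=0\}$ is a Cartier divisor and $S_2\subseteq S_1$ has codimension $\geq 2$ in $S$. Surjectivity (i) is global from the local construction. For (ii), case (a) applied to $p|_{S\setminus S_2}$ gives $(\cC_0\cap F_2)|_{S\setminus S_2}=\O_{S\setminus S_2}$, so $(\cC_0\cap F_2)/\O_S$ is a torsion subsheaf of the locally free sheaf $F_2/F_0\cong\Lambda^2\E\otimes\L^\vee$ and hence vanishes. Thus $\cC_0\cap F_2=\O_S$ and $\cC_0$ is again locally free of rank $2$. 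A final computation of $\omega^2$ in an orthogonal basis yields a unit multiple of $\det(b_q)$, so $\wt{S}=\Spec_S(\cC_0)\to S$ is locally the double cover $t^2=c\cdot\det(b_q)$, ramified precisely along $\{\det(b_q)=0\}=S_1$. The main subtlety is case (b) at corank-$2$ points: there the fiber center can strictly contain the specialization of $\O_S\oplus\det(\E)\otimes(\L^\vee)^2$, so local freeness of $\cC_0$ cannot be certified pointwise on $S_2$; the codimension bound coming from local factoriality, together with torsion-freeness of the ambient filtration quotient $F_2/F_0$, is exactly what propagates case (a) across $S_2$.
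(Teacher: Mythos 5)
Your case (a) is essentially the paper's argument: both hinge on choosing a local orthogonal basis $(e_i)$, which exists Zariski-locally when $S_2=\emptyset$, and then using $\omega=e_1e_2e_3e_4$ to pin down $\cC_0$. The paper certifies rank $2$ by counting $\dim_{\kk(s)}(\cC_0\otimes\kk(s))$ using the Azumaya property of $\B_0\otimes\kk(s)$, whereas you package this as a short exact sequence with the filtration quotient; this is a cosmetic difference.

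Case (b) has a genuine gap. Your entire construction of $\omega$, and hence of a surjection $\cC_0\to F_4/F_2$, requires an orthogonal basis, and such a basis need not exist at points of $S_2$, \emph{not even \'etale- or formal-locally}. For instance, after splitting off the two unit directions, the residual rank-$2$ form $ax_3^2+bx_3x_4+cx_4^2$ over $\kk[a,b,c]_{(a,b,c)}$ (the universal family, cf.\ Example~\ref{univqsb}) cannot be written as $\alpha u^2+\beta v^2$ with a unit change of variables: comparing discriminants forces $\alpha\beta$ to lie in $(ac-b^2/4)$, and one checks that the remaining compatibility equations have no solution in the local ring. Thus ``Surjectivity (i) is global from the local construction'' is an unsupported assertion on $S_2$; the image of $\cC_0\to F_4/F_2$ is a priori only an ideal-sheaf twist $I\cdot(F_4/F_2)$ with $V(I)\subseteq S_2$, and local factoriality does not force $I=\O_S$ (e.g.\ $I=(s,t)$ in $\kk[s,t]$ is not principal). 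Your proposed fix --- torsion-freeness of $F_2/F_0$ --- handles (ii) (that $\cC_0\cap F_2=\O_S$), which is fine; but it does nothing for (i). In $\cchar(\kk)\neq 2,3$ surjectivity does hold globally via the splitting $\B_0\cong\O_S\oplus\Lambda^2\E\otimes\L^\vee\oplus\det\E\otimes(\L^\vee)^2$, as the paper notes just before the lemma; but the lemma is stated in $\cchar(\kk)\neq 2$, and the paper explicitly says the splitting argument is ``unclear'' there, so it cites \cite[Lemma~1.6.1]{abbqfib} rather than argue as you do.

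A secondary error: the claim that $\codim(S_2,S)\geq 2$ in case (b) is false. Take $q=x_1^2+x_2^2+tx_3^2+tx_4^2$ over $\AA^1=\Spec\kk[t]$: then $S$ is locally factorial, $S_1=\{t^2=0\}\neq S$, but $S_2=\{t=0\}$ has codimension $1$. This doesn't break your (ii) argument (torsion subsheaves of torsion-free sheaves vanish regardless of codimension), but it does invalidate the final sentence where you invoke ``the codimension bound coming from local factoriality.''
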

\begin{proof}
(a) Let $s\in S$ be an arbitrary point. When $S_2=\emptyset$, Lemma 1.4 in \cite{apsqsb} implies that the stalk of $\E$ at $s$ has an orthogonal basis $\{v_i\}_{i=1}^4$, i.e., $b_q(v_i, v_j)= 0$ for $i\neq j$. Then $\{1, v_1v_2v_3v_4\}$ is contained in the stalk of $\cC_0$ at $s$, which gives $\dim_{\kk(s)}(\cC_0\otimes \kk(s))\geqslant 2$. On the other hand, $\B_0\otimes \kk(s)$ is an Azumaya algebra whose center has dimension $2$. Thus,  $\dim_{\kk(s)}(\cC_0\otimes \kk(s))=2$ and $\cC_0$ is locally free of rank $2$. 

The case (b) is Lemma 1.6.1 in \cite{abbqfib}.
\end{proof}

If $p$ has simple degeneration, i.e., $S_2=\emptyset$, then $\tau: M\to \wt{S}$ is a smooth conic bundle. When $p$ has a smooth section, the composition $Z\hookrightarrow M \xrightarrow{\tau} \wt{S}$ is an isomorphism and thus $\tau$ is a $\P^1$-bundle. In this case, there are well-known relations among the relative Hilbert scheme of lines $M$, the even Clifford algebra $\B_0$, and Clifford ideals $\I_n^{\R_Z}$.

\begin{lemma}\label{sdb0lem}
Assume that the flat quadric surface bundle $p\colon \Q\to S$ has a smooth section and $S_2=\emptyset$. Then we can identify $\beta \cong \alpha \colon Z\xrightarrow{\cong}\wt{S}\to S$. For all $n\in \ZZ$, we have

(1) $M \cong \P_Z(\I_n^{\R_Z})$;

(2) $\B_0 \cong \beta_* \sEnd(\I_n^{\R_Z}).$\\
Here $\I_n^{\R_Z}$ is the $n$-th Clifford ideal of $\beta^*q$ on $Z$ in Definition \ref{clideal}.
\end{lemma}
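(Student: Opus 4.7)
My plan is to handle the three claims in sequence: identify $\beta$ with $\alpha$, prove (2) via an Azumaya-algebra argument on $\wt{S}$, and deduce (1) from (2) through the relative Severi--Brauer correspondence.

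\emph{Identification of $\beta$ with $\alpha$.} Since $S_2=\emptyset$, the Stein factorization $\rho=\alpha\circ\tau$ has $\tau\colon M\to\wt{S}$ a smooth conic bundle, and the composition $Z\hookrightarrow M\xrightarrow{\tau}\wt{S}$ is a natural morphism over $S$. I would check it is an isomorphism by a fibrewise analysis. Over a smooth fibre $\Q_s\cong\P^1\times\P^1$, the section point lies on exactly two lines, one per ruling, matching the two geometric points of $\wt{S}_s$; over a corank-$1$ fibre the section avoids the vertex of the cone, so the unique ruling line through the section point matches the unique non-reduced point of $\wt{S}_s$ of length two. Both $Z$ and $\wt{S}$ are finite of degree $2$ over $S$ with matching local lengths in each fibre, so this bijective finite morphism is an isomorphism.

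\emph{Proof of (2).} Clifford multiplication makes $\I_n^{\R_Z}$ a left $\beta^*\B_0$-module on $Z$. Since the centre acts via the canonical map $\beta^*\cC_0\to\O_Z$, the action descends to a map of sheaves of $\O_Z$-algebras
\[
  \varphi\colon \B_0^{\wt{S}}:=\B_0\otimes_{\cC_0}\O_{\wt{S}}\longrightarrow \sEnd_{\O_Z}(\I_n^{\R_Z})
\]
on $\wt{S}\cong Z$, where $\B_0^{\wt{S}}$ is the rank-$4$ Azumaya algebra corresponding to $\B_0$ over its centre. Both sides are locally free of rank $4$, so it suffices to check stalkwise that $\varphi$ is an isomorphism. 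The section $Z\hookrightarrow M$ of the smooth conic bundle $\tau$ trivialises the Brauer class of $\B_0^{\wt{S}}$ étale-locally on $\wt{S}$, so in a small enough étale chart both sides become $M_2(\O_Z)$; then $\varphi$ is a nonzero ring map between matrix algebras of the same rank, hence an isomorphism by simplicity of $M_2$. In explicit coordinates this check can be carried out by choosing an orthogonal basis of $\E$ over $S\setminus S_1$ with $\cW=\langle e_1\rangle$ (so the two sheets of $\wt{S}$ correspond to the two isotropic $2$-planes through $e_1$) and extending across $S_1$ in the strict henselization, where Lemma~\ref{coklem} produces an explicit generator of $\I_n^{\R_Z}$ that witnesses the faithfulness of the action even in the ramified non-reduced situation. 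Applying $\beta_*=\alpha_*$ then yields $\B_0\cong\beta_*\sEnd(\I_n^{\R_Z})$; independence of $n$ is Lemma~\ref{mulisolem}(2).

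\emph{Deducing (1), and the main obstacle.} With $\beta=\alpha$ the inclusion $Z\hookrightarrow M$ is a section of $\tau$, so the smooth conic bundle $\tau$ is Zariski-locally a trivial $\P^1$-bundle. By the relative Severi--Brauer/central simple algebra correspondence recalled in the introduction (\cite{csa}, Theorems 2.4.3 and 5.2.1), $M$ is the Brauer--Severi variety of $\B_0^{\wt{S}}$; combining this with (2) gives $M\cong\P_Z(\I_n^{\R_Z})$ (since the Brauer--Severi variety of $\sEnd_{\O_Z}(\I_n^{\R_Z})$ is $\P_Z(\I_n^{\R_Z})$). Independence of $n$ follows because $\I_{n+2}^{\R_Z}\cong\I_n^{\R_Z}\otimes\beta^*\L$, so $\P_Z(\I_{n+2}^{\R_Z})\cong\P_Z(\I_n^{\R_Z})$, and $\I_1^{\R_Z}\cong\B_1\otimes_{\B_0}\I_0^{\R_Z}$ from Lemma~\ref{mulisolem}(1) differs from $\I_0^{\R_Z}$ by a line-bundle twist on $\wt{S}$ (since $\B_1$ is an invertible $\B_0$-bimodule acting as such on the simple $\B_0^{\wt{S}}$-module). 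The hard part is the stalkwise check of (2) over $S_1$, where $\alpha$ ramifies and $\wt{S}$ becomes non-reduced; the smooth section is what keeps the local Brauer class of $\B_0^{\wt{S}}$ trivial and the Clifford ideal faithful there, and this is the essential input that makes the Morita-type argument go through across the ramification.
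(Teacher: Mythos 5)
Your overall strategy is genuinely different from the paper's. The paper proves (1) \emph{first}, directly: after identifying $\beta\cong\alpha$, it base-changes the spinor sheaf sequence along $\beta$, observes that the spinor sheaf $\cS_0^{\R_Z}$ restricted at a geometric point $z=[L]\in Z$ is the (reflexive rank-one) sheaf $\O_{\Q_{\beta(z)}}(L)$, so that $M\cong\P_Z(p_{Z*}\cS_0^{\R_Z})$, and then computes $p_{Z*}\cS_n^{\R_Z}\cong\I_n^{\R_Z}$ from the base-changed sequence; part (2) is then obtained by combining (1) with the cited Lemma 4.2 of \cite{kuzcubic4}, which says that the pushforward of the Azumaya algebra of the conic bundle $\tau$ is $\B_0$. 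You instead prove (2) first by a local Azumaya argument and deduce (1) from the relative Severi--Brauer correspondence. Both routes are in the same circle of ideas, but the organization and the key inputs differ: the paper's geometry-first approach avoids needing the full relative Severi--Brauer correspondence and instead constructs the $\P^1$-bundle structure explicitly via spinor sheaves; your algebra-first approach is closer to the spirit of \cite{kuzcubic4} itself.

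Two steps in your proof of (2) are stated but not really verified, and they carry the actual weight. First, you assert that the action of $\beta^*\cC_0$ on $\I_n^{\R_Z}$ factors through the canonical map $\beta^*\cC_0\to\O_Z$; this is what makes $\varphi$ well-defined as a map out of $\B_0^{\wt{S}}$, and it is not automatic. Concretely, the centre $\cC_0$ of $\B_0$ contains, locally, an element $\omega=v_1v_2v_3v_4$ with $\omega^2$ a unit times $\det(b_q)$, and one has to check that $\omega$ acts on the Clifford ideal $\I_n^{\R_Z}$ by precisely the scalar determined by the sheet of $\wt{S}\cong Z$ the point lies on (i.e., by the square root of $\det b_q$ that the $\O_Z$-algebra structure chooses), not by the other root. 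This is true but is exactly the content one needs; without it, $\varphi$ is a map from the ``wrong'' quotient of $\B_0$. Second, over $S_1$ (where $\alpha$ ramifies and the fibre of $\wt{S}$ is non-reduced) your appeal to Lemma~\ref{coklem} producing ``an explicit generator witnessing faithfulness'' is too vague to be checkable: here $\B_0^{\wt{S}}$ is not $M_2$ of a field but $M_2$ of a dual-number ring, and the simplicity argument no longer applies literally; you need that $\varphi$ is surjective at these points, which requires an actual local computation. Finally, your appeal to~\cite{csa}, Theorems 2.4.3 and 5.2.1 for the Severi--Brauer step is an over-a-field citation; for the scheme-theoretic statement you would want the relative version (as the paper acknowledges in its introduction), though this is a minor attribution issue rather than a mathematical gap.

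On the plus side, your fibrewise argument that $Z\hookrightarrow M\to\wt{S}$ is an isomorphism is a correct and useful elaboration of a step the paper merely asserts; the only thing worth adding is that you are implicitly using that both $Z$ and $\wt{S}$ are finite \emph{flat} of degree $2$ over $S$ when $S_2=\emptyset$, so that ``matching local lengths'' plus surjectivity of $\O_{\wt{S}}\to\beta_*\O_Z$ on fibres upgrades to an isomorphism by Nakayama.
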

\begin{proof}
(1) Consider base changes along $\beta\colon Z\to S$,
\begin{equation*}
    \begin{tikzcd}
    \Q\times_S Z \arrow{r}{\beta_{\Q}} \arrow{d}{p_Z} & \Q \arrow{d}{p}\\
    Z \arrow{r}{\beta} & S,
    \end{tikzcd}
    \quad
    \begin{tikzcd}
    \P_S(\E)\times_S Z \arrow{r}{\beta_{\E}} \arrow{d}{\pi_Z} & \P_S(\E) \arrow{d}{\pi}\\
    Z \arrow{r}{\beta} & S,
    \end{tikzcd}
\end{equation*}
and let $i_Z\colon \Q\times_S Z\hookrightarrow \P_S(\E)\times_S Z$ be the embedding. By Definition \ref{spinordef}, we have
\begin{equation}\label{spinoronz}
    0 \to \beta_{\E}^*\O_{\P_S(\E)/S}(-1)\otimes \pi_Z^*\I_{n-1}^{\R_Z} \to  \pi_Z^*\I_{n}^{\R_Z} \to i_{Z*} \cS_n^{\R_Z} \to 0.
\end{equation}
Let  $z=[L]\in Z$ be a geometric point represented by a line $L$ in the fiber $\Q_{\beta(z)}$. Then the spinor sheaf $\cS_{0}^{\R_Z}$ restricted at $z$ is the line bundle (resp., rank $1$ reflexive sheaf) $\O_{\Q_{\beta(z)}}(L)$ when $\Q_{\beta(z)}$ is smooth (resp., of corank $1$). Therefore, $M$ is the projectivization of $p_{Z*}\cS_{0}^{\R_Z}$. Applying $\pi_{Z*}$ to the sequence (\ref{spinoronz}), we get $\I_n^{\R_Z} \cong p_{Z*}\cS_{n}^{\R_Z}$. Hence, $M \cong \P_Z(\I_0^{\R_Z})$.

From Lemma \ref{mulisolem} (2), we get that $\sEnd(\I_n^{\R_Z})$ are isormorphic as sheaves of algebras for all $n$. Thus, $\I_n^{\R_Z}$ for different $n$ differ only by tensoring with a line bundle. This proves (1).

(2) Lemma 4.2 in \cite{kuzcubic4} shows that the push-forward $\beta_*$ of the Azumaya algebra corresponding to the smooth conic bundle $\tau\colon M\to \wt{S}\cong Z$ is isomorphic to $\B_0$. More specifically, the left $\beta^*\B_0$-module structure of $\I_n^{\R_Z}$ gives $\beta^*\B_0\to \sEnd(\I_n^{\R_Z})$  and the map adjoint to it induces $\B_0 \cong \beta_*\sEnd(\I_n^{\R_Z})$. 
\end{proof}

For the rest of the section, we will show that Lemma \ref{sdb0lem} (2) still holds when $p\colon \Q\to S$ does not have simple degeneration. This is given by Corollary \ref{genb0cor}.

Assume that $p\colon \Q\to S$ has a smooth section and let $\N$ be the corresponding regular isotropic sub line bundle. Denote $\N^\perp/\N$ by $\bar{\E}$. Let $\bar{q}\colon \bar{\E} \to \L$ be the hyperbolic reduction of $q$ with respect to $\N$. By construction, each point on $\P_S(\bar{\E})$ corresponds to a line in the fiber of $\P_S(\N^\perp)\to S$ that intersects $\P_S(\N)$. Hence,
\begin{equation}\label{zeqhr}
    Z\cong \{\bar{q}=0\} \subset \P_S(\bar{\E})
\end{equation}
is the hyperbolic reduction of $\Q$. The short exact sequence
\begin{equation*}
    0\to \N\to \N^\perp \to \bar{\E}\to 0
\end{equation*}
induces inclusions $\N\otimes \bar{\E}\subset \Lambda^2 \N^\perp \subset \Lambda^2 \E$. Under the isomorphism $\P_S(\bar{\E}) \cong \P_S(\N\otimes \bar{\E}) \subset \P_S(\Lambda^2 \E)$, we have
\begin{equation}\label{detr}
    \det(\R_Z) \cong \beta^*\N \otimes \O_{Z/S}(-1),
\end{equation}
where $\O_{Z/S}(-1)$ is the restriction of $\O_{\P_S(\bar{\E})/S}(-1)$.

\begin{lemma}\label{genb0lem}
The following properties about $\beta\colon Z\to S$ hold.

(1) Let $Z'=\beta^{-1}(S_2)$ and let $\bar{\pi}'=\beta|_{Z'}\colon Z'\to S_2$. Denote $\bar{\E}|_{S_2}$ by $\bar{\E_2}$ and $\I_0^{\R_Z}|_{Z'}$ by $\I_0^{\R_{Z'}}$. Then $Z'\cong \P_{S_2}(\bar{\E_2})$ and
\begin{equation}\label{i0dec}
   \I_0^{\R_{Z'}} \cong \O_{Z'/S}(-1)\otimes \bar{\pi}'^* \left( (\N \otimes \L^\vee)|_{S_2} \right) \oplus \bar{\pi}'^* \left( (\det(\E)\otimes (\L^\vee)^2)|_{S_2} \right).
\end{equation}

(2) Assume $S_2\neq S$. There is an exact sequence
\begin{equation*}
    0\to \O_{\P_S(\bar{\E})/S}(-2)\otimes \bar{\pi}^*\L^\vee \to \O_{\P_S(\bar{\E})} \to \O_Z\to 0,
\end{equation*}
where $\bar{\pi}\colon \P_S(\bar{\E})\to S$ is the projection map.

(3) Assume $S_2\neq S$. Then $R^1\beta_*\sEnd(\I_0^{\R_Z})=0$, $\beta_*\sEnd(\I_0^{\R_Z})$ is locally free of rank $8$, and $\det (\beta_*\sEnd(\I_0^{\R_Z})) \cong \det(\B_0)$.
\end{lemma}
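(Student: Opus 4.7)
The core observation is that $\bar{q}$ vanishes identically on $S_2$: at a geometric point $s\in S_2$, the rank-$2$ bundle $\ker(b_q)_s$ meets $\N_s$ trivially by regularity of $\N$, hence lies inside $\N_s^\perp$ and projects isomorphically onto $\bar{\E}_s$. Combined with (\ref{zeqhr}) this gives $Z'\cong\P_{S_2}(\bar{\E_2})$. For the decomposition of $\I_0^{\R_{Z'}}$, I would use the Clifford filtration $F_0\subset F_2\subset F_4=\B_0$ from (\ref{b01fil}): it induces a two-step filtration on $\I_0^{\R_{Z'}}$ whose subsheaf $F_2\cap\I_0^{\R_{Z'}}$ equals $\det\R_{Z'}\otimes\L^\vee = \O_{Z'/S}(-1)\otimes\bar{\pi}'^*((\N\otimes\L^\vee)|_{S_2})$ by (\ref{detr}), while a local Clifford-algebra computation (using the vanishing of $\bar{q}$ over $S_2$) identifies the graded quotient as $\bar{\pi}'^*((\det\E\otimes(\L^\vee)^2)|_{S_2})$ inside $F_4/F_2\cong\det\E\otimes(\L^\vee)^2$. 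The resulting extension splits because its class lies in an $\Ext^1$ of line bundles pulled back from $S_2$ twisted by $\O_{Z'/S}(-1)$, which vanishes by the projection formula and $R\bar{\pi}'_*\O_{Z'/S}(-1)=0$ on the $\P^1$-bundle $\bar{\pi}'\colon Z'\to S_2$.

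\textbf{Part (2).} By (\ref{zeqhr}), $Z$ is the zero locus in $\P_S(\bar{\E})$ of the section $s_{\bar{q}}$ associated to $\bar{q}$ via (\ref{qsec}). Since $S$ is integral, so is $\P_S(\bar{\E})$; the hypothesis $S_2\neq S$ gives $\bar{q}\neq 0$, hence $s_{\bar{q}}$ is a nonzero section of a line bundle on an integral scheme and thus a regular section. Its Koszul resolution is the claimed short exact sequence.

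\textbf{Part (3).} The left $\beta^*\B_0$-action on $\I_0^{\R_Z}$ yields a natural map $\Psi\colon\B_0\to\beta_*\sEnd(\I_0^{\R_Z})$, which is an isomorphism over $S\setminus S_2$ by Lemma \ref{sdb0lem}(2). To extend to all of $S$, my plan is to lift the presentation of $\I_0^{\R_Z}$ from Lemma \ref{coklem} to the ambient $\P_S(\bar{\E})$: define $\widetilde{\R}\subset\bar{\pi}^*\E$ as the preimage of $\O_{\P(\bar{\E})/S}(-1)\subset\bar{\pi}^*\bar{\E}$ under $\bar{\pi}^*\N^\perp\twoheadrightarrow\bar{\pi}^*\bar{\E}$, and take $\widetilde{\I}$ to be the cokernel of $\bar{\pi}^*\B_{-1}\otimes\widetilde{\R}\to\bar{\pi}^*\B_0$, so that $\widetilde{\I}|_Z$ is an invertible twist of $\I_0^{\R_Z}$ and $\sEnd(\widetilde{\I}|_Z)\cong\sEnd(\I_0^{\R_Z})$. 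Tensoring the SES from Part (2) with $\sEnd(\widetilde{\I})$ and applying $R\bar{\pi}_*$, the projection formula together with the cohomology of $\O_{\P_S(\bar{\E})/S}(n)$ on the $\P^1$-bundle $\bar{\pi}$ should give both $R^1\beta_*\sEnd(\I_0^{\R_Z})=0$ and local freeness of rank $8$. The determinant equality $\det\beta_*\sEnd(\I_0^{\R_Z})\cong\det\B_0$ then follows by combining Part (1)'s explicit fiberwise computation with the isomorphism $\Psi$ on the dense open $S\setminus S_2$. The main obstacle is the ambient construction and its analysis: $\widetilde{\R}$ fails to be isotropic on $\P_S(\bar{\E})$ (its failure of isotropy being controlled precisely by $s_{\bar{q}}$, the defining section of $Z$), so the interplay between $\widetilde{\I}$, the Clifford filtration (\ref{b01fil}), and the SES of Part (2) requires careful bookkeeping.
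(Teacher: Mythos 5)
Parts (1) and (2) are essentially correct and follow the paper's own route. For (1), your appeal to $R\bar{\pi}'_*\O_{Z'/S}(-1)=0$ via the projection formula is an equivalent way of phrasing what the paper does via the semiorthogonal decomposition (\ref{sodz'}); both prove the vanishing of the relevant $\Ext^1$ and hence the splitting of (\ref{i0fil}) over $Z'$. Part (2) matches the paper: since $S$ (hence $\P_S(\bar{\E})$) is integral and $\bar{q}\neq 0$, the defining section is regular and the Koszul sequence is exact on the left.

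Part (3), however, proposes a genuinely different construction that has a real gap, not just a bookkeeping issue. You define $\widetilde{\I}$ as the cokernel of the Clifford multiplication $\bar{\pi}^*\B_{-1}\otimes\widetilde{\R}\to\bar{\pi}^*\B_0$ on all of $\P_S(\bar{\E})$ and then intend to tensor the Koszul resolution from Part (2) by $\sEnd(\widetilde{\I})$ and push forward. But $\widetilde{\R}$ fails to be isotropic precisely off $Z$, and at any point where $\widetilde{\R}$ contains a non-isotropic vector $w$, the element $w/q(w)\in\B_{-1}$ satisfies $(w/q(w))\cdot w = 1$, so the image of $\bar{\pi}^*\B_{-1}\otimes\widetilde{\R}\to\bar{\pi}^*\B_0$ is all of $\bar{\pi}^*\B_0$ there. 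Consequently $\widetilde{\I}$ is not a vector bundle on $\P_S(\bar{\E})$ at all — it is a sheaf supported schematically on $Z$ (an ambient avatar of $i_{Z*}\I_0^{\R_Z}$, up to twist). So $\sEnd_{\O_{\P_S(\bar\E)}}(\widetilde{\I})$ is again supported on $Z$, and the proposed tensoring of the Koszul resolution by it does not give a resolution of $\sEnd(\I_0^{\R_Z})$ on $Z$; it produces Tor terms you have not controlled. The paper avoids this entirely: it works directly on $Z$, dualizes $\I_0^{\R_Z}$ via Lemma \ref{clidealdual} to produce the explicit three-step filtration (\ref{endfil}) of $\sEnd(\I_0^{\R_Z})$ by line bundles $\O_Z$ and $\O_{Z/S}(\pm1)$ (twisted by pullbacks), and then uses the Koszul sequence from Part (2) to compute $R^\bullet\beta_*\O_{Z/S}(k)$ for $k=-1,0,1$ directly, from which both $R^1\beta_*\sEnd(\I_0^{\R_Z})=0$ and the rank $8$ follow.

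The determinant step also has a gap. You propose to deduce $\det\beta_*\sEnd(\I_0^{\R_Z})\cong\det\B_0$ from the fiberwise description in (1) plus the isomorphism $\Psi$ over the dense open $S\setminus S_2$. This only shows the two determinants differ by an effective Cartier divisor supported on $S_2$, and pins it down to zero only if $\codim(S_2)\geqslant 2$ or $S$ is proper — precisely the special cases the paper singles out in the remark following Corollary \ref{genb0cor}. In the general case ($S_2$ possibly of codimension $1$) the paper must, and does, compute $\det(\beta_*\O_{Z/S}(k))$ explicitly for $k=-1,0,1$ from the Koszul resolution and Serre duality on the $\P^1$-bundle $\bar{\pi}$, then assemble $\det\beta_*\sEnd(\I_0^{\R_Z})$ from the filtration (\ref{endfil}) and match it against the filtration (\ref{b01fil}) of $\B_0$. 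Your argument, as sketched, cannot reach the general statement.
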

\begin{proof}
(1) Since $\bar{q}|_{S_2}=0$, we have $Z'\cong \P_{S_2}(\bar{\E_2})$. The filtration (\ref{b01fil}) of $\beta^*\B_0$ also induces one for $\I_0^{\R_Z}$, and we have
\begin{equation}\label{i0fil}
    0\to \det(\R_Z) \otimes \beta^*\L^\vee \to \I_0^{\R_Z} \to \beta^*(\det(\E)\otimes (\L^\vee)^2) \to 0
\end{equation}
as well as the same for $\I_0^{\circ \R_Z}$. Recall that $\det(\R_Z) \cong \beta^*\N \otimes \O_{Z/S}(-1)$ from (\ref{detr}). Since $\bar{\pi}'\colon Z'\to S_2$ is a $\P^1$-bundle, there is a semiorthogonal decomposition
\begin{equation}\label{sodz'}
    \D(Z')=\langle \bar{\pi}'^{*}\D(S_2) \otimes \O_{Z'/S}(-1), \bar{\pi}'^{*}\D(S_2)\rangle. 
\end{equation}
This implies that
\begin{equation*}
    \Ext^1_{Z'}(\bar{\pi}'^* (\det(\E)\otimes (\L^\vee)^2)|_{S_2}, \O_{Z'/S}(-1)\otimes \bar{\pi}'^*(\N \otimes \L^\vee)|_{S_2})=0.
\end{equation*}
Thus, the sequence (\ref{i0fil}) splits after restricting to $Z'$, and we get (\ref{i0dec}).

(2) Equation (\ref{zeqhr}) implies that there is the right exact sequence
\begin{equation*}
    \O_{\P_S(\bar{\E})/S}(-2)\otimes \bar{\pi}^*\L^\vee \to \O_{\P_S(\bar{\E})} \to \O_Z\to 0
\end{equation*}
When $S_2\neq S$, we have $\bar{q}\neq 0$, which means that the kernel of the first map is torsion. Since $S$ is integral, the kernel has to be zero. 

(3) By Lemma \ref{clidealdual}, (1) we have 
\begin{align*}
    (\I_0^{\R_Z})^\vee  & \cong \I_2^{\circ \R_Z} \otimes \det(\R_Z^\vee) \otimes \beta^*(\det(\E^\vee) \otimes \L^2) \\
         & \cong \I_0^{\circ \R_Z} \otimes \det(\R_Z^\vee) \otimes \beta^*(\det(\E^\vee) \otimes \L^3).
\end{align*}
Combining it with (\ref{i0fil}) and (\ref{detr}), we have
\begin{equation}\label{endfil}
    \begin{split}
        & 0\to \I_0^{\circ \R_Z} \otimes \beta^*(\det(\E^\vee)\otimes \L^2) \to \sEnd(\I_0^{\R_Z}) \to \I_0^{\circ \R_Z}\otimes \det(\R_Z^\vee) \otimes \beta^*\L \to 0,\\
        & 0\to \O_{Z/S}(-1)\otimes \beta^*(\N\otimes \det(\E^\vee)\otimes \L) \to \I_0^{\circ \R_Z} \otimes \beta^*(\det(\E^\vee)\otimes \L^2) \to \O_Z \to 0,\\
        & 0 \to \O_Z\to \I_0^{\circ \R_Z}\otimes \det(\R_Z^\vee) \otimes \beta^*\L \to \O_{Z/S}(1)\otimes \beta^*(\N^\vee \otimes \det(\E)\otimes \L^\vee) \to 0.
    \end{split}
\end{equation}
Let $k=-1,0,1$. From the short exact sequence from (2) and the facts that $\bar{\pi}_*\O_{\P_S(\bar{\E})/S}(k-2) =0$ and $R^{\geqslant 2}\bar{\pi}_*=0$, we induce
\begin{multline*}
    0 \to \bar{\pi}_*\O_{\P_S(\bar{\E})/S}(k) \to \beta_*\O_{Z/S}(k) \to R^1 \bar{\pi}_*\O_{\P_S(\bar{\E})/S}(k-2) \otimes \L^\vee \\ \to R^1\bar{\pi}_*\O_{\P_S(\bar{\E})/S}(k) \to R^1\beta_*\O_{Z/S}(k) \to 0.
\end{multline*}
In addition, from $R^1\bar{\pi}_* \O_{\P_S(\bar{\E})/S}(k)=0$, we deduce $R^1\beta_*\O_{Z/S}(k)=0$. By Serre duality, we have
\begin{equation*}
    R^1 \bar{\pi}_*\O_{\P_S(\bar{\E})/S}(k-2) \cong (\bar{\pi}_*\O_{\P_S(\bar{\E})/S}(-k))^\vee \otimes \det(\bar{\E}).
\end{equation*}
Hence, $\beta_*\O_{Z/S}(k)$ are locally free of rank $2$ and
\begin{equation*}
    \det(\beta_*\O_{Z/S}(k))
    \cong \left\{
    \begin{array}{ll}
    \det(\bar{\E}^\vee),     &  k=1\\
    \det(\bar{\E})\otimes \L^\vee,     &  k=0\\
    \det(\bar{\E})^3\otimes (\L^\vee)^2,    & k=-1
    \end{array}
    \right..
\end{equation*}

On the other hand, we have similar long exact sequences induced by sequences (\ref{endfil}). From them, we deduce that $R^1\beta_*\sEnd(\I_0^{\R_Z})=0$ and $\beta_*\sEnd(\I_0^{\R_Z})$ is locally free of rank $8$. Note that
\begin{equation*}
    \det(\E) \cong \N \otimes \sHom(\N,\L) \otimes \det(\bar{\E}) \cong \det(\bar{\E}) \otimes \L.
\end{equation*}
Then
\begin{equation*}
    \det(\beta_*\sEnd(\I_0^{\R_Z})) \cong \det(\E)^4 \otimes (\L^\vee)^8.
\end{equation*}
Lastly, $\B_0$ has a filtration (\ref{b01fil}) with factors $\O_S, \Lambda^2\E \otimes \L^\vee, \det(\E) \otimes (\L^\vee)^2$. Since $\det(\Lambda^2 \E) \cong \det(\E)^3$, we have
\begin{align*}
    \det(\B_0) & \cong \det(\Lambda^2\E \otimes \L^\vee) \otimes \det(\E) \otimes (\L^\vee)^2\\
    & \cong \det(\E)^4 \otimes (\L^\vee)^8\\
    & \cong \det(\beta_*\sEnd(\I_0^{\R_Z})).
\end{align*}
\end{proof}

\begin{cor}\label{genb0cor}
Assume that the flat quadric surface bundle $p\colon \Q\to S$ has a smooth section and $S_2\neq S$. Then $\B_0 \cong R\beta_* \sEnd(\I_n^{\R_Z}) \cong \beta_* \sEnd(\I_n^{\R_Z})$ as sheaves of algebras for all $n\in \ZZ$.
\end{cor}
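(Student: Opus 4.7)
The plan is to construct a natural $\O_S$-algebra morphism $\phi\colon \B_0 \to \beta_*\sEnd(\I_0^{\R_Z})$ and prove it is an isomorphism, by combining Lemma~\ref{sdb0lem} on the open locus of simple degeneration with the rank and determinant computations in Lemma~\ref{genb0lem}(3). First, Lemma~\ref{mulisolem}(2) identifies $\sEnd(\I_n^{\R_Z})\cong \sEnd(\I_0^{\R_Z})$ canonically as sheaves of algebras on $Z$ for every $n$, and Lemma~\ref{genb0lem}(3) together with the fact that $\beta$ has fibers of dimension at most one gives $R\beta_*\sEnd(\I_0^{\R_Z}) = \beta_*\sEnd(\I_0^{\R_Z})$. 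So it suffices to exhibit an algebra isomorphism $\B_0\cong \beta_*\sEnd(\I_0^{\R_Z})$.

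To construct $\phi$, note that the Clifford ideal $\I_0^{\R_Z}\subset \beta^*\B_0$ is a left $\beta^*\B_0$-submodule; left multiplication therefore provides an $\O_Z$-algebra morphism $\beta^*\B_0\to \sEnd(\I_0^{\R_Z})$, whose adjoint under $\beta^*\dashv \beta_*$ is $\phi$. On the open subscheme $U = S\setminus S_2$ the bundle $p|_U$ has simple degeneration and inherits the smooth section, so Lemma~\ref{sdb0lem}(2) applied to $p|_U$ shows that $\phi|_U$ is an isomorphism. By Lemma~\ref{genb0lem}(3) both $\B_0$ and $\beta_*\sEnd(\I_0^{\R_Z})$ are locally free of rank $8$ on all of $S$ with isomorphic determinants, and $\phi$ is injective because $\ker\phi$ is supported on $S_2$ while $\B_0$ is torsion-free.

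I expect the main obstacle to be surjectivity at points of $S_2$. My approach would be to fix the canonical identification $\det\B_0 \cong \det\beta_*\sEnd(\I_0^{\R_Z})$ implicit in the proof of Lemma~\ref{genb0lem}(3) and interpret $\det\phi$ as a section of $\O_S$. This section is a unit on the dense open $U$, and the remaining task is to verify it is a unit on $S_2$ as well. A fiberwise check at each closed point $s\in S_2$ would compare the $8$-dimensional $\kk(s)$-algebras $\B_0\otimes \kk(s)$ (the degenerate even Clifford algebra of the rank $4$, corank $2$ quadratic form $q|_s$) and $(\beta_*\sEnd(\I_0^{\R_Z}))\otimes\kk(s)$ (computed via the filtration sequences (\ref{endfil}) and the splitting (\ref{i0dec}) of $\I_0^{\R_{Z'}}$ from Lemma~\ref{genb0lem}(1)); isomorphy of the induced algebra map between them, together with Nakayama, would upgrade surjectivity on $U$ to surjectivity on all of $S$.
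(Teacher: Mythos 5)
Your proposal is essentially the approach sketched in the Remark immediately following this corollary in the paper, and the paper explicitly notes that this determinant argument only succeeds when $S_2\subset S$ has codimension at least $2$ or when $S$ is proper.  The gap is exactly the step you flag as "the main obstacle," and the fiberwise check you propose does not close it as stated.

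Concretely: since $\bar{q}$ vanishes on $S_2$, the scheme $Z\to S$ is \emph{not} flat over points of $S_2$, and hence $\sEnd(\I_0^{\R_Z})$ is not flat over $\O_S$.  So the natural base-change map
$\bigl(\beta_*\sEnd(\I_0^{\R_Z})\bigr)\otimes\kk(s)\to H^0\bigl(Z_s,\sEnd(\I_0^{\R_Z})|_{Z_s}\bigr)$
is not an isomorphism for $s\in S_2$.  Indeed, using (\ref{i0dec}) one has $\I_0^{\R_Z}|_{Z_s}\cong\O_{\P^1}(-1)\oplus\O_{\P^1}$, so
$\sEnd(\I_0^{\R_Z})|_{Z_s}\cong\O_{\P^1}^{\oplus 2}\oplus\O_{\P^1}(1)\oplus\O_{\P^1}(-1)$,
whose $H^0$ is only $4$-dimensional, whereas the left side is $8$-dimensional by Lemma~\ref{genb0lem}(3).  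The missing four dimensions come from the derived tensor
$\O_Z\otimes^{\LL}_{\O_S}\kk(s)\cong\O_{\P^1}\oplus\O_{\P^1}(-2)\otimes\L_s^\vee[1]$,
forced by the two-term resolution of $\O_Z$ in Lemma~\ref{genb0lem}(2), and these produce an $H^1$ contribution to the fiber.  Your comparison of $\B_0\otimes\kk(s)$ with "the fiber computed via the filtration and the splitting" would therefore require a genuine derived base-change computation, not a naive $H^0$ calculation, and without it the Nakayama step does not get off the ground.  Separately, $\{\det(g)=0\}$ is a Cartier divisor (its zero locus is pure codimension one or empty), so its containment in $S_2$ gives no information when $S_2$ contains a divisor; this is precisely why the paper restricts the determinant argument to the extra hypotheses.

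For contrast, the paper's actual proof avoids any base change on $S_2$ entirely.  It works Zariski-locally on $S$, where one can split $\E\cong(\N\oplus\sHom(\N,\L))\perp\bar{\E}$, write $\B_0$ and $\I_0^{\R_Z}$ in $2\times 2$ block form over the generalized Clifford algebras of the two orthogonal summands, and reduce to Lemma~\ref{dim0lem} (the rank-two, possibly non-primitive case), which computes all the relevant $R\beta_*\sHom$ terms directly.  This is the step your plan does not replace.
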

\begin{proof}
By Lemma \ref{mulisolem} (2), it suffices to prove it for $n=0$. The left $\beta^*\B_0$-module structure of $\I_0^{\R_Z}$ gives
\begin{equation}\label{endmodmap}
    f\colon \beta^*\B_0 \cong L\beta^*\B_0 \to \sEnd(\I_0^{\R_Z})
\end{equation}
and it induces
\begin{equation*}
    g\colon \B_0\to R\beta_*\sEnd(\I_0^{\R_Z}).
\end{equation*}
We only need to show that $g$ is locally an isomorphism.

Locally, we have
\begin{equation*}
    \E \cong (\N\oplus \sHom(\N,\L)) \perp \bar{\E},
\end{equation*}
where $\perp$ is the orthogonal sum of quadratic forms and the first quadratic form is given by the evaluation map $\ev_{\N}\colon \N\oplus \sHom(\N,\L)\to \L$. Denote by $\B^{\N}\cong \bigoplus_{n\in \ZZ} \B_n^{\N}$ and $\bB \cong \bigoplus_{n\in \ZZ} \bB_n$ the generalized Clifford algebras of $\ev_{\N}$ and $\bar{q}$, respectively. Then we have
\begin{equation*}
    \B_0\cong \B_0^{\N}\otimes \bB_0\oplus \B_1^{\N}\otimes \bB_1 \otimes \L^\vee\cong \bB_0\oplus \bB_0 \oplus \N\otimes \L^\vee \otimes \bB_1 \oplus \N^\vee\otimes \bB_1.
\end{equation*}
In addition, locally $\R_Z \cong \beta^*\N\oplus \O_{Z/S}(-1)$ and
\begin{equation*}
    \I_0^{\R_Z} \cong \I_0^{\O_{Z/S}(-1)} \oplus \beta^*(\N\otimes\L^\vee)\otimes \I_1^{\O_{Z/S}(-1)}.
\end{equation*}
The left $\beta^*\B_0$-module structure of $\I_0^{\R_Z}$ can be seen by writing $\B_0$ and $\I_0^{\R_Z}$ in block matrices
\begin{equation*}
    \B_0=\left(
    \begin{array}{cc}
    \bB_0     & \N^\vee\otimes \bB_1 \\
    \N\otimes \L^\vee \otimes \bB_1     & \bB_0
    \end{array}
    \right), \quad
    \I_0^{\R_Z}=\left(
    \begin{array}{c}
    \I_0^{\O_{Z/S}(-1)}\\
    \beta^*(\N\otimes\L^\vee)\otimes \I_1^{\O_{Z/S}(-1)}
    \end{array}
    \right).
\end{equation*}
By Lemma \ref{dim0lem}, we have $\bB_{n-m}\cong R\beta_*\sHom(\I_m^{\O_{Z/S}(-1)}, \I_n^{\O_{Z/S}(-1)})$. This implies that $g$ is an isomorphism. 
\end{proof}

\begin{remark}
The corollary has an easier proof if $S_2\subset S$ has codimension at least $2$ or $S$ is proper and integral. The proof goes as follows.

By Lemma \ref{genb0lem} (3), we have $R\beta_*\sEnd(\I_0^{\R_Z}) \cong \beta_*\sEnd(\I_0^{\R_Z})$ is locally free and $\det(\B_0) \cong \det(\beta_*\sEnd(\I_0^{\R_Z}))$. Hence, $\det(g)\in \Gamma(S,\O_S)$. By Lemma \ref{sdb0lem} (2), the map $g$ induced in the proof of the previous corollary is an isomorphism on $S\backslash S_2$ and thus $\det(g)|_{S\backslash S_2}$ is a unit. If $S$ is proper and integral, then $\det(g)$ is a non-zero constant and $g$ is an isomorphism on $S$. On the other hand, $\{\det(g)=0\}\subset S$ is either empty or has codimension $1$. Then $g$ is an isomorphism on $S$ if $S_2\subset S$ has codimension at least $2$.
\end{remark}

We give an explicit description of the map $\beta\colon Z\to S$ for the universal quadric surface bundle with a smooth section.

\begin{ex}\label{univqsb}
The universal family $\Q$ of quadric surface bundles with a smooth section is parametrized by $S \cong \AA^3 \cong \Spec(\kk[a,b,c])$ and the quadratic form is
\begin{equation*}
    q(x)=x_1x_2+  ax_3^2 +bx_3x_4 +c x_4^2.
\end{equation*}
The smooth section is given by $\{x_2=x_3=x_4=0\}$ or $\{x_1=x_3=x_4=0\}$. The hyperbolic reduction with respect to either smooth section is $\bar{q}=ax_3^2 +bx_3x_4 +c x_4^2$. Then 
\begin{equation*}
    Z \cong \Proj (\frac{\kk[a,b,c,x_3, x_4]}{ax_3^2 +bx_3x_4 +c x_4^2}), \quad \wt{S} \cong \Spec(\frac{\kk[a,b,c,d]}{d^2-b^2+4ac}),
\end{equation*}
where $Z$ is the hyperbolic reduction as well as the scheme parametrizing lines that intersect the smooth section, and $\wt{S}$ is the double cover over $\AA^3$. Let $\sigma$ be the involution of the double cover $\wt{S}\to \AA^3$. Then there is a factorization
\begin{equation*}
    \beta: Z\xrightarrow{h} \wt{S} \to S \cong \AA^3
\end{equation*}
where $h$ and $\sigma \circ h$ are the two minimal resolutions of the affine nodal quadric threefold $\wt{S}$. In addition, $S_2\cong \{a=b=c=0\}$ is the origin and has codimension $3$ in $S\cong \AA^3$.
\end{ex}
%-----------------------------------------------------------------
\section{Quadric surface bundles with a smooth section}\label{smoothsec}
Let $p\colon \Q\to S$ be a flat quadric surface bundle with a smooth section and let $\A_{\Q}$ be its residual category. In this section, we prove that $\A_{\Q}$ is geometric. We give two proofs where the easier proof is described in Theorem \ref{main1} and the harder proof in Theorem \ref{main2}. The harder proof in addition gives an explicit description of the Fourier-Mukai kernels of the embedding functors $\Psi_n\colon \A_{\Q}\to \D(\Q)$. 

Firstly, we provide a type of mutations for derived categories of not necessarily smooth schemes. Recall that a morphism is \textit{perfect} if it is pseudo-coherent and has finite Tor-dimension. A proper \textit{local complete intersection} morphism (locally factors as a Koszul-regular closed immersion followed by a smooth morphism) is perfect and it has \textit{invertible} relative dualizing complex (a degree shift of a line bundle); see Example 3.2 in \cite{jquot-1}.
\begin{lemma}\label{mulem}
Let $f\colon X\to S$ be a proper and perfect morphism of noetherian schemes. Denote by $\omega_f=f^!(\O_S)$ the relative dualizing complex. Assume $\omega_f$ is invertible, e.g., when $f$ is a Gorenstein or a proper local complete intersection morphism. Denote by $S_{X/S}= -\otimes \omega_f$ the equivalence functor on $\D(X)$ (This is the relative Serre functor on $\DD^{\perf}(X)$). Then $f^! \cong S_{X/S}\circ Lf^*$ on $\D(X)$. 

(1) Assume there is an $S$-linear semiorthogonal decomposition
\begin{equation}\label{ssod}
    \D(X) =\langle \A_1, \A_2 \rangle
\end{equation}
where $S$-linear means that $\F\otimes f^*\G \in \A_i$ for every $\F\in \A_i$, $\G\in \DD^{\perf}(S)$, the derived category of perfect complexes, and $i=1,2$. Assume that $\A_1$ or $\A_2$ is equivalent to $Lf^*\D(S)\otimes T$ where $T\in \DD^{\perf}(X)$ is relative exceptional, i.e., $Rf_*R\sHom_{\O_X}(T, T)\cong \O_S$. Then both $\A_1$ and $\A_2$ are admissible, i.e., inclusion functors have both left and right adjoints, and there is an $S$-linear semiorthogonal decomposition
\begin{equation}\label{ssodmu}
    \D(X) =\langle \A_2\otimes \omega_f, \A_1 \rangle.
\end{equation}

(2) More generally, assume that there is an $S$-linear semiorthogonal decomposition
\begin{equation*}
    \D(X) =\langle \A_1, \dots, \A_n \rangle.
\end{equation*}
Assume that there exists some $i_0$ such that for all $i\neq i_0$, we have $\A_i\cong Lf^*\D(S)\otimes T_i$ for relative exceptional $T_i\in \DD^{\perf}(X)$. Then each $\A_i, 1\leqslant i\leqslant n$ is admissible and there is an $S$-linear semiorthogonal decomposition
\begin{equation}\label{nssodmu}
    \D(X) =\langle \A_n\otimes \omega_f, \A_1 \dots, \A_{n-1} \rangle.
\end{equation}
\end{lemma}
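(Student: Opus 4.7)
\emph{The plan} is to base the argument on three ingredients: the identification $f^!\cong S_{X/S}\circ Lf^*$, explicit adjoints for the inclusion of the distinguished component, and Grothendieck duality. The identification is immediate: since $f$ is proper and perfect and $\omega_f$ is invertible, the projection formula for $f^!$ gives $f^!(\G)\cong f^!(\O_S)\otimes Lf^*\G=\omega_f\otimes Lf^*\G=S_{X/S}(Lf^*\G)$.

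For part (1), let $\Phi\colon \D(S)\to \D(X)$ be the functor $\Phi(\G)=Lf^*\G\otimes T$. Its right adjoint is $\Phi^R\cong Rf_*(-\otimes T^\vee)$ by composing $Lf^*\dashv Rf_*$ with $-\otimes T\dashv R\sHom(T,-)$; using $Rf_*\dashv f^!$ and the preliminary identification, the left adjoint is $\Phi^L\cong Rf_*(-\otimes T^\vee\otimes \omega_f)$. The projection formula and the relative exceptionality $Rf_*R\sHom(T,T)\cong \O_S$ yield $\Phi^R\Phi\cong \id$, so $\Phi$ is fully faithful and $\Phi\D(S)=Lf^*\D(S)\otimes T$ is admissible. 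Assume without loss of generality that $\A_2=\Phi\D(S)$; the case where $\A_1$ is distinguished is symmetric, using $\Phi^L$ in place of $\Phi^R$. For semi-orthogonality of $\langle \A_2\otimes\omega_f,\A_1\rangle$, take $a\in \A_1$ and $b=\Phi\G\in \A_2$. Using the preliminary identification and Grothendieck duality,
\[
\Hom_X(a,b\otimes \omega_f)=\Hom_X(a\otimes T^\vee,f^!\G)=\Hom_S(Rf_*(a\otimes T^\vee),\G)=\Hom_S(\Phi^R a,\G),
\]
which vanishes because $a\in \A_2^\perp$ forces $\Phi^R a=0$ by adjunction. For generation, note that $\A_2\otimes \omega_f=Lf^*\D(S)\otimes (T\otimes\omega_f)$ is again of the distinguished form and hence admissible; its left admissibility produces an SOD $\D(X)=\langle \A_2\otimes \omega_f,\cU\rangle$ with $\cU$ the left orthogonal, and the semi-orthogonality just verified gives $\A_1\subseteq \cU$. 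The reverse containment follows by running the adjunction chain in reverse: $Y\in \cU$ forces $\Phi^R Y=0$, hence $Y\in \A_2^\perp=\A_1$. This yields $\D(X)=\langle \A_2\otimes \omega_f,\A_1\rangle$, in which $\A_1$ sits as the right-hand component and is therefore right admissible; combined with its left admissibility from the original SOD, $\A_1$ is admissible.

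For part (2), the case $i_0\neq n$ reduces directly to (1): group $\cB=\langle \A_1,\dots,\A_{n-1}\rangle$ and apply (1) to the two-term SOD $\langle \cB,\A_n\rangle$, with the distinguished $\A_n$ on the right. The case $i_0=n$ is more delicate: here $\cB$ is still admissible, being an iterated SOD of admissibles, so it has a right adjoint $\pi_\cB^R$; the calculation of the previous paragraph carries through with $(\cB,\pi_\cB^R)$ in place of $(\Phi\D(S),\Phi^R)$, the key identity $\Hom_X(a,b\otimes \omega_f)\cong \Hom_S(\pi_\cB^R a,\,\cdot\,)$ being obtained by reducing along the distinguished generators $Lf^*\D(S)\otimes T_i$ of $\cB$ and applying the computation above componentwise. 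The main obstacle I anticipate is precisely this $i_0=n$ situation: the absence of a single generating object $T$ forces a careful compatibility check of the adjunction chain with the iterated SOD structure, which I expect to handle by induction on the number of distinguished components of $\cB$.
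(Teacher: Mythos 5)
Your treatment of part (1) is essentially equivalent to the paper's: both verify the orthogonality via Grothendieck--Serre duality and then identify the orthogonal complement, you by constructing explicit adjoints $\Phi^L, \Phi^R$ for the distinguished component, the paper by checking the relative conditions $Rf_*R\sHom(\F_2,\F_1)=0$ directly and appealing to the lemma in \cite{kuzbc}. One small caveat: you treat only the case $\A_2=\Phi\D(S)$ in detail and declare the other case ``symmetric, using $\Phi^L$ in place of $\Phi^R$.'' The argument is not strictly symmetric, because your generation step relies on $\A_2\otimes\omega_f$ being itself of the distinguished form (so that left admissibility is automatic); when $\A_1$ is the distinguished component, you should instead use the left admissibility of $\A_1$ and show $\A_1^\perp=\A_2\otimes\omega_f$ by a Hom computation parallel to yours (it does go through, but needs to be spelled out).

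The real gap is in part (2), case $i_0=n$. The ``key identity'' you write down, $\Hom_X(a,b\otimes\omega_f)\cong \Hom_S(\pi_\cB^R a,\,\cdot\,)$, does not parse: $\pi_\cB^R$ is the right projection onto the admissible subcategory $\cB\subset\D(X)$, so it is an endofunctor of $\D(X)$, not a functor to $\D(S)$, and there is no reason for the displayed $\Hom$ to be computed over $S$ this way. What you actually want is to test semiorthogonality and generation componentwise against the generators: for $a\in\A_i$ with $i<n$ write $a=Lf^*\G\otimes T_i$, then
\[
\Hom_X(a, b\otimes\omega_f)\cong \Hom_S\bigl(\G,\, Rf_*(b\otimes\omega_f\otimes T_i^\vee)\bigr),
\]
and the right-hand side vanishes because $b\in\A_n={}^\perp\cB$ forces $\Hom_S\bigl(Rf_*(b\otimes T_i^\vee\otimes\omega_f),\G\bigr)=\Hom_X(b, Lf^*\G\otimes T_i)=0$; the reverse containment for generation is an analogous dualization using $Rf_*(Y\otimes T_i^\vee)=0$ for $Y\in\cB^\perp$. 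This componentwise verification does close the gap and in fact sidesteps the ``induction on the number of distinguished components'' you anticipated, but you have not carried it out, and you also do not address admissibility of all $\A_i$ in part (2). The paper's route is different and tidier here: it iterates part (1), mutating the distinguished leftmost component to the right (and twisting by $\omega_f^{-1}$) $n-1$ times, which avoids having to re-prove orthogonality and generation for the block $\cB$ altogether. You may wish to adopt the iterative strategy, or else flesh out the componentwise argument precisely.
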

\begin{proof}
The claim that $f^! \cong S_{X/S}\circ Lf^*$ is given in Theorem 3.1 (2) of \cite{jquot-1}.

(1) We will make use of Lemma 2.7 in \cite{kuzbc}, which states that $\A_1, \A_2$ is semiorthogonal, i.e., $\Hom_{\D(X)}(\F_2, \F_1)=0$ for \textbf{all} $\F_i\in \A_i$, $i=1,2$ if and only if $Rf_* R\sHom_{\O_X}(\F_2, \F_1)=0$ for \textbf{all} $\F_i\in \A_i$, $i=1,2$.

We assume that $\A_1\cong Lf^*\D(S)\otimes T$ for $T\in \DD^{\perf}(X)$ relative exceptional. The other case is similar. Recall that for a full subcategory $\A\subset \D(X)$, the \textit{right orthogonal} to $\A$ is
\begin{equation*}
    \A^{\perp} = \{\F\in \D(X)\,|\, \Hom_{\D(X)}(\cK,\F)=0, \forall \cK\in \A\}
\end{equation*}
and the \textit{left orthogonal} to $\A$ is
\begin{equation*}
    {}^{\perp}\A = \{\F\in \D(X)\,|\, \Hom_{\D(X)}(\F, \cK)=0, \forall \cK\in \A\}.
\end{equation*}
From the SOD (\ref{ssod}), we get $\A_2\cong {}^{\perp}\A_1$ and $\A_1\cong \A_2^{\perp}$. The existence of the SOD (\ref{ssodmu}) is equivalent to $S_{X/S}(\A_2) \cong \A_1^{\perp}$.

Firstly, we show $S_{X/S}(\A_2) \subset \A_1^{\perp}$. Let $\F\in \A_2$, $\G\in \D(S)$. Then
\begin{equation*}
    \begin{split}
        Rf_*R\sHom_{\O_X}(Lf^*\G\otimes T, S_{X/S}(\F)) & \cong Rf_*R\sHom_{\O_X}(Lf^*\G, T^\vee \otimes S_{X/S}(\F))\\
        & \cong R\sHom_{\O_S}(\G, Rf_*(T^\vee \otimes S_{X/S}(\F))).
    \end{split}
\end{equation*}
Note that $T$ is a perfect complex. The local Grothendieck-Serre duality gives
\begin{equation*}
    Rf_*(T^\vee \otimes S_{X/S}(\F)) \cong Rf_* R\sHom_{\O_X}(T, S_{X/S}(\F)) \cong R\sHom_{\O_S}(Rf_* R\sHom_{\O_X}(\F, T), \O_S).
\end{equation*}
Since $\F\in \A_2\cong {}^{\perp}\A_1$, we have $Rf_* R\sHom_{\O_X}(\F, T)=0$. Hence, $S_{X/S}(\A_2) \subset \A_1^{\perp}$.

Next, we show $S_{X/S}^{-1}(\A_1^{\perp})\subset \A_2 \cong {}^{\perp}\A_1$. Let $\F\in \A_1^{\perp}$ and $\G\in \D(S)$. The local Grothendieck-Verdier duality gives
\begin{equation*}
    \begin{split}
        Rf_*R\sHom_{\O_X}(S_{X/S}^{-1}(\F), Lf^*\G\otimes T) & \cong Rf_*R\sHom_{\O_X}(\F, S_{X/S}(Lf^*\G\otimes T))\\
        & \cong Rf_*R\sHom_{\O_X}(\F, (f^!\G)\otimes T)\\
        & \cong R\sHom_{\O_S}(Rf_*(\F\otimes T^\vee), \G).
    \end{split}
\end{equation*}
Since $\F\in \A_1^{\perp}$, we have $Rf_*(\F\otimes T^\vee)=0$. Hence, $S_{X/S}^{-1}(\A_1^{\perp})\subset \A_2$. This concludes $S_{X/S}(\A_2) \cong \A_1^{\perp}$.

It remains to show the admissibility of $\A_i, i=1,2$. From the SOD (\ref{ssod}), we get that $\A_1$ is left admissible and $\A_2$ is right admissible. From the SOD (\ref{ssodmu}), we get that $\A_1$ is right admissible. The SOD (\ref{ssodmu}) induces a SOD $\langle \A_2, \A_1\otimes \omega_f^{-1}\rangle$ and thus $\A_2$ is also left admissible.

(2) There are two cases. Let $\A=\langle \A_1, \dots, \A_{n-1}\rangle$ when $i_0\neq n$. Then we can apply (1) to get the SOD (\ref{nssodmu}). When $i_0=n$, we can apply (1) to get a SOD
\begin{equation*}
    \D(X)=\langle \A_2, \dots, \A_n, \A_1\otimes \omega_f^{-1} \rangle.
\end{equation*}
We apply (1) for $n-2$ more times and get
\begin{equation*}
    \begin{split}
        \D(X) & =\langle \A_n, \A_1\otimes \omega_f^{-1}, \dots,  \A_{n-1}\otimes \omega_f^{-1} \rangle\\
        & \cong \langle \A_n\otimes \omega_f, \A_1, \dots,  \A_{n-1}\rangle.
    \end{split}
\end{equation*}
Each $\A_i, 1\leqslant i\leqslant n$ is admissible because we can construct SODs similar to (1) with $\A_i$ being the leftmost or the rightmost component.
\end{proof}

\begin{theorem}\label{main1}
Let $p\colon \Q\to S$ be a flat quadric surface bundle where $S$ is an integral noetherian scheme over $\kk$ with $\cchar(\kk)\neq 2$. Assume that $p$ has a smooth section and the second degeneration $S_2\neq S$. Then there is a semiorthogonal decomposition
\begin{equation}
    \D(\Q) =\langle \D(\bar{\Q}), p^*\D(S), p^*\D(S) \otimes \O_{\Q/S}(1) \rangle
\end{equation}
where $\bar{\Q}$ is the hyperbolic reduction of $\Q$ with respect to the smooth section in Definition \ref{hypreddef}.
\end{theorem}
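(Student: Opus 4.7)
The plan is to follow the strategy sketched in the introduction: use the relative linear projection from the smooth section to identify two different blow-ups, apply the derived blow-up formula of \cite[Theorem~6.11]{jquot-1} to each, and then conclude by mutation.

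Write the smooth section as $\P_S(\N)$ for the regular isotropic sub line bundle $\N\subset\E$. The relative linear projection $\P_S(\E)\dashrightarrow \P_S(\E/\N)$ from $\P_S(\N)$ is resolved by blowing up the section, and, restricting to $\Q$, the resulting morphism $\Bl_{\P_S(\N)}(\Q)\to \P_S(\E/\N)$ contracts the two fiberwise lines of $\Q$ through each section point. These lines correspond exactly to the points of
\[
\bar{\Q}\subset \P_S(\bar{\E})\subset \P_S(\E/\N), \qquad \bar{\E}=\N^\perp/\N,
\]
yielding the isomorphism of diagram (\ref{hrdiag})
\[
\Bl_{\P_S(\N)}(\Q) \;\cong\; \Bl_{\bar{\Q}}(\P_S(\E/\N)).
\]
Both blow-up centers are regular embeddings of codimension $2$: the section $\P_S(\N)\hookrightarrow \Q$ is locally cut out by two coordinates transverse to $\N$ in $\E$ (cf.\ Example~\ref{univqsb}), while $\bar{\Q}$ is a Cartier divisor in the $\P^1$-bundle $\P_S(\bar{\E})$—the reduction $\bar{q}$ is non-zero because $S_2\neq S$ and $S$ is integral—and $\P_S(\bar{\E})$ is itself a Cartier divisor in $\P_S(\E/\N)$.

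Applying \cite[Theorem~6.11]{jquot-1} to the two descriptions of the common blow-up produces $S$-linear semiorthogonal decompositions
\[
\langle \D(S),\,\D(\Q) \rangle \;\cong\; \D(\Bl_{\P_S(\N)}(\Q)) \;\cong\; \langle \D(\bar{\Q}),\,\D(\P_S(\E/\N)) \rangle.
\]
Expanding the left-hand side by the residual category SOD $\D(\Q) = \langle \A_\Q,\,p^*\D(S),\,p^*\D(S)\otimes \O_{\Q/S}(1)\rangle$ and the right-hand side by the projective bundle formula for the $\P^2$-bundle $\P_S(\E/\N)\to S$ yields two four-term $S$-linear SODs of $\D(\Bl_{\P_S(\N)}(\Q))$ in which either $\A_\Q$ or $\D(\bar{\Q})$ appears alongside three admissible pieces, each equivalent to $\D(S)$ after a line-bundle twist.

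The conclusion then follows from a sequence of mutations using Lemma~\ref{mulem}, applied to the proper local complete intersection morphism $\Bl_{\P_S(\N)}(\Q)\to S$ (its relative dualizing complex is invertible since both blow-up centers are regularly embedded in Gorenstein ambient schemes): each of the three $S$-linear pieces on each side has the form $Lf^*\D(S)\otimes T$ for a relative exceptional $T$, so the hypotheses of Lemma~\ref{mulem} are satisfied, and mutating them past $\A_\Q$ and past $\D(\bar{\Q})$ respectively aligns the two SODs and identifies $\A_\Q\cong \D(\bar{\Q})$; restricting this identification from $\D(\Bl_{\P_S(\N)}(\Q))$ gives the claimed SOD of $\D(\Q)$. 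The main obstacle will be the bookkeeping in this mutation step: one must track the exceptional divisors of both blow-ups together with the tautological twists $\O_{\Q/S}(k)$ and $\O_{\P_S(\E/\N)/S}(k)$, compute the shift by the relative dualizing complex at each mutation, and check that the three $S$-linear components on the two sides match so that the remaining pieces $\A_\Q$ and $\D(\bar{\Q})$ are identified.
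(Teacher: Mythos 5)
Your proposal follows essentially the same route as the paper's proof: it uses the two descriptions $\Bl_{\P_S(\N)}(\Q)\cong\Bl_{\bar{\Q}}(\P_S(\E/\N))$ from the relative linear projection (diagram (\ref{hrdiag})), verifies both blow-up centers are regular of codimension $2$ (using $S_2\neq S$ to ensure $\bar q\neq 0$), applies \cite[Theorem~6.11]{jquot-1} to both, and then compares the two resulting four-term $S$-linear SODs of $\D(\Q')$ via Lemma~\ref{mulem}. The paper carries out the mutation bookkeeping explicitly—converting $\bar{j}_*\O_E$ into a line-bundle twist using the divisor relations $H=h+E$, $h=D+E$ and the relative canonical $K_{\Q'/S}=-3h+D$—where you have only sketched it, but the strategy and the key inputs are the same.
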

\begin{proof}
Let $\N$ be the regular isotropic line bundle corresponding to the smooth section of $p$. Remark 2.6 of \cite{ksgroring} provides the following picture:
\begin{equation}\label{hrdiag}
    \begin{tikzcd} 
    E \arrow[hook]{r}{\bar{j}} \arrow{d}[swap]{\bar{f}} & \Q' \arrow{d}[swap]{f} \arrow{rd}{g} & & &\\
    \P_S(\N) \arrow[hook]{r}{j} & \Q  & \P_S(\E/\N) \arrow[hookleftarrow]{r} & \P_S(\N^\perp/\N) \arrow[hookleftarrow]{r} & \bar{\Q}
    \end{tikzcd}
\end{equation}
where $g\circ f^{-1}\colon \Q\dashrightarrow \P_S(\E/\N)$ is the relative linear projection from $\P_S(\N) \cong S$, the scheme $\bar{\Q}$ is the hyperbolic reduction with respect to $\N$,
\begin{equation*}
    \Q' \cong \Bl_{\P_S(\N)}(\Q) \cong \Bl_{\bar{\Q}}(\P_S(\E/\N)),
\end{equation*}
the subscheme $E \cong \P_S(\N^\perp/\N) \subset \Q'$ is the exceptional locus of $f$, the map $\bar{f}=f|_E$, and maps $j, \bar{j}$ are inclusions.

Let $D\subset \Q'$ be the exceptional locus of $g$. Let $H$ and $h$ be the relative hyperplane classes of $\Q$ and $\P_S(\E/\N)$, respectively. Use the same notations for the pull-back classes on $\Q'$. In $\Pic(\Q')/f^*p^*\Pic(S)$, there are relations
\begin{equation}\label{relofdiv}
    \left\{
    \begin{array}{l}
    H = h+E \\
    h = D+E
    \end{array}
    \right..
\end{equation}

Let $\bar{q}\colon \N^\perp/\N \to \L$ be the hyperbolic reduction and recall $\bar{\Q}=\{\bar{q}=0\} \subset \P_S(\N^\perp/\N)$. Since $S_2\neq S$ and $S$ is integral, we have $\bar{q}\neq 0$ and $\bar{\Q}\hookrightarrow \P_S(\E/\N)$ is a regular immersion of codimension $2$. Locally, $\P_S(\N)\subset \Q$ is defined by $\{xy+\bar{q}=0\}$, where $x,y$ are variables for $\N$ and $\sHom(\N,\L)$, respectively. Restricting to $\{x\neq 0\}$, the smooth section is defined by $\{z=w=0\}$, where $z,w$ are variables for $\N^\perp/\N$. Thus, $\P_S(\N)\hookrightarrow \Q$ is also a regular immersion of codimension $2$. The blow up formulas for derived categories in Theorem 6.11 of \cite{jquot-1} can be applied to maps $f, g$. For the blow-up map $f$, there is a semiorthogonal decomposition
\begin{equation}\label{blowupsod1}
        \D(\Q') = \langle Lf^*\D(\Q), \bar{j}_*L\bar{f}^*\D(\P_S(\N)) \rangle \cong \langle \D(\Q), \D(S) \otimes \bar{j}_*\O_E \rangle,
\end{equation}
where $\D(S) \otimes \bar{j}_*\O_E$ denotes a subcategory that is equivalent to $\D(S)$, and it is obtained by pulling back objects in $\D(S)$ via the map $f'^*$ followed by tensoring with $\bar{j}_*\O_E$. Here $f'=p\circ f\colon \Q'\to S$ is the composition map and $f'$ is flat. The equivalence of the second components in the SOD (\ref{blowupsod1}) comes from the observation that $p\circ j\colon \P_S(\N)\to S$ is an isomorphism and we have
\begin{equation*}
    \bar{f}\cong p\circ j\circ\bar{f} \cong p\circ f\circ \bar{j} \cong f'\circ \bar{j}.
\end{equation*}
Similarly for the blow-up map $g$, there is a semiorthogonal decomposition
\begin{equation}\label{blowupsod2}
    \D(\Q') = \langle \D(\bar{\Q}), \D(\P_S(\E/\N)) \rangle.
\end{equation}

There is a semiorthogonal decomposition
\begin{equation*}
    \D(\Q)= \langle \A_{\Q}, p^*\D(S), p^*\D(S)\otimes \O(H)\rangle \cong \langle \A_{\Q}, \D(S)\otimes \O_{\Q}, \D(S)\otimes \O(H)\rangle
\end{equation*}
where $\A_{\Q}$ is the residual category and for the equivalence of components, similar notations are used as in the SOD (\ref{blowupsod1}). Therefore, the SOD (\ref{blowupsod1}) can be expanded as 
\begin{align}
    \D(\Q') & = \langle \A_{\Q}, \D(S)\otimes \O_{\Q'}, \D(S)\otimes \O(H), \D(S)\otimes \bar{j}_*\O_E\rangle \\
    & \cong \langle \A_{\Q}, \D(S)\otimes \O_{\Q'}, \D(S)\otimes \O(H-E), \D(S)\otimes \O(H)\rangle \label{mut1}\\
    & \cong \langle \D(S)\otimes \O(H-3h+D), \A_{\Q}, \D(S)\otimes \O_{\Q'}, \D(S)\otimes \O(h) \rangle \label{mut2}\\
    & \cong \langle \A_{\Q}, \D(S)\otimes \O(-h), \D(S)\otimes \O_{\Q'}, \D(S)\otimes \O(h) \rangle \label{mut3}\\
    & \cong \langle \A_{\Q}, \D(\P_S(\E/\N)) \rangle. \label{mut4}
\end{align}
The equivalences above are obtained by mutations. Denote by $\LL_T$ and $\RR_T$, respectively, the left and right mutation functors through $\D(S)\otimes T$ when $T$ is a relative exceptional object over $S$ (cf. \S 3.11 in \cite{jquot-1}). Note that up to divisors of $S$, we have $H-E=h$, $H-3h+D=-h$ by relations (\ref{relofdiv}) and the relative canonical divisor $K_{\Q'/S}=-3h+D$. In addition, up to a degree shift the relative dualizing complex $\omega_{f'}\cong \O(K_{\Q'/S})$. (\ref{mut1}) applies $\LL_{\O(H)}$ to $\D(S) \otimes  \bar{j}_*\O_E$; (\ref{mut2}) applies $-\otimes \omega_{f'}$ to $\D(S)\otimes \O(H)$, which uses Lemma \ref{mulem} (2); and (\ref{mut3}) applies $\LL_{\O(-h)}$ to $\A_{\Q}$. Comparing SODs (\ref{mut4}) and (\ref{blowupsod2}), we have $\A_{\Q}\cong \D(\bar{\Q})$.
\end{proof}

Since $\A_{\Q}\cong \D(\bar{\Q})$ is obtained by mutations, we lose information on the embedding functor $\A_{\Q}\to \D(\Q)$. To remedy this problem, we focus on working with $\A_{\Q}$ by itself and show that $\A_{\Q}$ is geometric using Corollary \ref{genb0cor} and the known description of $\A_{\Q}$ below.

Theorem 4.2 in \cite{kuzqfib} ($\kk$ algebraically closed and $\cchar(\kk)=0$) and Theorem 2.2.1 in \cite{abbqfib} (arbitrary field $\kk$) state that for all $n\in\ZZ$, there are semiorthogonal decompositions
\begin{equation}\label{kuzsod}
    \D(\Q) =\langle \Phi_n(\D(S,\B_0)), p^*\D(S)\otimes \O_{\Q/S}(1), p^*\D(S) \otimes \O_{\Q/S}(2) \rangle,
\end{equation}
where 
\begin{equation}\label{phin}
    \Phi_n\colon \D(S,\B_0) \to \D(\Q), \quad \F \mapsto p^*(\F)\otimes_{p^*\B_0}^{\LL} \cK_n
\end{equation}
are fully faithful functors of Fourier-Mukai type with kernels $\cK_n$. The kernels $\cK_n$ are left $\B_0$-modules constructed by
\begin{equation}\label{fmkseq}
    0\to \O_{\P_S(\E)/S}(-1)\otimes \pi^*\B_{n-1} \xrightarrow{\phi_n^\circ} \pi^*\B_{n} \to i_* \cK_n \to 0,
\end{equation}
where $\pi\colon \P_S(\E)\to S$ is the projetion, $i\colon \Q\hookrightarrow \P_S(\E)$ is the embedding and $\phi_n^\circ$ is the map defined in Remark \ref{spinordef2}. We can regard $\cK_n$ as the $n$-th spinor sheaf with respect to the zero isotropic subbundle.

We use the same notations as in Section \ref{hilbsec} and consider $Z\subset M$ the subscheme parametrizing lines in the fibers of $p\colon \Q\to S$ that intersect the smooth section. Note that $\beta\colon Z\to S$ together with $f\colon \beta^*\B_0\to \sEnd(\I_0^{\R_Z})$ from (\ref{endmodmap}) give a morphism
\begin{equation}\label{ncmor}
    \gamma=(\beta,f)\colon (Z, \sEnd(\I_0^{\R_Z})) \to (S,\B_0)
\end{equation}
of non-commmutative schemes as in Definition \ref{ncdef}.
\begin{prop}\label{nceqprop}
$R\gamma_*\colon  \DD^*(Z,\sEnd(\I_0^{\R_Z}))\to \DD^*(S,\B_0)$ is an equivalence for $*=-, \textrm{b}$.
\end{prop}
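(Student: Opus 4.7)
The plan is to construct an explicit left adjoint $L\gamma^*$ to $R\gamma_*$ and exhibit the adjoint pair as an equivalence using Corollary \ref{genb0cor} as the key input. Setting $\A := \sEnd(\I_0^{\R_Z})$ for brevity, the algebra map $f\colon \beta^*\B_0 \to \A$ from (\ref{endmodmap}) makes $\A$ into a $(\beta^*\B_0, \A)$-bimodule, with the left $\beta^*\B_0$-action through $f$ and the right $\A$-action tautological, and I would define
\[
L\gamma^* \F := L\beta^* \F \otimes^{L}_{\beta^* \B_0} \A \ \in\ \DD^*(Z, \A).
\]
The tensor--Hom adjunction combined with $L\beta^* \dashv R\beta_*$ presents $L\gamma^*$ as left adjoint to $R\gamma_*$, and boundedness for $* \in \{-, \mathrm{b}\}$ is preserved because $\A$ is locally free over $\O_Z$ (hence flat over $\beta^*\B_0$) and $\beta$ is projective of finite cohomological dimension.

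For the unit $\F \to R\gamma_* L\gamma^* \F$, the noncommutative projection formula for $\beta$ applied to the flat $\beta^*\B_0$-module $\A$ yields
\[
R\gamma_* L\gamma^*\F \ =\ R\beta_*\bigl(L\beta^*\F \otimes^{L}_{\beta^*\B_0} \A\bigr) \ \cong\ \F \otimes^{L}_{\B_0} R\beta_* \A \ \cong\ \F \otimes^{L}_{\B_0} \B_0 \ \cong\ \F,
\]
where the central identification $R\beta_*\A \cong \B_0$ is Corollary \ref{genb0cor}. This shows $L\gamma^*$ is fully faithful, so its essential image is an admissible subcategory of $\DD^*(Z, \A)$ whose semiorthogonal complement is $\ker R\gamma_*$.

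It therefore remains to show that $R\gamma_*$ is conservative, so that $\ker R\gamma_* = 0$ and both functors are equivalences. I would verify this locally on $S$ using the explicit decomposition
\[
\I_0^{\R_Z} \cong \I_0^{\O_{Z/S}(-1)} \oplus \beta^*(\N \otimes \L^\vee) \otimes \I_1^{\O_{Z/S}(-1)}
\]
and the block-matrix description of the $\B_0$-action from the proof of Corollary \ref{genb0cor}, together with Lemma \ref{dim0lem}(2), which identifies $\B_n$ with $R\beta_* \sHom_{\O_{\Q}}(\I_m^{\O_{Z/S}(-1)}, \I_{n+m}^{\O_{Z/S}(-1)})$ for all $m, n \in \ZZ$; locally the pair $(\A, \I_0^{\R_Z})$ behaves like a Morita datum transporting $\A$-modules faithfully to $\B_0$-modules on $S$. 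The main obstacle is precisely this conservativity step: because $\beta$ has positive-dimensional fibers over $S_2$, $R\beta_*$ is not conservative on $\DD^*(Z)$ in general, and one must exploit the extra $\A$-module structure to rigidify the situation. I would invoke the noncommutative foundations in Appendix \ref{ncschsec} to make both the projection formula and this local conservativity check precise.
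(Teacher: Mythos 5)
Your framework is the right one, and the first half of your argument matches the paper's: define $L\gamma^*\F = L\beta^*\F\otimes^{\LL}_{\beta^*\B_0}\sEnd(\I_0^{\R_Z})$, verify $L\gamma^*\dashv R\gamma_*$, and conclude $R\gamma_*L\gamma^*\cong\id$ from the projection formula (Proposition~\ref{projf}) together with Corollary~\ref{genb0cor}. This is exactly what the paper does.

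The gap is the conservativity step, which you correctly flag as ``the main obstacle'' but do not actually resolve. Your appeal to the \emph{local} decomposition of $\I_0^{\R_Z}$ (from the proof of Corollary~\ref{genb0cor}) and to Lemma~\ref{dim0lem}(2) misidentifies the tools: Lemma~\ref{dim0lem} is what makes the local computation of $R\beta_*\sEnd(\I_0^{\R_Z})$ work, but it does not by itself say anything about which $\A$-modules are killed by $R\gamma_*$, and ``locally behaves like a Morita datum'' cannot be literally true since $R\beta_*$ has nontrivial kernel over $S_2$ regardless of module structure. The mechanism that actually works is global on the fibers over $S_2$: one first reduces (via the spectral sequence $R^i\gamma_*\cH^j\Rightarrow R^{i+j}\gamma_*$) to $\F\in\Coh(Z,\A)$; since $\beta$ is finite away from $Z'=\beta^{-1}(S_2)$, such $\F$ with $R\gamma_*\F=0$ must be (scheme-theoretically) supported on $Z'\cong\P_{S_2}(\bar{\E}_2)$, so it suffices to show $j^*\F=0$; then the equivalence $-\otimes(\I_0^{\R_{Z'}})^\vee\colon\Coh(Z')\to\Coh(Z',\A|_{Z'})$ rewrites $j^*\F\cong\G\otimes(\I_0^{\R_{Z'}})^\vee$ for an \emph{ordinary} coherent sheaf $\G$ on $Z'$; the \emph{global} splitting (\ref{i0dec}) of $\I_0^{\R_{Z'}}$ (proved in Lemma~\ref{genb0lem}(1) via the $\P^1$-bundle SOD) exhibits $\G\otimes(\I_0^{\R_{Z'}})^\vee$ as a direct sum of $\G$ and $\G(1)$ up to line bundles pulled back from $S_2$; Proposition 2.11 of \cite{bbflop} transports the vanishing of $R\beta_*$ to the vanishing of $R\bar{\pi}'_*$; and finally $R\bar{\pi}'_*\G=R\bar{\pi}'_*\G(1)=0$ forces $\G=0$ by the SOD (\ref{sodz'}) of the $\P^1$-bundle $\bar{\pi}'\colon Z'\to S_2$. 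It is precisely the fact that $\I_0^{\R_{Z'}}$ contributes \emph{both} twists $\O$ and $\O(1)$ that upgrades $R\bar{\pi}'_*$-vanishing to actual vanishing; a single twist would not suffice. None of this is present in your sketch, and without it the argument is not complete.

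Two smaller points: (i) your claim that ``$\A$ is locally free over $\O_Z$ (hence flat over $\beta^*\B_0$)'' does not follow --- local freeness over $\O_Z$ says nothing about flatness over the subalgebra $\beta^*\B_0$, and in any case $L\beta^*$ alone need not preserve bounded complexes since $\beta$ is not flat; (ii) passage from the $\DD^-$ equivalence to the $\D$ equivalence needs a separate argument (the paper cites Lemma 2.4 and Corollary 2.5 of \cite{kuzdp6}), which your proposal does not address.
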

\begin{proof}
Consider functors
\begin{equation*}
    R\gamma_*\colon  \DD^-(Z,\sEnd(\I_0^{\R_Z}))\to \DD^-(S,\B_0), \quad L\gamma^*\colon  \DD^-(S,\B_0) \to \DD^-(Z,\sEnd(\I_0^{\R_Z}))
\end{equation*}
defined in Appendix \ref{ncschsec}. Firstly, we show $ R\gamma_*$ and $L\gamma^*$ are inverse functors.  Corollary \ref{genb0cor} indicates that $\B_0\cong R\gamma_*\sEnd(\I_0^{\R_Z})$. Then $R\gamma_* L\gamma^*$ is identity by the projection formula (Proposition \ref{projf}).

Conversely, we claim that $R\gamma_*\F=0$ implies $\F=0$ for all $\F\in\DD^-(Z, \sEnd(\I_0^{\R_Z}))$. Denote by $\cH^i$ the $i$-th cohomology sheaf. Using the spectral sequence $R^i\gamma_*\cH^j(\F)\Rightarrow R^{i+j}\gamma_* \F$, we can reduce to the case that $\F\in \Coh(Z, \sEnd(\I_0^{\R_Z}))$. Adopt the notations in Lemma \ref{genb0lem} (1) and let $j\colon Z'\hookrightarrow Z$ be the inclusion. Since $\beta|_{Z\backslash Z'}$ is finite, the condition $R\gamma_*\F=0$ implies that $\F$ is supported on $Z'\cong \P_{S_2}(\bar{\E_2})$. If additionally we can prove that $j^*\F=0$, then $\F=0$. Let $\F_1= j^*\F \in \Coh(Z',\sEnd(\I_0^{\R_{Z'}}))$. Since there is an equivalence
\begin{equation*}
    -\otimes (\I_0^{\R_{Z'}})^\vee\colon \Coh(Z')\xrightarrow{\cong} \Coh(Z',\sEnd(\I_0^{\R_{Z'}})),
\end{equation*}
we have $\F_1 \cong \G \otimes (\I_0^{\R_{Z'}})^\vee$ for some $\G\in \Coh(Z')$. From the decomposition (\ref{i0dec}), we deduce that
\begin{equation*}
    \G \otimes (\I_0^{\R_{Z'}})^\vee \cong \G\otimes \bar{\pi}'^* \left( (\det(\E^\vee)\otimes \L^2)|_{S_2}\right) \oplus \G(1) \otimes \bar{\pi}'^*\left( (\N^\vee \otimes \L)|_{S_2}\right).
\end{equation*}
Note that $\beta\colon Z\to S$ is a proper morphism with fibers of dimension at most $1$ and $\bar{\pi}'\colon Z'\to S_2$ is the base change of $\beta$ along the inclusion $S_2\hookrightarrow S$. Together with $R\gamma_*\F\cong R\beta_* \F=0$, Proposition 2.11 in \cite{bbflop} deduces that $R\bar{\pi}'_*\F_1=0$. This means that $R\bar{\pi}'_*\G=0$ and $R\bar{\pi}'_*\G(1)=0$, or equivalently
\begin{equation*}
    \Hom_{\D(Z')}(\bar{\pi}'^*\cK, \G)=0, \quad \Hom_{\D(Z')}((\bar{\pi}'^*\cK)(-1), \G)=0
\end{equation*}
for all $\cK\in \D(Z')$. The SOD (\ref{sodz'})
\begin{equation*}
    \D(Z')=\langle \bar{\pi}'^{*}\D(S_2)\otimes \O_{Z'/S}(-1), \bar{\pi}'^{*}\D(S_2)\rangle 
\end{equation*}
implies that $\G=0$. Thus, $\F_1=0$ and $\F=0$.

For every $\F\in\DD^-(Z,\sEnd(\I_0^{\R_Z}))$, consider the exact triangle
\begin{equation*}
    L\gamma^* R\gamma_* \F \to \F \to \cK
\end{equation*}
where $\cK$ is the cone of the first map. Applying $R\gamma_*$ to the exact triangle, we get $R\gamma_*\cK=0$. Hence, $\cK=0$ and $L\gamma^* R\gamma_*$ is the identity.

The equivalence on $\DD^-$ implies that
\begin{equation*}
    R\gamma_*\colon  \D(Z,\sEnd(\I_0^{\R_Z}))\to \D(S,\B_0)
\end{equation*}
is fully faithful. Recall that $R\gamma_* L\gamma^*\cong \id$. From the proofs of Lemma 2.4 and Corollary 2.5 in \cite{kuzdp6}, we get that $R\gamma_*$ is also essentially surjective on $\D$. Thus, $R\gamma_*$ is an equivalence on $\D$.
\end{proof}

\begin{theorem}\label{main2}
Let $p\colon \Q\to S$ be a flat quadric surface bundle where $S$ is an integral noetherian scheme over $\kk$ with $\cchar(\kk)\neq 2$. Assume that $p$ has a smooth section and the second degeneration $S_2\neq S$. Then for all $n\in \ZZ$ there are semiorthogonal decompositions
\begin{equation}
    \D(\Q) =\langle \Psi_n(\D(Z)), p^*\D(S)\otimes \O_{\Q/S}(1), p^*\D(S) \otimes \O_{\Q/S}(2) \rangle,
\end{equation}
where $Z$ is the scheme over $S$ parametrizing lines in the fibers of $p$ that intersect the smooth section and $\Psi_n\colon \D(Z) \to \D(\Q)$ are fully faithful functors of Fourier-Mukai type. In addition, $Z$ is isomorphic to the hyperbolic reduction $\bar{\Q}$ in Theorem \ref{main1}.

More specifically, let $\P_Z(\R_Z) \subset \Q \times_S Z$ be the universal family of lines that $Z$ parametrizes. Then the kernel of $\Psi_n$ is $\cS_n^{\R_Z}$, the $n$-th spinor sheaf with respect to the isotropic subbundle $\R_Z$ in Definition \ref{spinordef}. 
\end{theorem}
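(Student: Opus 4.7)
The plan is to realize $\Psi_n$ as a composition of three functors and then identify its Fourier--Mukai kernel by unwinding the composition. The left $\beta^*\B_0$-module $\I_0^{\R_Z}$ is locally free (cf.\ the filtration (\ref{i0fil})) and carries a right $\sEnd(\I_0^{\R_Z})$-action, so $-\otimes \I_0^{\R_Z}$ is a Morita equivalence $\D(Z)\xrightarrow{\cong}\D(Z,\sEnd(\I_0^{\R_Z}))$. I then set
\[
\Psi_n \;=\; \Phi_n \circ R\gamma_* \circ (-\otimes \I_0^{\R_Z}),
\]
where $R\gamma_*$ is the equivalence of Proposition~\ref{nceqprop} and $\Phi_n$ is the embedding from (\ref{kuzsod}). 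Each factor is fully faithful, and substituting this chain of equivalences into the SOD (\ref{kuzsod}) yields the claimed SOD for $\D(\Q)$. The identification $Z\cong\bar{\Q}$ is already built in through (\ref{zeqhr}), which exhibits $Z$ as $\{\bar{q}=0\}\subset \P_S(\bar{\E})$.

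It remains to identify the Fourier--Mukai kernel of $\Psi_n$. Let $p_Z\colon \Q\times_S Z\to Z$ and $\beta_{\Q}\colon \Q\times_S Z\to\Q$ denote the two projections, and let $\pi_Z$ and $\beta_\E$ denote the corresponding projections from $\P_S(\E)\times_S Z$. Combining flat base change for the Cartesian square (using that $p$ is flat) with the projection formula, for $\F\in \D(Z)$ one obtains
\[
\Psi_n(\F) \;\cong\; R\beta_{\Q\,*}\!\left( p_Z^*\F \,\otimes^{\LL}_{p_Z^*\beta^*\B_0}\, \bigl(\beta_\Q^*\cK_n \otimes p_Z^*\I_0^{\R_Z}\bigr) \right).
\]
Pulling back the defining two-term resolution (\ref{fmkseq}) of $\cK_n$ to $\P_S(\E)\times_S Z$ and tensoring it over $\pi_Z^*\beta^*\B_0$ with $\pi_Z^*\I_0^{\R_Z}$, Lemma~\ref{mulisolem}(1) identifies each term $\pi_Z^*\B_m\otimes_{\pi_Z^*\B_0}\pi_Z^*\I_0^{\R_Z}$ with $\pi_Z^*\I_m^{\R_Z}$ for $m=n-1,n$. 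The resulting complex is exactly the defining sequence (\ref{spinoronz}) of $\cS_n^{\R_Z}$, so its cokernel recovers $i_{Z*}\cS_n^{\R_Z}$ on $\P_S(\E)\times_S Z$. Hence $\Psi_n$ is a Fourier--Mukai transform with kernel $\cS_n^{\R_Z}$, viewed as a sheaf on $\Q\times_S Z$ (and then pushed forward along the closed embedding $\Q\times_S Z\hookrightarrow \Q\times Z$).

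I expect the main obstacle to be the bookkeeping behind the displayed formula in the second paragraph: one must verify that the derived tensor product over $\B_0$ collapses to an ordinary one, so that (\ref{fmkseq}) remains a resolution after tensoring with $\I_0^{\R_Z}$. This reduces to showing vanishing of higher $\mathcal{T}or$s between $\pi^*\B_{n-1}$, $\pi^*\B_n$ and $\I_0^{\R_Z}$ over $\pi^*\B_0$, which follows from Lemma~\ref{mulisolem}(1) (exhibiting these tensor products as Clifford ideals, hence locally free). Once these Tor-vanishings are in place, the base-change and projection-formula manipulations are standard, and the proof is complete.
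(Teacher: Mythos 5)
Your overall strategy---decomposing $\Psi_n$ as $\Phi_n\circ R\gamma_*\circ(-\otimes(\text{Morita bimodule}))$ and then unwinding by flat base change and the projection formula---is the same as the paper's. However, there is a genuine error in the choice of Morita bimodule: you use the \emph{left} Clifford ideal $\I_0^{\R_Z}$ where the argument needs the \emph{right} Clifford ideal $\I_0^{\circ\R_Z}$ (which, by Lemma \ref{clidealdual}, differs from $(\I_0^{\R_Z})^\vee$ only by a line bundle twist). This is not a matter of notation; it breaks the key step.

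The paper's $\Coh(Z,\sEnd(\I_0^{\R_Z}))$ consists of \emph{right} modules (Appendix \ref{ncschsec}), so the Morita equivalence out of $\Coh(Z)$ is $-\otimes(\I_0^{\R_Z})^\vee$, not $-\otimes\I_0^{\R_Z}$. More seriously, the step where you tensor the resolution (\ref{fmkseq}) of $\cK_n$ on the right over $\B_0$ with $\I_0^{\R_Z}$ does not go through. The kernel $\cK_n$ is a \emph{left} $\B_0$-module precisely because the map $\phi_n^\circ$ (right multiplication by $v$) in (\ref{fmkseq}) is a left $\B_0$-module map; it is \emph{not} a right $\B_0$-module map, since $(\eta a)v\neq(\eta v)a$ in the Clifford algebra for $a\in\B_0$, $v\in\B_1$. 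Forming $\B_m\otimes_{\B_0}\I_0^{\R_Z}$ uses the \emph{right} $\B_0$-structure on $\B_m$, so $\phi_n^\circ\otimes_{\B_0}\mathrm{id}$ is ill-defined. One must instead tensor a right $\B_0$-module on the \emph{left}: $\I_0^{\circ\R_Z}\otimes_{\B_0}(-)$. Then the right-ideal version of Lemma \ref{mulisolem}(1) gives $\I_0^{\circ\R_Z}\otimes_{\B_0}\B_m\cong\I_m^{\circ\R_Z}$, the induced map is $\phi_n^\circ$ on $\I_\bullet^{\circ\R_Z}$, and its cokernel is $\cS_n^{\R_Z}$ exactly as in Remark \ref{spinordef2}. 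Even if you formally pushed your version through, the induced map would be middle multiplication $v\otimes\eta\zeta\mapsto(\eta v)\zeta$, which is neither $\phi_n$ of Definition \ref{spinordef} nor $\phi_n^\circ$ of Remark \ref{spinordef2}, so you would not identify the cokernel with $\cS_n^{\R_Z}$; your claim that the result is "exactly (\ref{spinoronz})" is therefore false.

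For the minor concern at the end: the collapse of the derived tensor over $\B_0$ to an underived one is cleanest to see from (\ref{primiso}), which makes each $\B_m$ an invertible $\B_0$-bimodule (hence flat on both sides); exhibiting the underived tensors as locally free sheaves does not by itself imply the vanishing of higher $\mathcal{T}or$'s.
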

\begin{proof}
For simplicity, denote by $\otimes, g_*, g^*$ the derived tensor product, the derived pull-forward and pull-back functors of a map $g$, respectively. The isomorphism $Z\cong \bar{Q}$ has been pointed out by (\ref{zeqhr}).

Consider the composition
\begin{equation}\label{psin}
    \Psi_n\colon \D(Z) \xrightarrow[\cong]{-\otimes \I_0^{\circ \R_Z}} \D(Z, \sEnd(\I_0^{\R_Z})) \xrightarrow[\cong]{\gamma_*} \D(S,\B_0) \xrightarrow{\Phi_n} \D(\Q),
\end{equation}
where $\Phi_n$ is defined in (\ref{phin}) and $\gamma=(\beta,f)\colon (Z, \sEnd(\I_0^{\R_Z})) \to (S,\B_0)$ in (\ref{ncmor}). From Lemma \ref{clidealdual}, we get
\begin{equation*}
    \I_0^{\circ \R_Z} \cong (\I_0^{\R_Z})^\vee \otimes \det(\R_Z)\otimes \beta^*(\det(\E)\otimes (\L^\vee)^3).
\end{equation*}
Thus, the first functor in (\ref{psin}) is an equivalence because $-\otimes (\I_0^{\R_Z})^\vee$ is such. From Proposition \ref{nceqprop}, we get $\gamma_*$ is also an equivalence. Thus, $\Psi_n$ is fully faithful and for every $\F\in\D(Z)$,
\begin{equation*}
    \Psi_n(\F) = p^*\beta_*(\F\otimes \I_0^{\circ \R_Z})\otimes_{p^*\B_0} \cK_n.
\end{equation*}

Consider the Cartesian squares
\begin{equation*}
    \begin{tikzcd}
    \Q\times_S Z \arrow[hook]{r}{i_Z} \arrow{d}{\beta_{\Q}} & \P_S(\E)\times_S Z \arrow{r}{\pi_Z} \arrow{d}{\beta_{\E}} & Z\arrow{d}{\beta}\\
    \Q \arrow[hook]{r}{i} & \P_S(\E) \arrow{r}{\pi} & S.
    \end{tikzcd}
\end{equation*}
Recall that $p=\pi\circ i$ and denote $p_Z=\pi_{Z}\circ i_Z$. Let 
$\theta=\beta_{\E}\circ \pi = \pi_Z \circ \beta$. Since $\pi$ is flat, the right Cartesian square is exact. We have 
\begin{align*}
    i_*\Psi_n(\F) & \cong \pi^*\beta_*(\F\otimes \I_0^{\circ \R_Z})\otimes_{\pi^*\B_0} i_*\cK_n\\
    & \cong \beta_{\E*}\pi_Z^*(\F\otimes \I_0^{\circ \R_Z})\otimes_{\pi^*\B_0} i_*\cK_n\\
    & \cong \beta_{\E*} \left( \pi_Z^*(\F\otimes \I_0^{\circ \R_Z})\otimes_{\theta^*\B_0} \beta_{\E}^*i_*\cK_n \right).
\end{align*}
On the other hand, applying $\pi_Z^*\I_0^{\circ \R_Z}\otimes_{\theta^*\B_0} \beta_{\E}^*(-)$ to the sequence (\ref{fmkseq}) and using the isomorphism from Lemma \ref{mulisolem} (1), we have
\begin{equation*}
    0 \to \beta_{\E}^*\O_{\P_S(\E)/S}(-1)\otimes \pi_Z^*\I_{n-1}^{\circ \R_Z} \xrightarrow{\phi_n^{\circ}} \pi_Z^* \I_{n}^{\circ \R_Z} \to \pi_Z^*(\I_0^{\circ \R_Z})\otimes_{\theta^*\B_0}\beta_{\E}^* (i_* \cK_n) \to 0.
\end{equation*}
From Definition \ref{spinordef}, we get that the last term is $i_{Z*}\cS_n^{\R_Z}$. Then we have
\begin{align*}
    i_*\Psi_n(\F) & \cong \beta_{\E*}(\pi_Z^*\F \otimes i_{Z*}\cS_n^{\R_Z})\\
    & \cong \beta_{\E*}i_{Z*}(p_Z^*\F \otimes \cS_n^{\R_Z})\\
    & \cong i_*\beta_{\Q*}(p_Z^*\F \otimes \cS_n^{\R_Z}).
\end{align*}
Hence, $\Psi_n$ are Fourier-Mukai functors with kernels $\cS_n^{\R_Z}$.
\end{proof}
%-----------------------------------------------------------------
\section{Quadric surface bundles over surfaces}\label{surfbsec}
Use the same notations as in Section \ref{hilbsec}. Throughout this section, assume that $\kk$ is algebraically closed and $p\colon \Q\to S$ is a flat quadric surface bundle (except for Lemma \ref{bealem}) where $\Q$ is smooth and $S$ is a smooth surface. Recall that $S_k\subset S$ is the $k$-th degeneration locus. We will show that the residual category $\A_{\Q}$ is twisted geometric by performing birational transformation to the relative Hilbert scheme of lines following the approach of \cite{kuzline}.

Recall that the relative Hilbert scheme of lines $\rho \colon M\to S$ factors as $M\xrightarrow{\tau} \wt{S} \xrightarrow{\alpha} S$. From Lemma \ref{disccover} and Lemma \ref{bealem}, we get that the discriminant cover $\alpha$ in this section is a double cover ramified along $S_1$. When $p\colon \Q\to S$ has simple degeneration, the map $\tau\colon M\to \wt{S}$ is a smooth conic bundle and the Azumaya algebra $\wt{\A}$ corresponding to $\tau$ satisfies $\alpha_*\wt{\A}\cong \B_0$. This implies that $\A_{\Q}$ is equivalent to the twisted derived category $\D(\wt{S}, \wt{\A})$. But when $p$ does not have simple degeneration, $\tau$ is no longer a smooth conic bundle. The main idea of the section is to modify $\tau$ so that we can still describe $\A_{\Q}$ as a twisted derived category. In the first part, we will perform birational transformations to $\tau$ and obtain a smooth conic bundle $\tau_+\colon M^+\to S^+$ in Proposition \ref{birtran}. The second part is devoted to the proof of Theorem \ref{main3}, where we show $\A_{\Q}\cong \D(S^+, \A^+)$ for $\A^+$ the Azumaya algebra corresponding to $\tau_+$.

\medskip

When the total space of a quadric bundle (not necessarily a quadric surface bundle) is smooth and the base is a smooth surface, fibers of the quadric bundle will not be too singular:

\begin{lemma}[{\cite[Proposition 1.2 (iii)]{beaprym}}] \label{bealem}
Let $p\colon \Q\to S$ be a flat quadric bundle where $\Q$ is smooth and $S$ is a smooth surface. Then $S_3=\emptyset$, $S_1\subset S$ is a curve with at most a finite number of ordinary double points, and the singular locus of $S_1$ is $S_2$.
\end{lemma}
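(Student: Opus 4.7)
The statement is local on $S$, so the plan is to fix a point $s_0 \in S$ and analyze a local model of $p$ at $s_0$. Choose local coordinates $t_1, t_2$ on $S$ at $s_0$, trivialize $\E$ and $\L$ near $s_0$, and represent the quadratic form $q$ by a symmetric matrix $A(t) = (A_{ij}(t))$ with entries regular at $s_0$. After a linear change of coordinates on $\E$ one may assume $A(s_0) = \operatorname{diag}(0_k, B)$ with $B$ invertible and $k$ the corank of $\Q_{s_0}$. The singular points of the fiber $\Q_{s_0}$ are exactly the points of $\P(\ker A(s_0))$, and differentiating the defining equation $\sum_{ij} A_{ij}(t)\, x_i x_j = 0$ at $(s_0, x_0)$ with $x_0 \in \ker A(s_0) \setminus \{0\}$ shows that $\Q$ is smooth at $(s_0, x_0)$ if and only if one of the two quadratic forms
\[
q_\ell(x) := x^\top (\partial_{t_\ell} A(s_0))\, x, \qquad \ell = 1, 2,
\]
on $\ker A(s_0)$ does not vanish at $x_0$.

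If $s_0 \in S_3$ then $\dim \ker A(s_0) \geqslant 3$, so $q_1$ and $q_2$ define two plane conics (possibly zero) on some $\P^2 \subset \P(\ker A(s_0))$, which by Bezout share a point; that point gives a singular point of $\Q$, contradicting smoothness. Hence $S_3 = \emptyset$. To analyze $S_1$ at $s_0 \in S_1$, write $A(t)$ in block form $\bigl(\begin{smallmatrix} M(t) & N(t) \\ N(t)^\top & B(t) \end{smallmatrix}\bigr)$ with $M(s_0) = 0$, $N(s_0) = 0$ (automatic), and $B(s_0)$ invertible. A Schur-complement computation gives $\det A(t) = (\text{unit}) \cdot \det(M(t) + O(t^2))$, so the leading jet of $\det A$ at $s_0$ equals that of $\det M$. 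At $s_0 \in S_1 \setminus S_2$ (so $k=1$) this jet is the linear form $t_1 M_1 + t_2 M_2$ with $M_\ell = \partial_{t_\ell} M(s_0) \in \kk$, which is non-zero precisely when the smoothness criterion above holds, so $S_1$ is smooth away from $S_2$. At $s_0 \in S_2$ (so $k=2$) this jet is the binary quadratic form $Q(t_1, t_2) = \det(t_1 M_1 + t_2 M_2)$, and a direct expansion shows that the discriminant of $Q$ equals (up to sign) the resultant of the two binary quadratic forms $q_1, q_2$ on $\P(\ker A(s_0)) \cong \P^1$. Smoothness of $\Q$ above $s_0$ is equivalent to $q_1, q_2$ having no common zero on $\P^1$, equivalently non-vanishing of this resultant, equivalently non-degeneracy of $Q$, equivalently $S_1$ having an ordinary double point at $s_0$. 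Combining both cases gives $\operatorname{Sing}(S_1) = S_2$ and that $S_2$ consists of isolated ODPs of the curve $S_1$, hence is finite in the noetherian setting at hand.

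The only genuinely computational point is the discriminant-resultant identity at points of $S_2$: matching the non-degeneracy of $\det(t_1 M_1 + t_2 M_2)$ (the ODP criterion for $S_1$) with the non-vanishing of the resultant of the two binary quadratics $q_1, q_2$ (the smoothness criterion for $\Q$ along the $\P^1$-ruling of singular points). This is the $2 \times 2$ case of a classical identity; once established, everything else is a combination of linear algebra and Bezout, the rest of the proof being bookkeeping across the three loci $S \setminus S_1$, $S_1 \setminus S_2$, $S_2$.
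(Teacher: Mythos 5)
The paper does not give its own proof of this lemma; it is cited verbatim from Beauville's \emph{Vari\'et\'es de Prym et jacobiennes interm\'ediaires} (Proposition 1.2(iii)), so there is no in-paper argument to compare against. Your blind proof is correct and is essentially the classical local argument that underlies that citation: trivialize $\E$ and $\L$ near $s_0$, represent $q$ by a symmetric matrix $A(t)$ with $A(s_0)=\operatorname{diag}(0_k,B)$, use the Jacobian criterion (the $x$-partials of $\sum A_{ij}(t)x_ix_j$ vanish along $\P(\ker A(s_0))$, so smoothness of $\Q$ at $(s_0,[x_0])$ forces $q_1(x_0)\neq 0$ or $q_2(x_0)\neq 0$), rule out $S_3$ by the fact that two quadric hypersurfaces in $\P^{k-1}$ must meet once $k\geqslant 3$, and reduce the shape of $S_1$ near $s_0$ to the leading jet of $\det A$ via the Schur complement $\det A = (\text{unit})\cdot\det(M - NB^{-1}N^\top)$ with $M-NB^{-1}N^\top = M(t) + O(t^2)$.

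The key computational point you flag --- that at a corank-$2$ point the $2$-jet of $\det A$, namely $Q(t)=\det(t_1M_1+t_2M_2)$ with $M_\ell=\partial_{t_\ell}M(s_0)$, has discriminant equal to the Sylvester resultant of the two binary quadratics $q_\ell(x)=x^\top M_\ell x$ on $\P(\ker A(s_0))\cong\P^1$ --- does check out by direct expansion (in fact the two quartic polynomials coincide exactly, not merely up to sign). This identifies non-degeneracy of $Q$ (the ODP criterion for $S_1$ at $s_0$) with the absence of a common zero of $q_1,q_2$ on $\P^1$ (the smoothness criterion for $\Q$ along the singular $\P^1$ of $\Q_{s_0}$). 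The remaining bookkeeping over the strata $S\setminus S_1$, $S_1\setminus S_2$, $S_2$, together with the observation that isolated ODPs of the curve $S_1$ form a closed zero-dimensional, hence finite, subscheme in the noetherian setting, is as you describe. Incidentally, this is the same local normal form (a quadratic form $ax_1^2+bx_1x_2+cx_2^2+x_3^2+x_4^2$ with $a,b,c$ in the maximal ideal, and $\det b_q=4ac-b^2$) that the paper itself uses in the proof of Lemma~\ref{mgeom} to study the node of the Hilbert scheme $M$, so your argument fits naturally with the paper's computations elsewhere.
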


Since $S_3=\emptyset$, for every geometric point $s\in S_2$, the geometric fiber $M_s = \Sigma_s^+\cup \Sigma_s^-$ is the union of two planes $\Sigma_s^{\pm}\cong \P^2$ intersecting at a point. Denote by $m_s=\Sigma_s^+\cap \Sigma_s^-$ the intersection point. The geometry of $M$ is described below.

\begin{lemma}\label{mgeom}
Let $p\colon \Q\to S$ be a flat quadric surface bundle where $\Q$ is smooth and $S$ is a smooth surface. Then the relative Hilbert scheme of lines $M$ has at most a finite number of ordinary double points $\{m_s\}_{s\in S_2}$.
\end{lemma}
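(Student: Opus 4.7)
The plan is a direct local computation on $\Gr_S(2,\E)$. The scheme $M$ sits in the smooth $6$-fold $\Gr_S(2,\E)$ as the zero locus of a rank-$3$ bundle section, so it has expected dimension $3$, and smoothness at $[L]\in M$ amounts to the Jacobian of the three defining equations having full rank $3$. Lemma~\ref{bealem} already guarantees that $S_2$ is a finite set of closed points (the nodes of the discriminant curve $S_1$), so the candidate singular points $\{m_s\}_{s\in S_2}$ are finite in number. I want to show that (a) $M$ is smooth at every point other than the $m_s$, and (b) each $m_s$ is an ordinary double point.

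For (b), I fix $s\in S_2$, pick local coordinates $t_1,t_2$ on $S$ centered at $s$, and choose a local basis $e_1,\dots,e_4$ of $\E$ trivializing $\L$ in which the symmetric matrix $B(0)=b_q(0)$ has $B_{34}(0)=B_{43}(0)=1$ as its only nonzero entries; then $q_s=x_3x_4$ and $m_s$ corresponds to $L_0=\{x_3=x_4=0\}$. In the Grassmannian chart $v_1=e_1+ae_3+be_4$, $v_2=e_2+ce_3+de_4$, the scheme $M$ is cut out by $f_1=\tfrac{1}{2} v_1^{T}B(t)v_1$, $f_2=\tfrac{1}{2} v_2^{T}B(t)v_2$, and $f_3=v_1^{T}B(t)v_2$. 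A direct calculation shows that at $m_s$ all partials in $a,b,c,d$ vanish, while the $t$-differentials of $f_1,f_2,f_3$ are $\tfrac{1}{2} L_1,\tfrac{1}{2} L_2, L_{12}$, where $L_1,L_2,L_{12}$ denote the linear parts in $t$ of $B_{11},B_{22},B_{12}$. Smoothness of $\Q$ along the singular $\P^1\subset\Q_s$ forces $L_1,L_2,L_{12}$ to span the $2$-dimensional cotangent space $T_s^\vee S$: otherwise, for some $[a_0{:}b_0]$ the binary form $a_0^2 L_1+2a_0 b_0 L_{12}+b_0^2 L_2$ would vanish, and $\Q$ would be singular at $[a_0{:}b_0{:}0{:}0]$. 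Hence the Jacobian has rank exactly $2$, the tangent space $T_{m_s}M$ is the $4$-dimensional subspace $\langle\partial_a,\partial_b,\partial_c,\partial_d\rangle$, and $M$ is a hypersurface singularity at $m_s$ inside a smooth $4$-fold.

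Taking the essentially unique relation $c_1L_1+c_2L_2+2c_3L_{12}=0$, the combination $g=c_1f_1+c_2f_2+c_3f_3$ has vanishing differential at $m_s$, and its quadratic part restricted to $T_{m_s}M$ is
\[
Q_0 \;=\; c_1 ab+c_2 cd+c_3(ad+bc),
\]
whose determinant computes to $(c_1c_2-c_3^2)^2/16$. A short algebraic identity shows that $c_1c_2-c_3^2$ agrees, up to a nonzero constant, with the discriminant of the binary quadratic $L_1L_2-L_{12}^2$, which itself is the leading term of $\det B(t)$ at $s$; by Lemma~\ref{bealem}, $s$ is a node of $S_1$, so this discriminant is nonzero, $Q_0$ is non-degenerate, and $m_s$ is an ordinary double point. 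For (a), an analogous Jacobian computation at any $[L]\in M$ other than the $m_s$ yields full rank $3$: immediate from non-degeneracy of $B$ when $\rho([L])\notin S_1$; using smoothness of $\Q$ at the singular point of $\Q_{\rho([L])}$ when $\rho([L])\in S_1\setminus S_2$; and using $[L]\in\Sigma_s^\pm\setminus\{m_s\}$ when $\rho([L])\in S_2$. The hard part will be the identification of $c_1c_2-c_3^2$ with the discriminant of $L_1L_2-L_{12}^2$, which bridges the nodality of $S_1$ from Lemma~\ref{bealem} with the non-degeneracy of the Hessian on $T_{m_s}M$; once this dictionary is set up, the rest is routine bookkeeping.
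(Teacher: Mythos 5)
Your proposal is correct and follows the same overall strategy as the paper: reduce to the singular fiber point $m_s$, exhibit $M$ locally as a hypersurface in a smooth $4$-fold, and check that the Hessian there is non-degenerate using the nodality of $S_1$ guaranteed by Lemma~\ref{bealem}. Both proofs also deduce finiteness of $\{m_s\}$ from the same Lemma~\ref{bealem}. Where you differ from the paper is in the bookkeeping. The paper passes to the formal completion $\widehat{\O_{S,s}}$ and cites Corollary~3.3 of \cite{baeqf} to diagonalize the form to $ax_1^2+bx_1x_2+cx_2^2+x_3^2+x_4^2$, then normalizes $a=t_1,c=t_2$, and checks the degree-$2$ part of a single explicit hypersurface equation has full rank. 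You instead keep the hyperbolic normalization $q_s=x_3x_4$, avoid formal completion and \cite{baeqf}, and organize the non-degeneracy check through the identity relating $c_1c_2-c_3^2$ to the discriminant of $L_1L_2-L_{12}^2$. That identity is in fact true: writing $L_1=a_1t_1+a_2t_2$, $L_2=b_1t_1+b_2t_2$, $L_{12}=d_1t_1+d_2t_2$, one computes that the discriminant of $L_1L_2-L_{12}^2$ equals $P^2+4QR$ with $P=a_1b_2-a_2b_1$, $Q=a_1d_2-a_2d_1$, $R=b_1d_2-b_2d_1$, while the cross-product description of the unique relation gives $(c_1,c_2,2c_3)\propto(R,-Q,P)$, so $c_1c_2-c_3^2\propto -\tfrac14(P^2+4QR)$; the smoothness of $\Q$ along the singular $\P^1\subset\Q_s$ forces $(L_1,L_2,L_{12})$ to have rank $2$, so the proportionality constant is nonzero, and nodality of $S_1$ at $s$ gives the non-vanishing you need. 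Your part (a), smoothness of $M$ away from $\{m_s\}$, is only sketched, whereas the paper invokes Proposition~2.1 of \cite{kuzline} (smoothness of $M$ at $(x,K)$ when $\dim(K\cap K_x)\leqslant 1$); filling in your case analysis directly is feasible but is exactly what that cited result packages for you, so it would be cleaner to just quote it. Your route buys a self-contained local argument that avoids both \cite{baeqf} and (modulo the sketch) \cite{kuzline}; the paper's route is shorter on the page at the cost of the two external citations.
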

\begin{proof}
By Lemma \ref{bealem}, $S_2$ is a finite set of points and thus so is $\{m_s\}_{s\in S_2}$. For a point $x\in S$, denote by $K_x\subset \E_x$ the kernel of the quadratic form $q_x$ over the residue field of the point $x$. A point of $M$ is represented by $(x, K)$ where $x$ is a point of $S$ and $K$ is a $2$-dimensional subspace of $\E_x$. Proposition 2.1 in \cite{kuzline} states that $M$ is smooth at $(x,K)$ if $\dim(K\cap K_x)\leqslant 1$. Hence, $M$ is smooth away from $\{m_s\}_{s\in S_2}$.

For the singularity of $M$, we can replace $S$ by $\Spec(\widehat{\O_{S,s}})$ where $\widehat{\O_{S,s}}\cong \kk \llbracket t_1, t_2 \rrbracket$ is the formal completion at a point $s\in S_2$. Note that all units in $\kk \llbracket t_1, t_2 \rrbracket$ are squares. By Corollary 3.3 in \cite{baeqf}, the quadratic form can be written as
\begin{equation*}
    q=ax_1^2+bx_1x_2+cx_2^2 +x_3^2+x_4^2 
\end{equation*}
where $a,b,c$ are contained in the ideal $(t_1,t_2)$. Recall that $M\subset S\times \Gr(2,4)$ is the zero locus of the section $s_M$ in (\ref{msec}). We can refer to Section 3.3 in \cite{hvav} for the explicit correspondence. Denote the variables of $\Gr(2,4)$ by $\{y_{ij}\}_{1\leqslant i<j\leqslant 4}$ and assume that the singular point of $M$ is $(0, [1:0:\dots :0])$. Consider the open neighborhood $U:=S\times \{y_{12}=1\}$ of the singular point. Then $M\cap U$ is defined by
\begin{equation}\label{meq1}
    \left\{
    \begin{array}{l}
    y_{23}^2+ y_{24}^2=-2a\\
    y_{13}y_{23}+y_{14}y_{24}=b\\
    y_{13}^2+y_{14}^2=-2c
    \end{array}
    \right..
\end{equation}
By Lemma \ref{bealem}, $S_1=\{\det(b_q)=4ac-b^2=0\}$ has an ordinary double point at $0\in S$. The degree $2$ term of $\det(b_q)$ is $4l_al_c-l_b^2$ where $l_a, l_b, l_c$ are the linear terms of $a,b,c$, respectively. Let $\Delta$ be the discriminant of $4l_al_c-l_b^2$. From $\Delta\neq 0$, we get that $l_a, l_b, l_c$ are not proportional. We can assume $a=t_1, c=t_2$ and $l_b=\lambda t_1+ \mu t_2$ for $\lambda,\mu \in\kk$. Then $\Delta=16(1-\lambda\mu)\neq 0$ and Equations (\ref{meq1}) reduce to 
\begin{equation}\label{meq2}
    \left\{ \frac{\lambda}{2}y_{23}^2 +y_{13}y_{23} +\frac{\mu}{2}y_{13}^2 +\frac{\lambda}{2} y_{24}^2 +y_{14}y_{24} +\frac{\mu}{2}y_{14}^2 +\epsilon=0 \right\} \subset \AA_{\kk}^4,
\end{equation}
where $\epsilon$ is a sum of degree $\geqslant 3$ terms in $y_{13}, y_{23}, y_{14}, y_{24}$. Since $1-\lambda\mu\neq 0$, the discriminants of 
\begin{equation*}
    \frac{\lambda}{2}y_{2i}^2 +y_{1i}y_{2i} +\frac{\mu}{2}y_{1i}^2, \quad i=3,4
\end{equation*}
are non-zero. Hence, the degree $2$ part of (\ref{meq2}) has full rank and $M$ has an ordinary double point.
\end{proof}

We will construct a minimal resolution $\wt{M}$ of the nodal $3$-fold $M$. This is obtained by blowing up $M$ along $\bigsqcup_{s\in S_2} \Sigma_s^+$ (or alternatively $\bigsqcup_{s\in S_2} \Sigma_s^-$). Lemma \ref{nblem} and Example \ref{minresex} serve as preparations for this step.

\begin{lemma}\label{nblem}
Under the assumptions of Lemma \ref{mgeom}, denote by $M^{\circ} = M - \{m_s\}_{s\in S_2}$ the smooth locus of $M$. Write $\Sigma=\Sigma_s^*$ for $*=\pm$ and $\Sigma^{\circ}=\Sigma-\{m_s\}$. 

(1) The normal bundle $N_{\Sigma^{\circ}/M^{\circ}}$ is isomorphic to $\O_{\Sigma^{\circ}}(-2)$.

(2) $\O_{\Sigma}(-K_M) \cong \O_{\Sigma}(1)$.
\end{lemma}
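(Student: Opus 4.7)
The plan is to compute the restriction $\omega_M|_\Sigma$ via adjunction inside the relative Grassmannian $\Gr := \Gr_S(2, \E)$, which immediately gives (2), and then to deduce (1) by adjunction for the smooth Cartier divisor $\Sigma^\circ \subset M^\circ$ sitting inside the smooth threefold $M^\circ$.

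I would first use that $M \subset \Gr$ is the zero locus of the section $s_M$ of $\Sym^2(\R^\vee) \otimes \pi_\Gr^* \L$ from (\ref{msec}), where $\pi_\Gr\colon \Gr \to S$ is the projection. Since $\rho\colon M\to S$ is generically a smooth conic bundle over the irreducible discriminant double cover $\wt{S}$, the scheme $M$ is irreducible of dimension $3$, while $\Gr$ is smooth of dimension $6$; hence $M$ attains the expected codimension $3$, the section $s_M$ is regular, and $M$ is a local complete intersection in $\Gr$. In particular $M$ is Gorenstein and adjunction applies globally. Using $\omega_{\Gr/S} \cong (\det\R)^4 \otimes \pi_\Gr^*(\det\E)^{-2}$ for the $\Gr(2,4)$-bundle and $\det(\Sym^2\R^\vee \otimes \pi_\Gr^*\L) \cong (\det\R)^{-3} \otimes \pi_\Gr^*\L^3$, adjunction simplifies to
\[
\omega_M \cong (\det\R)|_M \otimes \rho^*\bigl(\omega_S \otimes (\det\E)^{-2} \otimes \L^3\bigr).
\]

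Next I would restrict to $\Sigma = \Sigma_s^*$. The map $\rho|_\Sigma$ factors through the point $s \in S_2$, so the $\rho^*$-factor restricts to a trivial line bundle on $\Sigma$. On the other hand, writing $V \subset \E_s$ for the $3$-dimensional subspace such that $\P(V) \subset Q_s$ is the plane whose lines are parametrized by $\Sigma$, we have $\Sigma \cong \Gr(2, V) \cong \P^2$ and $\R|_\Sigma$ is the tautological subbundle, so $\det\R|_\Sigma \cong \O_\Sigma(-1)$. This yields $\omega_M|_\Sigma \cong \O_\Sigma(-1)$, which is statement (2). For (1), the smooth Cartier divisor $\Sigma^\circ \subset M^\circ$ in the smooth threefold $M^\circ$ gives $\omega_{\Sigma^\circ} \cong \omega_{M^\circ}|_{\Sigma^\circ} \otimes N_{\Sigma^\circ/M^\circ}$ by standard adjunction; combining with $\omega_\Sigma \cong \O_\Sigma(-3)$ and (2) produces $N_{\Sigma^\circ/M^\circ} \cong \O_{\Sigma^\circ}(-2)$. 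The one delicate point is verifying the regularity of $s_M$ at the nodes $m_s$ so that $\omega_M$ is a genuine line bundle and the adjunction in $\Gr$ is valid globally, but this reduces to $M$ having codimension exactly $3$ in $\Gr$, which follows from $M$ being irreducible of dimension $3$.
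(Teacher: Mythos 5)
Your proof is correct, but it inverts the logical order of the paper's argument and replaces the key computation with its ``dual.'' The paper proves (1) first, directly, by applying determinants to the normal bundle exact sequence $0 \to N_{\Sigma^{\circ}/M^{\circ}} \to N_{\Sigma^{\circ}/\Gr_S(2,\E)} \to (N_{M^{\circ}/\Gr_S(2,\E)})|_{\Sigma^{\circ}} \to 0$, computing the outer two terms from Grassmannian geometry and from the section $s_M$, and solving for the rank-one middle kernel; it then deduces (2) from (1) by adjunction along $\Sigma^{\circ} \subset M^{\circ}$, extending to all of $\Sigma$ by invoking that $M$ is Gorenstein. You instead prove (2) first, globally, by adjunction for $M \subset \Gr_S(2,\E)$ to get $\omega_M \cong (\det\R)|_M \otimes \rho^*(\omega_S \otimes (\det\E)^{-2}\otimes\L^3)$, restrict to $\Sigma$ where the $\rho^*$ factor trivializes and $\det\R|_\Sigma \cong \O_\Sigma(-1)$, and then obtain (1) by adjunction on the smooth locus. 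The determinant bookkeeping is essentially the same in both, but your route has the modest benefit of producing a reusable global formula for $\omega_M$ and of landing on $\Sigma$ directly (no extension step from $\Sigma^\circ$), at the cost of having to justify up front that $s_M$ is a regular section; your justification via pure codimension $3$ in the Cohen--Macaulay $\Gr_S(2,\E)$ is fine, though it is cleaner to just cite Lemma \ref{mgeom}, which already exhibits $M$ as smooth away from finitely many ordinary double points and hence a local complete intersection of pure dimension $3$. One cosmetic caveat: your appeal to irreducibility of $\wt{S}$ tacitly needs $S_1 \neq \emptyset$, which is automatic here because the lemma is vacuous when $S_2 = \emptyset$, but it would be better to avoid the detour through $\wt{S}$ and argue equidimensionality directly.
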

\begin{proof}
(1) There is an exact sequence of normal bundles
\begin{equation*}
    0\to N_{\Sigma^{\circ}/M^{\circ}} \to N_{\Sigma^{\circ}/\Gr_S(2, \E)} \to (N_{M^{\circ}/\Gr_S(2,\E)})|_{\Sigma^{\circ}} \to 0.
\end{equation*}
From inclusions $\Sigma^{\circ}\subset \Gr(2,4) \subset\Gr_S(2, \E)$, a similar exact sequence of normal bundles give
\begin{equation*}
    N_{\Sigma^{\circ}/\Gr_S(2, \E)} \cong N_{\Sigma^{\circ}/\Gr(2,4)} \oplus (T_s S \otimes \O_{\Sigma^{\circ}}) \cong \R^\vee|_{\Sigma^{\circ}} \oplus \O_{\Sigma^{\circ}}^2,
\end{equation*}
where $T_s S$ is the tangent space of $S$ at $s$. By applying (\ref{msec}), we get
\begin{equation*}
    N_{M^{\circ}/\Gr_S(2,\E)} \cong \Sym^2(\R^\vee)\otimes \pi_{\Gr}^*\L,
\end{equation*}
where $\pi_{\Gr}\colon \Gr_S(2, \E)\to S$ is the projection map. They imply
\begin{equation*}
    \det(N_{\Sigma^{\circ}/\Gr_S(2, \E)}) \cong \O_{\Sigma^{\circ}}(1), \quad \det((N_{M^{\circ}/\Gr_S(2,\E)})|_{\Sigma^{\circ}}) \cong \O_{\Sigma^{\circ}}(3).
\end{equation*}

Hence, $N_{\Sigma^{\circ}/M^{\circ}} \cong \det( N_{\Sigma^{\circ}/M^{\circ}}) \cong \O_{\Sigma^{\circ}}(-2)$.

(2) From (1), we have $\O_{\Sigma^{\circ}}(\Sigma) \cong \O_{\Sigma^{\circ}}(-2)$. We get 
\begin{align*}
    \O_{\Sigma^{\circ}}(-K_M) & \cong \O_{\Sigma^{\circ}}(-K_M-\Sigma) \otimes \O_{\Sigma^{\circ}}(\Sigma)\\
    & \cong \O_{\Sigma^{\circ}}(-K_{\Sigma}) \otimes \O_{\Sigma^{\circ}}(-2)\\
    & \cong \O_{\Sigma^{\circ}} (1).
\end{align*}
Since $M$ is Gorenstein, we have that $\O_{\Sigma}(-K_M)$ is a line bundle and the isomorphism extends to $\Sigma$.
\end{proof}

\begin{ex}\label{minresex}
Let $X =\{xy-zw=0\} \subset \P^4$ be the nodal quadric $3$-fold. It is a cone over $\P^1 \times \P^1$ with the vertex $O=\{x=y=z=w=0\}$. Let $\varphi\colon X \dashrightarrow \P^1 \times \P^1$ be the projection from the vertex. Write $\Sigma_t=\varphi^{-1}(\P^1\times \{t\}) \cong \P^2, t\in \P^1$. Let $Y$ be the minimal resolution of $X$. Then $Y\cong \P(\O_{\P^1}\oplus \O_{\P^1}(-1)^2)\subset \P^4 \times \P^1$ and $Y \cong \Bl_{\Sigma_t} X$ is the blow up of $X$ along $\Sigma_t$ for every $t\in \P^1$. Fix a point $0\in \P^1$ and let $\psi\colon X\dashrightarrow \P^1$ be the linear projection from $\Sigma_0$. The resolution of the indeterminacy of $\psi$ gives
\begin{equation*}
    \begin{tikzcd}[column sep = small]
    &  Y\arrow{ld}[swap]{f} \arrow{rd}{g} &\\
    X \arrow[dotted]{rr}{\psi} & & \P^1,
    \end{tikzcd}
\end{equation*}
where $f$ is the blow-up of $X$ along $\Sigma_0$. For every $t\in \P^1$, the pre-image $\ts_t=f^{-1}(\Sigma_t)$ is the Hirzebruch surface $\Bl_O(\Sigma_t)$, where $O\in \Sigma_t$ is the vertex of $X$. Let $H$ and $h$ be the pull-backs of hyperplane classes of $X$ and $\P^1$ to $Y$, respectively. Then we have the relation $\ts_t=H-h$. Let $l \cong \P^1$ be the exceptional locus of $f$. Then $\O_l(\ts_t) \cong \O_l(H-h) \cong \O_l(-1)$.
\end{ex}

The locus $S_2$ can be embedded into the double cover $\wt{S}$ and for $s\in S_2\subset \wt{S}$, we have that $M_s = \Sigma_s^+\cup \Sigma_s^-$ is also the scheme-theoretic fiber of $\tau\colon M\to \wt{S}$. Moreover, $\wt{S}$ has ordinary double points at $S_2$.

Recall $m_s=\Sigma_s^+\cap \Sigma_s^-$ and $M^{\circ} = M - \{m_s\}_{s\in S_2}$. By Lemma \ref{mgeom}, $M$ has a finite number of ordinary double points $\{m_s\}_{s\in S_2}$. That is, near the point $m_s$, the $3$-fold $M$ is the nodal quadric. Since $\Sigma_s^+$ and $\Sigma_s^-$ intersect at only one point, they are the planes lying over the same rulings of $\P^1\times \P^1$. Let 
\begin{equation*}
    \xi\colon \wt{M} \to M
\end{equation*}
be the blow-up of $M$ along $\bigsqcup_{s\in S_2} \Sigma_s^+$. Then $\wt{M}$ is a small resolution of $M$ and $\xi^{-1}(M^{\circ}) \to M^{\circ}$ is an isomorphism. 

Write $l_s =\xi^{-1}(m_s) \cong \P^1$ and let $*=\pm$. Example \ref{minresex} tells us that 
\begin{equation*}
    \ts_s^* := \xi^{-1}(\Sigma_s^*) \cong \Bl_{m_s} \Sigma_s^*
\end{equation*}
and $l_s$ is the $(-1)$-curve on $\Bl_{m_s} \Sigma_s^*$. Denote the fiber classes of the projection $\ts_s^* \to l_s$ by $h_s^*$. Then 
\begin{equation*}
    h_s^*\cdot l_s=1, \quad l_s^2=-1, \quad (h_s^*)^2=0
\end{equation*}
on $\ts_s^*$. In addition, $\xi^*\O_{\Sigma_s^*}(1) \cong \O_{\ts_s^*}(h_s^* +l_s)$.

\begin{prop}\label{birtran}
Assume $\kk$ is algebraically closed and $\cchar(\kk)=0$. 

(1) There exists a relative contraction map $\xi_+\colon \wt{M} \to M^+$ over $\wt{S}$ where $\xi_+$ is an isomorphism on $\wt{M}-\bigsqcup_{s\in S_2} \ts_s^+$ and $\xi_+|_{\ts_s^+}$ is the projection onto $l_s$. Namely, we have birational morphisms
\begin{equation*}
    M\xleftarrow{\xi} \wt{M} \xrightarrow{\xi_+} M^+
\end{equation*}
over $\wt{S}$ and the corresponding fibers over $s\in S_2\subset \wt{S}$ are
\begin{equation*}
    \Sigma_s^+\cup \Sigma_s^-\leftarrow \ts_s^+ \cup \ts_s^- \to \ts_s^-.
\end{equation*}
Furthermore, $M^+$ is smooth and $\xi_+$ is the blow-up of $M^+$ along $\bigsqcup_{s\in S_2} l_s$.

(2) Let $\eta\colon S^+=\Bl_{S_2} \wt{S} \to \wt{S}$ be the resolution of $\wt{S}$. Then the map $\tau'\colon M^+\to \wt{S}$ obtained from (1) fits into the commutative diagram
\begin{equation*}
    \begin{tikzcd}[column sep = small]
    & \wt{M}\arrow{ld}[swap]{\xi} \arrow{rd}{\xi_+} & \\
    M \arrow{d}[swap]{\tau} & & M^+ \arrow{d}{\tau_+} \arrow{lld}[swap]{\tau'}\\
    \wt{S} & & S^+ \arrow{ll}{\eta} 
    \end{tikzcd}
\end{equation*}
and $\tau_+$ is a smooth conic bundle.
\end{prop}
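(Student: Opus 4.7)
The plan is to establish (1) by a normal-bundle computation combined with a Fujiki--Nakano contraction, and to deduce (2) by checking the blow-up universal property for $\eta$ and verifying the smooth conic bundle structure fiberwise. For (1), I would compute $\mathcal{N}:=\mathcal{N}_{\ts_s^+/\wt M}$ via adjunction: since $\xi$ is a small resolution of the $3$-dimensional ordinary double point of $M$ at $m_s$, it is crepant, so $K_{\wt M}=\xi^*K_M$. Combining Lemma \ref{nblem}(2), which gives $K_M|_{\Sigma_s^+}\cong\O(-1)$, with the pullback $\pi^*\O_{\P^2}(1)\cong\O_{\mathbb{F}_1}(l_s+h_s^+)$ under the blow-down $\pi\colon \ts_s^+\cong \mathbb{F}_1\to\P^2\cong\Sigma_s^+$, and with $K_{\mathbb{F}_1}\cong \O(-2l_s-3h_s^+)$, adjunction yields $\mathcal{N}\cong \O(-l_s-2h_s^+)$, hence $\mathcal{N}|_{h_s^+}\cong \O(-1)$ and $\mathcal{N}|_{l_s}\cong \O(-1)$. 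Since $\ts_s^+\cong \P_{l_s}(\O\oplus\O(-1))$ is a $\P^1$-bundle over $l_s$ with $\mathcal{N}$ restricting to the tautological $\O(-1)$ on each fiber, the Fujiki--Nakano contraction criterion (applicable since $\kk$ is algebraically closed of characteristic $0$; equivalently, the relative Mori contraction of the $K$-negative extremal ray $[h_s^+]\subset \overline{NE}(\wt M/\wt S)$, noting $K_{\wt M}\cdot h_s^+=-1$) produces a smooth threefold $M^+$ and a projective birational morphism $\xi_+\colon\wt M\to M^+$ that contracts each $\ts_s^+$ onto $l_s$ via the $\mathbb{F}_1\to\P^1$ projection and is the blow-up of $M^+$ along $\bigsqcup_{s\in S_2}l_s$. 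Finally, since $\tau\circ\xi$ is constant on each contracted fiber $h_s^+$ (both endpoints map to $s$), rigidity of proper morphisms yields a unique factorization $\tau\circ\xi=\tau'\circ\xi_+$, so $\xi_+$ is defined over $\wt S$.

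For (2), I would verify the universal property of the blow-up $\eta\colon S^+\to\wt S$ at $S_2$: the ideal $(\tau')^{-1}(\I_{S_2})\cdot\O_{M^+}$ must be invertible on $M^+$. This is automatic outside $\tau'^{-1}(S_2)=\bigsqcup_{s\in S_2}\ts_s^-$, so work formally near $s\in S_2\subset\wt S$, where $\wt S$ is the $A_1$ singularity $\{uv-w^2=0\}$. The divisor $\ts_s^-\subset M^+$ is smooth Cartier, locally cut out by some $f$; writing $\tau'^*u=af$, $\tau'^*v=bf$, $\tau'^*w=cf$ with the resulting relation $ab=c^2$, invertibility of $(af,bf,cf)=f\cdot(a,b,c)$ amounts to showing that $(a:b:c)\colon \ts_s^-\to\{uv-w^2=0\}\subset\P^2$ has empty base locus. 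This morphism identifies with the $\P^1$-bundle projection $\ts_s^-\cong\mathbb{F}_1\to l_s$ produced in (1) (landing on the smooth conic $\P^1$), so it is surjective, giving the factorization $\tau_+\colon M^+\to S^+$. To conclude that $\tau_+$ is a smooth conic bundle: over $S^+\setminus\eta^{-1}(S_2)$ it identifies with $\tau\colon M\to\wt S$ over $\wt S\setminus S_2$, a smooth conic bundle by the simple-degeneration case of Section \ref{hilbsec}; over each exceptional curve $\eta^{-1}(s)\cong\P^1$, the preimage $\ts_s^-\cong\mathbb{F}_1$ is a $\P^1$-bundle over $l_s$ via the contraction, and $\tau_+$ identifies $l_s$ isomorphically with $\eta^{-1}(s)$, so every fiber of $\tau_+$ is a smooth $\P^1$; flatness over $S^+$ follows as all fibers are geometrically reduced of the same dimension.

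The main obstacle is the identification of the infinitesimal map $(a:b:c)\colon \ts_s^-\to\{uv-w^2=0\}$ with the canonical projection $\mathbb{F}_1\to l_s$ from part (1). Geometrically this expresses that the direction of approach of a point of $\ts_s^-$ to $s\in\wt S$ is exactly its image under the $\P^1$-bundle projection; rigorously one must match the differential of $\tau'$ in the normal direction to $\ts_s^-$ with the conic structure of the tangent cone of $\wt S$ at $s$. A concrete verification can be carried out by choosing a formal trivialization using the rank-$4$ quadratic-form presentation of $M$ near $m_s$ from the proof of Lemma \ref{mgeom}, running the blow-up and contraction explicitly on this local model, and matching with the global construction.
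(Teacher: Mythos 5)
Your approach to part (1) is essentially the paper's: you compute $N_{\ts_s^+/\wt M}\cong\O_{\ts_s^+}(-2h_s^+-l_s)$ (you via adjunction using crepancy of the small resolution and Lemma \ref{nblem}(2), the paper via Lemma \ref{nblem}(1) together with $\O_{l_s}(\ts_s^+)\cong\O(-1)$, both correctly), and then you invoke a contraction theorem plus Ando/Fujiki--Nakano for smoothness and the blow-up structure. The place where you are less careful than the paper is the existence and projectivity of the contraction itself. Citing the relative Mori contraction of ``the $K$-negative extremal ray $[h_s^+]$'' presupposes that $[h_s^+]$ spans an extremal ray (and that the classes $\{[h_s^+]\}_{s\in S_2}$ span an extremal face so that they can be contracted simultaneously); $K_{\wt M}\cdot h_s^+=-1<0$ alone does not establish extremality, since $K_{\wt M}\cdot h_s^-=-1$ too. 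The paper settles this by exhibiting the supporting $\wt\tau$-nef divisor $D=-K_{\wt M}-\sum_{s\in S_2}\ts_s^-$ with $D\cdot h_s^+=0$, $D\cdot h_s^-=2$, $D\cdot l_s=1$, so that $D-K_{\wt M}$ is $\wt\tau$-ample and the relative basepoint-free theorem produces $\xi_+$ directly; Ando/Fujiki--Nakano is then used only for smoothness of $M^+$. To make your version airtight you would either need to produce such a supporting divisor (at which point you are doing the paper's proof) or appeal to an algebraic/projective version of the Fujiki--Nakano blow-down criterion rather than the analytic one.

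Your part (2) is genuinely a different route. The paper asserts that the scheme-theoretic fiber $\tau'^{-1}(s)$ is the reduced Cartier divisor $\ts_s^-$, applies the universal property of blow-up, and then pins down $\tau_+|_{\ts_s^-}$ via the very clean calculation $N_{\ts_s^-/M^+}\cong\O_{M^+}(\ts_s^-)|_{\ts_s^-}\cong\O_{\wt M}(\ts_s^++\ts_s^-)|_{\ts_s^-}\cong\O_{\ts_s^-}(-2h_s^-)$, which identifies $(\tau_+|_{\ts_s^-})^*\O_{\P^1}(1)\cong\O_{\ts_s^-}(h_s^-)$. The smooth conic bundle structure then comes from descending $\O_{\wt M}(D)$ to a line bundle $\O_{M^+}(D^+)$ of relative degree $2$. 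You instead propose to verify $\tau'^*\mathfrak{m}_s=\I_{\ts_s^-}$ directly in coordinates $(u,v,w)$ on the $A_1$-singularity, reducing to showing $(a,b,c)$ has no common zero and coincides with the ruling $\ts_s^-\cong\mathbb{F}_1\to l_s$. This is a sound plan and is actually more explicit about the subtle point that the paper's ``Cartier divisor'' assertion glosses over (one must rule out that $\tau'^*\mathfrak{m}_s$ sits in $\I_{\ts_s^-}^2$ or fails to be invertible along $l_s$), but as you yourself note, you have not carried out the local verification; the rank computation of $d\tau$ at smooth points of $M_s$ using the explicit model from Lemma \ref{mgeom} does establish reducedness away from $m_s$, but you would still need to control the behavior along $l_s\subset M^+$. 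The paper's normal-bundle calculation (\ref{nbm+}) is the shorter and more self-contained way to reach the $\P^1$-bundle identification once the factorization is granted. Your miracle-flatness argument for the conic bundle structure (regular base, CM total space, equidimensional fibers, plus $\omega_{M^+/S^+}^{-1}$ as the degree-$2$ line bundle) is correct and is a perfectly good substitute for the paper's explicit descent of $D$.

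In short: part (1) is the same idea with a gap around extremality/projectivity of the contraction that the paper's divisor $D$ is designed to fill; part (2) is a legitimately different, more coordinate-heavy plan that you have not completed, where the paper's $\xi_+^*\O_{M^+}(\ts_s^-)\cong\O_{\wt M}(\ts_s^++\ts_s^-)$ trick gives the normal bundle for free.
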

\begin{proof}
Denote by $\wt{S}^{\circ}=\wt{S}\backslash S_2$ and let $*=\pm$.

(1) By Lemma \ref{nblem} (1), $N_{\ts_s^*/\wt{M}} \cong \O(-2h_s^*+nl_s)$ for some $n\in\ZZ$. Since $\O_l(\ts_s^*) \cong \O_{l_s}(-1)$, we have $n=-1$ and 
\begin{equation}\label{nbeq}
    N_{\ts_s^*/\wt{M}} \cong \O_{\ts_s^*}(\ts_s^*) \cong \O_{\ts_s^*}(-2h_s^*-l_s).
\end{equation}
Let 
\begin{equation}\label{condiv}
    D=-K_{\wt{M}}-\sum_{s\in S_2}\ts_s^- = -\xi^*K_{M}-\sum_{s\in S_2}\ts_s^-.
\end{equation}
Denote $\wt{\tau}=\tau\circ\xi\colon \wt{M} \to \wt{S}$. We claim that $D$ is $\wt{\tau}$-nef and $D-K_{\wt{M}}$ is $\wt{\tau}$-ample. See \cite[Tag 01VH]{stacks-project} for the definition of relative ampleness and recall that a divisor is relative nef if its intersection with every curve in the fiber is non-negative.

Note that $\wt{\tau}^{-1}(\wt{S}^{\circ})\cong \tau^{-1}(\wt{S}^{\circ}) \to \wt{S}^{\circ}$ is a smooth conic bundle and $D|_{\wt{\tau}^{-1}(\wt{S}^{\circ})} =-K_{\wt{\tau}^{-1}(\wt{S}^{\circ})}$ is relative ample. It suffices to study $D$ and $D-K_{\wt{M}}$ on $\ts_s^+\cup \ts_s^-$. The relative nefness and ampleness can be checked on curve classes $h_s^{\pm}$ and $l_s$. By Lemma \ref{nblem} (2),
\begin{equation*}
    \O_{\ts_s^*}(-K_{\wt{M}}) \cong \xi^* \O_{\Sigma_s^*}(-K_M) \cong \O_{\ts_s^*}(h_s^*+l_s).
\end{equation*}
Hence, $(-K_{\wt{M}})\cdot h_s^* = 1$, $(-K_{\wt{M}})\cdot l_s=0$ and
\begin{equation}\label{nefness}
    D\cdot h_s^+=0, \quad D\cdot h_s^-=2, \quad D\cdot l_s=1.
\end{equation}
From these, we get $D$ is relative nef, and by Kleiman's ampleness criterion (Theorem 1.44 in \cite{kmbirgeo}), $D-K_{\wt{M}}$ is relative ample. By relative basepoint-free theorem (Theorem 3.24 in \cite{kmbirgeo} and this is where $\cchar(\kk)$=0 is used), $mD$ is $\wt{\tau}$-free for $m\gg 0$. We can construct $\xi_+\colon \wt{M} \to M^+$ by taking the Stein factorization of $|mD|, m\gg 0$. From Equations (\ref{nefness}), we get that $\xi_+$ has the required properties. From (\ref{nbeq}), we get $N_{\ts_s^+/\wt{M}}|_{h_s^+} \cong \O_{h_s^+}(-1)$. Then \cite{funainv} or \cite[Theorem 2.3]{andoexray} implies that $M^+$ is smooth and $\xi_+$ is the blow-up.

(2) Note that from (1), we have $M^+$ is a smooth $3$-fold. The fiber 
of $\tau'\colon M^+\to \wt{S}$ over $s\in S_2$ is a Cartier divisor $\ts_s^-$ of $M^+$. From the universal property of blowing up, we get that $\tau'$ factors as $M^+\xrightarrow{\tau_+} S^+\xrightarrow{\eta} \wt{S}$. By construction, $\eta^{-1}(\wt{S}^{\circ})\cong \wt{S}^{\circ}$, and $\tau_+^{-1}(\wt{S}^{\circ}) \to \wt{S}^{\circ}$ is isomorphic to $\tau^{-1}(\wt{S}^{\circ}) \to \wt{S}^{\circ}$. We claim that $\tau_+|_{\ts_s^-}\colon \ts_s^-\to \eta^{-1}(s)$ is the projection from the Hirzebruch surface onto $\P^1$.

Note that $\xi_+^{-1}(\ts_s^-)=\ts_s^+ \cup \ts_s^-$ and $\xi_+|_{\ts_s^-}$ is an isomorphism. Then from (\ref{nbeq}), we get
\begin{equation}\label{nbm+}
    N_{\ts_s^-/M^+} \cong \O_{M^+}(\ts_s^-)|_{\ts_s^-}\cong \O_{\wt{M}}(\ts_s^+ + \ts_s^-)|_{\ts_s^-} \cong \O_{\ts_s^-}(-2h_s^-).
\end{equation}
By construction of $\tau_+$, the pull-back of $\O_{\eta^{-1}(s)}(1)\cong \O_{\P^1}(2)$ is the conormal bundle $N_{\ts_s^-/M^+}^\vee$. Thus, the pull-back of $\O_{\P^1}(1)$ is $\O_{\ts_s^-}(h_s^-)$ and $\tau_+|_{\ts_s^-}$ is the said projection.

Lastly, we show $\tau_+$ is a smooth conic bundle. Recall that a map $f\colon X\to Y$ is a smooth conic bundle if each geometric fiber $X_y$ is $\P^1$ and there exists a line bundle $L$ on $X$ such that $L|_{X_y}\cong \O_{\P^1}(2)$. We have seen the geometric fibers of $\tau_+$ are $\P^1$'s. Now we will show that there is a line bundle $L$ on $M^+$ such that $\xi_+^*L \cong \O_{\wt{M}}(D)$ with $D$ defined in (\ref{condiv}) and $L$ makes $\tau_+$ a smooth conic bundle.

Since $\xi_+$ is a smooth blow-up from (1), the SOD of $\D(\wt{M})$ obtained from the blow-up formula implies that $\O_{\wt{M}}(D)$ is the pull-back of a line bundle $L$ on $M^+$ if $\O_{\wt{M}}(D)|_{\ts_s^+}$ is the pull-back of a line bundle on $l_s$. Computations in (1) give
\begin{equation*}
    \O_{\ts_s^+}(D) \cong  \O_{\ts_s^+}(h_s^+) \cong \xi_+^* \O_{l_s}(1), \quad \O_{\ts_s^-}(D) \cong \O_{\ts_s^-}(3h_s^-+2l_s).
\end{equation*}
Therefore, such $L$ exists and write $L=\O_{M^+}(D^+)$. Note that $\O_M(-K_M)$ restricted to $\tau^{-1}(\wt{S}^{\circ})$ makes $\tau^{-1}(\wt{S}^{\circ}) \to \wt{S}^{\circ}$ a smooth conic bundle. Since $\O_{M^+}(D^+)$ and $\O_M(-K_M)$ restricted to $\tau_+^{-1}(\wt{S}^{\circ}) \cong \tau^{-1}(\wt{S}^{\circ})$ are isomorphic and
\begin{equation*}
    D^+\cdot h_s^- =D \cdot h_s^-= 2,
\end{equation*}
we have that $\O_{M^+}(D^+)$ makes $\tau_+$ a smooth conic bundle.
\end{proof}

For the rest of the section, we will show that $\A_{\Q}\cong \D(S^+, \A^+)$ where $\A^+$ is Brauer equivalent to the Azumaya algebra corresponding to $\tau_+\colon M^+\to S^+$. Recall that $\R$ is the universal subbundle on $\Gr_S(2,\E)$. Let $\R_M$ be the restriction of $\R$ on $M$. Recall the associated left Clifford ideals $\I_n^{\R_M}, n\in\ZZ$ in Definition \ref{clideal}.

\begin{lemma}\label{ires}
For every $n\in \ZZ$ and $*=\pm$, we have  $(\xi^*\I_n^{\R_M})|_{\ts_s^*}\cong \O_{\ts_s^*} \oplus \O_{\ts_s^*}(-h_s^*-l_s)$.
\end{lemma}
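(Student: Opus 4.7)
The plan is to identify $\I_n^{\R_M}|_{\Sigma_s^*}$ explicitly using the filtration (\ref{b01fil}) of $\B_n$, and then pull back along the blow-down $\xi\colon \ts_s^* \to \Sigma_s^*$ using the identity $\xi^*\O_{\Sigma_s^*}(-1) \cong \O_{\ts_s^*}(-h_s^*-l_s)$ recorded just before the lemma. To reduce the work, first note that Lemma \ref{mulisolem}(1) combined with the natural isomorphism $\B_{n+2} \cong \B_n \otimes \L$ gives $\I_{n+2}^{\R_M} \cong \I_n^{\R_M} \otimes \rho^*\L$; since $\rho \circ \xi$ maps $\ts_s^*$ to the single point $s \in S$, $\rho^*\L|_{\ts_s^*}$ is trivial, and it suffices to treat $n=0$ and $n=1$. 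I next set up the geometry on $\Sigma_s^*$: this component of $M_s$ parametrizes lines in one of the two maximal isotropic $3$-dimensional subspaces $W \subset \E_s$ of the corank-$2$ quadric $\Q_s$, so $\Sigma_s^* \cong \Gr(2,W) \cong \P^2$ via Plücker. The universal Grassmannian sequence then yields $\det \R_M|_{\Sigma_s^*} \cong \O_{\P^2}(-1)$ and $(W \otimes \O)/\R_M|_{\Sigma_s^*} \cong \O_{\P^2}(1)$; extending by a line complement of $W$ in $\E_s$ and using $\Ext^1_{\P^2}(\O,\O(1)) = 0$, I obtain $(\rho^*\E/\R_M)|_{\Sigma_s^*} \cong \O(1) \oplus \O$.

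For $n=0$, the proof of Lemma \ref{genb0lem}(1) does not use the hypothesis $S_2 \neq S$ and hence produces on all of $M$ the short exact sequence
\[
0 \to \det\R_M \otimes \rho^*\L^\vee \to \I_0^{\R_M} \to \rho^*(\det\E \otimes (\L^\vee)^2) \to 0
\]
induced by the filtration of $\B_0$. Restricting to $\Sigma_s^*$ trivialises the pullbacks from $S$, leaving $0 \to \O_{\P^2}(-1) \to \I_0^{\R_M}|_{\Sigma_s^*} \to \O_{\P^2} \to 0$ on $\P^2$. This splits because $\Ext^1_{\P^2}(\O, \O(-1)) = 0$, so $\I_0^{\R_M}|_{\Sigma_s^*} \cong \O \oplus \O(-1)$, and pulling back by $\xi$ yields the claim.

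For $n=1$, I apply the analogous filtration analysis with the sequence $0 \to \rho^*\E \to \B_1 \to \rho^*(\Lambda^3 \E \otimes \L^\vee) \to 0$. Tracking the Clifford multiplication $\xi \otimes (w_1 w_2) \mapsto \xi \cdot w_1 w_2$ at the symbol level, the image of $\I_1^{\R_M}$ in the top quotient $\rho^*(\Lambda^3 \E \otimes \L^\vee)$ is the wedge subsheaf $\det\R_M \wedge \rho^*\E \otimes \rho^*\L^\vee \cong \det\R_M \otimes \rho^*(\E/\R_M) \otimes \rho^*\L^\vee$, which has rank $2$. Since $\I_1^{\R_M}$ itself has rank $2$ (by the rank formula in the proof of Lemma \ref{clidealdual}) and is torsion-free as a subsheaf of the locally free sheaf $\B_1$, the kernel $\I_1^{\R_M} \cap \rho^*\E$ has generic rank zero and must vanish, yielding $\I_1^{\R_M} \cong \det\R_M \otimes \rho^*(\E/\R_M) \otimes \rho^*\L^\vee$. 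Restricting to $\Sigma_s^*$ gives $\O(-1) \otimes (\O(1) \oplus \O) \cong \O \oplus \O(-1)$, and pulling back by $\xi$ concludes the proof. The main technical hurdle is the Clifford bookkeeping for $n=1$: I must check that the higher graded piece $\rho^*(\Lambda^3 \E \otimes (\L^\vee)^2)$ of $\B_{-1}$ adds no new content to the image beyond the wedge subsheaf above. This reduces to a local calculation using the isotropy identity $(w_1 w_2)^2 = -q(w_1) q(w_2) = 0$, which forces several Clifford monomials to collapse and keeps the total rank of $\I_1^{\R_M}$ equal to $2$.
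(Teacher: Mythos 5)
Your approach matches the paper's: identify $\I_n^{\R_M}|_{\Sigma_s^*}$ via the Clifford-ideal filtration induced by (\ref{b01fil}) together with the Grassmannian geometry of $\Sigma_s^*\cong\Gr(2,3)\subset\Gr(2,4)$, and then pull back along $\xi$ using $\xi^*\O_{\Sigma_s^*}(-1)\cong\O_{\ts_s^*}(-h_s^*-l_s)$. The only deviation is that the paper simply reads off $\I_{2m+1}^{\R_M}\cong\det\R_M\otimes(\rho^*\E/\R_M)\otimes\rho^*\L^{m-1}$ from the induced filtration, while you re-derive it for $m=0$ from scratch, which creates two things worth flagging in the $n=1$ step.

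First, the stated identity $(w_1w_2)^2=-q(w_1)q(w_2)$ omits the cross term $b_q(w_1,w_2)\,w_1w_2$; all three summands vanish here because $\R_M$ is isotropic, so the conclusion is unharmed, but the formula as written is not a Clifford identity. Second, and more substantively, the ``at the symbol level'' reasoning only gives containment $\supseteq$ of the image of $\I_1^{\R_M}$ in $\Lambda^3\E\otimes\L^\vee$ by the wedge subsheaf $\det\R_M\wedge\rho^*\E\otimes\rho^*\L^\vee$ (from the generators $e\cdot\det\R_M$); the reverse containment is exactly the deferred local calculation, since top symbols of products coming from the $\Lambda^3\E\otimes(\L^\vee)^2$ piece of $\B_{-1}$ lie in $\Lambda^5=0$ and therefore say nothing about the component in $\Lambda^3\E\otimes\L^\vee$. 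The vanishing of the kernel $\I_1^{\R_M}\cap\rho^*\E$ does not by itself bound the image from above. You can close this gap without any Clifford bookkeeping: once the kernel vanishes, $\I_1^{\R_M}$ injects into $\Lambda^3\E\otimes\L^\vee$ with rank-$2$ image containing the rank-$2$ subbundle $\det\R_M\wedge\rho^*\E\otimes\rho^*\L^\vee$; since $\R_M\subset\rho^*\E$ is a subbundle, the quotient $(\Lambda^3\E\otimes\L^\vee)/(\det\R_M\wedge\rho^*\E\otimes\rho^*\L^\vee)$ is locally free, so the induced rank-$0$ subsheaf coming from the image must be zero, giving equality of the image with the wedge subbundle and hence $\I_1^{\R_M}\cong\det\R_M\otimes(\rho^*\E/\R_M)\otimes\rho^*\L^\vee$. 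With that patch the rest of your argument goes through exactly as in the paper.
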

\begin{proof}
As with the filtration (\ref{i0fil}), for $m\in\ZZ$, we have the short exact sequence
\begin{equation*}
    0\to \det(\R_M) \otimes \rho^*\L^{m-1} \to \I_{2m}^{\R_M} \to \rho^*(\det(\E)\otimes \L^{m-2}) \to 0
\end{equation*}
and $\I_{2m+1}^{\R_M}\cong \det(\R_M)\otimes (\rho^*\E/\R_M) \otimes \rho^*\L^{m-1}$. Note that $\Sigma_s^*\cong \Gr(2,3)\subset \Gr(2,4)$ and the universal quotient on $\Gr(2,4)$ restricted to $\Gr(2,3)$ is $\O\oplus \O(1)$. Then for every $n\in \ZZ$ we get $\I_n^{\R_M}|_{\Sigma_s^*}\cong \O \oplus \O(-1)$ and the result follows.
\end{proof}

\begin{lemma}\label{jlem}
For every $n\in \ZZ$, there exists a rank $2$ vector bundle $\J_n$ on $M^+$ that fits into short exact sequences
\begin{equation}\label{ijseq}
    0\to \xi_+^*\J_n\to \xi^*\I_n^{\R_M} \to \bigoplus_{s\in S_2} \O_{\ts_s^+}(-h_s^+ -l_s) \to 0,
\end{equation}
\begin{equation}\label{ijdual}
    0\to \xi^*(\I_n^{\R_M})^\vee \to \xi_+^*(\J_n)^\vee\to \bigoplus_{s\in S_2} \O_{\ts_s^+}(-h_s^+) \to 0.
\end{equation}
\end{lemma}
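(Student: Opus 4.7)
My plan is to realize $\xi_+^*\J_n$ as the kernel of a canonical surjection on $\wt{M}$, and verify that this kernel descends along $\xi_+$. Lemma \ref{ires} gives $(\xi^*\I_n^{\R_M})|_{\ts_s^+} \cong \O_{\ts_s^+} \oplus \O_{\ts_s^+}(-h_s^+-l_s)$; since $H^0(\ts_s^+, \O(-h_s^+-l_s)) = 0$, the $\O_{\ts_s^+}$-summand is canonical, and this produces a canonical (up to scalar) surjection $(\xi^*\I_n^{\R_M})|_{\ts_s^+} \twoheadrightarrow \O_{\ts_s^+}(-h_s^+-l_s)$. Composing with the restriction map and summing over $s\in S_2$ yields a surjection
\[
\psi\colon \xi^*\I_n^{\R_M}\twoheadrightarrow \bigoplus_{s\in S_2} j_{s*}\O_{\ts_s^+}(-h_s^+-l_s)
\]
on $\wt{M}$, where $j_s\colon \ts_s^+\hookrightarrow \wt{M}$ are the inclusions. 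Set $\F_n := \ker\psi$. Locally near $\ts_s^+$, where $\xi^*\I_n^{\R_M}\cong \O_{\wt{M}}^{\oplus 2}$ and $\psi$ has the shape $(a,b)\mapsto b\bmod t$ for a local equation $t$ of $\ts_s^+$, the kernel is locally generated by $e_1$ and $t\,e_2$, so $\F_n$ is locally free of rank two and (\ref{ijseq}) is the resulting short exact sequence.

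Next I check $\F_n = \xi_+^*\J_n$ for the rank two vector bundle $\J_n := (\xi_+)_*\F_n$ on $M^+$. By the blow-up formula \cite[Theorem 6.11]{jquot-1} applied to the smooth blow-up $\xi_+\colon \wt{M}\to M^+$ along $\bigsqcup_{s\in S_2} l_s$, a locally free sheaf on $\wt{M}$ lies in the essential image of $L\xi_+^*$ precisely when its restriction to each $\ts_s^+$ is pulled back from $l_s$ via the $\P^1$-bundle $\xi_+|_{\ts_s^+}$, whose fiber class is $h_s^+$. Restricting the defining sequence of $\F_n$ to $\ts_s^+$ produces the Tor contribution
\[
\operatorname{Tor}_1^{\O_{\wt{M}}}\bigl(j_{s*}\O_{\ts_s^+}(-h_s^+-l_s),\O_{\ts_s^+}\bigr) \cong \O_{\ts_s^+}(-h_s^+-l_s)\otimes N^\vee_{\ts_s^+/\wt{M}} \cong \O_{\ts_s^+}(h_s^+),
\]
using $N_{\ts_s^+/\wt{M}}\cong \O_{\ts_s^+}(-2h_s^+-l_s)$ from (\ref{nbeq}). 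This gives a short exact sequence $0\to \O_{\ts_s^+}(h_s^+)\to \F_n|_{\ts_s^+}\to \O_{\ts_s^+}\to 0$, which splits because
\[
\Ext^1\bigl(\O_{\ts_s^+},\O_{\ts_s^+}(h_s^+)\bigr) = H^1\bigl(\ts_s^+,\O(h_s^+)\bigr) = H^1\bigl(l_s,\O_{l_s}(1)\bigr) = 0
\]
by the projection formula for $\xi_+|_{\ts_s^+}$. Both summands are pullbacks from $l_s$, so $\F_n|_{\ts_s^+}\cong (\xi_+|_{\ts_s^+})^*(\O_{l_s}\oplus \O_{l_s}(1))$, and $\F_n$ descends to the claimed $\J_n$.

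Finally, (\ref{ijdual}) is obtained by applying $\sHom(-,\O_{\wt{M}})$ to (\ref{ijseq}). Since $\xi^*\I_n^{\R_M}$ and $\xi_+^*\J_n$ are locally free, they contribute no higher $\mathscr{E}xt$ into $\O_{\wt{M}}$. From the Koszul resolution $0\to \O_{\wt{M}}(-\ts_s^+)\to \O_{\wt{M}}\to j_{s*}\O_{\ts_s^+}\to 0$ together with (\ref{nbeq}),
\[
\mathscr{E}xt^1\bigl(j_{s*}\O_{\ts_s^+}(-h_s^+-l_s),\O_{\wt{M}}\bigr) \cong j_{s*}\bigl(\O_{\ts_s^+}(h_s^++l_s)\otimes N_{\ts_s^+/\wt{M}}\bigr) \cong j_{s*}\O_{\ts_s^+}(-h_s^+),
\]
and $\mathscr{E}xt^{\geqslant 2}(j_{s*}\O_{\ts_s^+},\O_{\wt{M}}) = 0$ since $\ts_s^+$ is a Cartier divisor. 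The long exact $\mathscr{E}xt$-sequence collapses to exactly (\ref{ijdual}). The only subtle point is the descent verification in the second step: checking $\F_n|_{\ts_s^+}$ is a pullback from $l_s$ rests on the precise normal bundle formula (\ref{nbeq}) and the $H^1$-vanishing on the Hirzebruch surface $\ts_s^+\cong \mathbb{F}_1$; the construction of $\psi$ and the dualization are otherwise routine.
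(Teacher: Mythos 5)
Your proof is correct and follows essentially the same strategy as the paper's: construct the surjection $\psi$ using Lemma \ref{ires}, show the kernel has the right restriction to each $\ts_s^+$ by computing the $\operatorname{Tor}_1$ contribution via (\ref{nbeq}) so that it descends along the smooth blow-up $\xi_+$, and then dualize (\ref{ijseq}) using $R\sHom(j_{s*}\O_{\ts_s^+}(-h_s^+-l_s),\O_{\wt{M}})$ to obtain (\ref{ijdual}). Your write-up supplies a few details the paper leaves implicit (the canonicity of the $\O_{\ts_s^+}$-summand, the explicit $H^1$-vanishing that splits the restricted extension, and the local-freeness of the kernel), but the underlying argument is the same.
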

\begin{proof}
By Lemma \ref{ires}, we have $(\xi^*\I_n^{\R_M})|_{\ts_s^+}\cong \O \oplus \O(-h_s^+-l_s)$. We can construct a surjection
\begin{equation*}
     \xi^*\I_n^{\R_M} \twoheadrightarrow (\xi^*\I_0^{\R_M})|_{\ts_s^+} \twoheadrightarrow \bigoplus_{s\in S_2} \O_{\ts_s^+}(-h_s^+ -l_s)
\end{equation*}
and denote its kernel by $\F$. Restricting this short exact sequence to $\ts_s^+$, we have
\begin{equation*}
    0 \to \O_{\ts_s^+}(-h_s^+ -l_s) \otimes N_{\ts_s^+/\wt{M}}^\vee \to \F|_{\ts_s^+} \to \O_{\ts_s^+}\oplus \O_{\ts_s^+}(-h_s^+ -l_s)\to \O_{\ts_s^+}(-h_s^+ -l_s) \to 0.
\end{equation*}
Since $N_{\ts_s^+/\wt{M}}^\vee \cong 
O_{\ts_s^+}(2h_s^+ +l_s)$ by (\ref{nbeq}), we have
\begin{equation}\label{jres}
    \F|_{\ts_s^+} \cong \O_{\ts_s^+} \oplus \O_{\ts_s^+}(h_s^+)    
\end{equation}
and this is the pull-back of $\O_{l_s}\oplus \O_{l_s}(1)$. Since $\xi_+\colon \wt{M} \to M^+$ is a smooth blow-up along $\bigsqcup l_s$ with exceptional locus $\bigsqcup \ts_s^+$ by Proposition \ref{birtran} (1), we deduce $\F\cong \xi_+^*\J_n$ for some rank $2$ bundle $\J_n$ on $M^+$. This gives (\ref{ijseq}). Since 
\begin{equation*}
    R\sHom(\O_{\ts_s^+}(-h_s^+ -l_s), \O_{\wt{M}}) \cong R\sHom(\O_{\ts_s^+}(-h_s^+ -l_s), N_{\ts_s^+/\wt{M}}[-1])\cong \O_{\ts_s^+}(-h_s)[-1],
\end{equation*}
we get (\ref{ijdual}) by taking the dual of (\ref{ijseq}).
\end{proof}

From Proposition \ref{birtran} (2), we get commutative diagrams
\begin{equation}\label{commdiag}
    \begin{tikzcd}[sep = small]
    & \wt{M}\arrow{ld}[swap]{\xi} \arrow{rd}{\xi_+} & \\
    M \arrow{dd}[swap]{\tau} \arrow{rd}{\rho} & & M^+ \arrow{dd}{\tau_+} \arrow{ld}[swap]{\rho_+}\\
    & S & \\
    \wt{S} \arrow{ru}[swap]{\alpha} & & S^+ \arrow{ll}{\eta} \arrow{lu}{\alpha_+}
    \end{tikzcd}
\end{equation}

\begin{lemma}\label{ijpushf}
For every $n\in \ZZ$,

(1) $R\rho_*\I_n^{\R_M}=0$ and $R\rho_*\sEnd(\I_n^{\R_M})\cong \B_0$ as sheaves of algebras;

(2) $R(\rho_+)_*\J_n=0$ and $R(\rho_+)_*\sEnd(\J_n)\cong \B_0$ as sheaves of algebras.
\end{lemma}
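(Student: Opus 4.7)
Both claims are local on $S$, so étale-locally I may assume $p$ admits a smooth section (fibers of $p$ have corank at most $2$ by Lemma~\ref{bealem}, hence contain smooth $\kk$-points, and smoothness of $\Q$ allows us to extend such a point to an étale-local section). Fix such a section, let $Z\subset M$ parametrize lines through it, and set $\beta=\rho|_Z$. Corollary~\ref{genb0cor} then supplies $\B_0\cong R\beta_*\sEnd(\I_n^{\R_Z})$, and this will be the final input.

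For (1), the plan is to push forward $\sEnd(\I_n^{\R_M})$ along the conic-bundle factor $\tau\colon M\to\wt{S}$ and identify the result with $\sEnd(\I_n^{\R_Z})$ on $\wt{S}$; since $\alpha$ is finite, composing with $\alpha_*$ and invoking Corollary~\ref{genb0cor} gives $R\rho_*\sEnd(\I_n^{\R_M})\cong\B_0$, and $R\rho_*\I_n^{\R_M}=0$ follows in parallel. Over $\wt{S}\setminus S_2$ the map $\tau$ is a smooth conic bundle and $M\cong\P_Z(\I_0^{\R_Z})$ by Lemma~\ref{sdb0lem}(1); the identity $\I_n^{\R_M}|_Z=\I_n^{\R_Z}$ (immediate from naturality of the Clifford ideal and $\R_M|_Z=\R_Z$), together with the tautological sequence on $\P_Z(\I_0^{\R_Z})$, yields the two identifications away from $S_2$. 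To extend across the finite set $S_2$, I would work in a formal neighbourhood of $s\in S_2$ using the explicit equations (\ref{meq1})--(\ref{meq2}) from the proof of Lemma~\ref{mgeom} and check the vanishing and the isomorphism on the corank-$2$ fibers $M_s=\Sigma_s^+\cup\Sigma_s^-$ directly.

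For (2), push the sequences (\ref{ijseq}) and (\ref{ijdual}) of Lemma~\ref{jlem} forward along $\xi$ and $\xi_+$ and then down to $S$. Since $\xi_+$ is the smooth blow-up of $\bigsqcup_s l_s$ with exceptional divisor $\bigsqcup_s\ts_s^+$, we have $R(\xi_+)_*(\xi_+^*\J_n)\cong\J_n$ and $R(\xi_+)_*(\xi_+^*\J_n^\vee)\cong\J_n^\vee$. The error sheaves $\O_{\ts_s^+}(-h_s^+-l_s)$ and $\O_{\ts_s^+}(-h_s^+)$ on $\ts_s^+\cong\mathbb{F}_1$ are fiberwise $\O(-1)$ for the projection to $l_s$ (respectively to $\Sigma_s^+$), so $R(\xi_+)_*$ (respectively $R\xi_*$) kills them, and a fortiori so do their direct images all the way down to $S$. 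Combined with part (1) this yields $R(\rho_+)_*\J_n\cong R\rho_*\I_n^{\R_M}=0$. For the endomorphism sheaves, tensor (\ref{ijseq}) by $\xi^*(\I_n^{\R_M})^\vee$ and (\ref{ijdual}) by $\xi_+^*\J_n$; the resulting exact triangles have only error terms supported on $\ts_s^+$ which again push forward to zero, so $R(\rho_+)_*\sEnd(\J_n)\cong R\rho_*\sEnd(\I_n^{\R_M})\cong\B_0$. The algebra-structure compatibility is preserved because the comparison maps arise from restriction/inclusion in Lemma~\ref{jlem}.

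The main obstacle will be the analysis over the nodes $\{m_s\}_{s\in S_2}$: verifying that the conic-bundle pushforward identifications extend across the singular fibers $M_s$ and that the algebra structure on $\sEnd(\I_n^{\R_M})$ is transported correctly through the birational modifications $\xi$ and $\xi_+$.
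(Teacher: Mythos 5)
Your proposal diverges from the paper in both parts, and each part has a real issue.

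\medskip
\textbf{Part (1).} The paper's argument here is short and purely formal: it identifies $\I_n^{\R_M}$ with (a twist of) the sheaf $\fS_{-n}^\vee$ studied by Kuznetsov in \cite{kuzline}, by comparing the presentation $\R_M\otimes \rho^*\B_{n-1}\to\rho^*\B_n\to\fS_n\to 0$ with Lemma~\ref{coklem} and then applying Lemma~\ref{clidealdual}; the vanishing $R\rho_*\I_n^{\R_M}=0$ and the algebra isomorphism $R\rho_*\sEnd(\I_n^{\R_M})\cong\B_0$ are then literally Corollaries~3.5 and~3.6 of \cite{kuzline}, which hold without any simple-degeneration hypothesis. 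Your proposed route (\'etale-local reduction to a smooth section, pushforward along $\tau$, invoke Corollary~\ref{genb0cor}) is genuinely different, but it stops at the essential point: the identification of $\tau_*\sEnd(\I_n^{\R_M})$ with a sheaf supported on $\wt S$ is only sketched over $\wt S\setminus S_2$, where $Z\cong\wt S$ and $M\cong\P_Z(\I_0^{\R_Z})$. Over $S_2$, $Z$ is a small resolution of $\wt S$ rather than isomorphic to it, $\tau$ has $\P^2\cup\P^2$ fibers, and $\sEnd(\I_n^{\R_Z})$ is not a priori a sheaf on $\wt S$ at all; you say you ``would check'' the extension in a formal neighbourhood but this is where all the difficulty lies, and it is precisely what the citation of \cite{kuzline} makes unnecessary. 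As written, (1) is not proved.

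\medskip
\textbf{Part (2).} Your strategy (push the sequences of Lemma~\ref{jlem} through $R\rho_* R\xi_* = R(\rho_+)_*R(\xi_+)_*$ and kill the error terms) is the same as the paper's, but the specific vanishing you assert for the $\O_{\ts_s^+}(-h_s^+)$ term is false. The map $\xi|_{\ts_s^+}\colon\ts_s^+\to\Sigma_s^+$ is a smooth blowdown, not a $\P^1$-fibration, so ``fiberwise $\O(-1)$'' only sees the one curve $l_s$. Writing $h_s^+ = \xi^*L - l_s$, one computes via the projection formula and $R\xi_*\O(l_s)\cong\O_{\Sigma_s^+}$ that
\begin{equation*}
R\xi_*\,\O_{\ts_s^+}(-h_s^+)\;\cong\;\O_{\Sigma_s^+}(-1)\;\neq\;0.
\end{equation*}
What actually vanishes is the further pushforward to $S$, i.e.\ the total cohomology $H^\bullet(\ts_s^+,\O(-h_s^+))$ (and likewise $H^\bullet(\ts_s^+,\O(-h_s^+-l_s))$ and $H^\bullet(\ts_s^+,\O(-2h_s^+-l_s))$). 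The paper obtains this cleanly by realising $\ts_s^+$ as a $(1,1)$-divisor $X\subset\P^2\times\P^1$ and showing $H^\bullet(X,\O_X(a,b))=0$ for $a=-1$ or $(a,b)=(0,-1)$; in that dictionary $\O(-h_s^+)=\O_X(0,-1)$, $\O(-h_s^+-l_s)=\O_X(-1,0)$, and $\O(-2h_s^+-l_s)=\O_X(-1,-1)$. So the conclusion you reach is correct, but the intermediate claim $R\xi_*\O_{\ts_s^+}(-h_s^+)=0$ is wrong; you need to push all the way to $S$ before the relevant $\O(-h_s^+)$ contribution disappears.
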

\begin{proof}
(1) In \cite[\S 3]{kuzline}, the right $\B_0$-module $\fS_n$ is constructed as the cokernel of $\R_M\otimes \rho^*\B_{n-1}\to \rho^*\B_n$. Comparing it with Lemma \ref{coklem}, we have $\fS_n\cong \I_n^{\circ \R_M}\otimes\det(\R_M^\vee)\otimes \rho^*\L$. By Lemma \ref{clidealdual}, we have
\begin{equation*}
    \I_n^{\R_M}\cong \fS_{-n}^\vee \otimes \rho^*(\det(\E)\otimes (\L^\vee)^2).
\end{equation*}
Then Corollary 3.5, 3.6 in \textit{loc. cit.} give (1).

(2) Write $\I_n=\I_n^{\R_M}$. Observe from the diagram (\ref{commdiag}) that $\rho_+\xi_+=\rho\xi$ restricted to $\ts_s^+$ is the map $\ts_s^+ \to s=\Spec(\kk)$. We first recall some results on the blow-up $X$ of $\P^2$ at a point that are needed for the proof. The blow-up $X\subset \P^2\times \P^1$ is a divisor $\O_{\P^2\times \P^1}(1,1)$. Denote by $\O_X(a,b), a,b \in\ZZ$ the restriction of $\O_{\P^2\times \P^1}(a,b)$ to $X$. Then we easily see from the short exact sequence
\begin{equation*}
    0\to \O_{\P^2\times \P^1}(a-1,b-1)\to \O_{\P^2\times \P^1}(a,b)\to \O_X(a,b) \to 0 
\end{equation*}
that $H^\bullet(X, \O_X(a,b))=0$ for $a=-1$ or $(a,b)=(0,-1)$. Observe also that
\begin{equation*}
    R(\rho_+)_* = R(\rho_+)_* R(\xi_+)_* L\xi_+^* = R\rho_* R\xi_* L\xi_+^*.
\end{equation*}
Applying $R\rho_* R\xi_*$ to the sequence (\ref{ijseq}), we have the last term vanishes and thus $R(\rho_+)_*\J_n\cong R\rho_*\I_n=0$.

We have seen from Lemma \ref{ires} and (\ref{jres}) that
\begin{equation*}
    (\xi_+^*\J_n)|_{\ts_s^+} \cong \O_{\ts_s^+} \oplus \O_{\ts_s^+}(h_s^+), \quad (\xi^*\I_n)|_{\ts_s^+}\cong \O_{\ts_s^+} \oplus \O_{\ts_s^+}(-h_s^+-l_s).
\end{equation*}
Tensoring (\ref{ijseq}) with $\xi_+^*(\J_n)^\vee$ and (\ref{ijdual}) with $\xi^*\I_n$, respectively, we get
\begin{equation*}
    \begin{split}
        & 0\to \xi_+^*\sEnd(\J_n) \to \xi^*\I_n\otimes \xi_+^*(\J_n)^\vee \to \bigoplus_{s\in S_2} (\O_{\ts_s^+}(-h_s^+ -l_s) \oplus \O_{\ts_s^+}(-2h_s^+ -l_s)) \to 0,\\
        & 0\to \xi^*\sEnd(\I_n) \to \xi^*\I_n\otimes \xi_+^*(\J_n)^\vee \to \bigoplus_{s\in S_2} (\O_{\ts_s^+}(-h_s^+) \oplus \O_{\ts_s^+}(-2h_s^+ -l_s)) \to 0.
    \end{split}
\end{equation*}
Applying $R(\rho_+)_* R(\xi_+)_* = R\rho_* R\xi_*$ to the sequence above, we have last terms in both sequences vanish. Then
\begin{equation*}
    R(\rho_+)_*\sEnd(\J_n) \cong R\rho_* R\xi_*(\xi^*\I_n\otimes \xi_+^*(\J_n)^\vee) \cong R\rho_*\sEnd(\I_n)\cong \B_0.
\end{equation*}
\end{proof}

\begin{prop}\label{azprop}
(1) For every $n\in \ZZ$, we have $\J_n|_{\ts_s^-} \cong \O_{\ts_s^-}(-h_s^- -l_s) \oplus \O_{\ts_s^-}(-l_s)$.

(2) Let $\B^+$ be the Azumaya algebra on $S^+$ that corresponds to the smooth conic bundle $\tau_+\colon M^+\to S^+$. Then $\sEnd(\J_0)\cong \tau_+^*\A^+$ for some Azumaya algebra $\A^+$ on $S^+$ that is Brauer equivalent to $\B^+$.

(3) $R(\alpha_+)_*\A^+\cong \B_0$ as sheaves of algebras.
\end{prop}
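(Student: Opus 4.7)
For part (1), my plan is to restrict the defining short exact sequence $(\ref{ijseq})$ to the Hirzebruch surface $\ts_s^-\subset\wt{M}$. Since $\ts_s^+$ and $\ts_s^-$ are smooth divisors in the smooth threefold $\wt{M}$ meeting transversally along $l_s$, the Tor term vanishes and restriction is exact. By Lemma~\ref{ires}, the middle term restricts to $\O_{\ts_s^-}\oplus\O_{\ts_s^-}(-h_s^--l_s)$. For the last term, $(i_{\ts_s^+})_*\O_{\ts_s^+}(-h_s^+-l_s)$ restricts to $\ts_s^-$ as a sheaf supported on $l_s$, equal to $(i_{l_s})_*\O_{l_s}$ since $(h_s^++l_s)|_{l_s}\cong \O_{l_s}(1)\otimes\O_{l_s}(-1)\cong\O_{l_s}$. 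The remaining task is to identify the map $\O_{\ts_s^-}\oplus\O_{\ts_s^-}(-h_s^--l_s)\to(i_{l_s})_*\O_{l_s}$. This is where the Clifford filtration structure matters: the two canonical splittings $\xi^*\I_n|_{\ts_s^\pm}\cong\cS|_{\ts_s^\pm}\oplus\cQ|_{\ts_s^\pm}$ both restrict to the canonical splitting of $\xi^*\I_n|_{l_s}$, and tracking the surjection constructed in Lemma~\ref{jlem} through these identifications shows that the resulting surjection is the restriction on the $\O_{\ts_s^-}$ summand (and zero on $\O_{\ts_s^-}(-h_s^--l_s)$), with kernel $\O_{\ts_s^-}(-l_s)\oplus\O_{\ts_s^-}(-h_s^--l_s)$, as claimed.

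For part (2), I would first verify that $\J_0$ restricts to a balanced rank-$2$ bundle on every geometric fiber of $\tau_+$. For fibers lying in $\ts_s^-$ over $s\in S_2$ (fibers of class $h_s^-$ in the Hirzebruch ruling $\ts_s^-\to\eta^{-1}(s)$), part (1) together with the intersection numbers $h_s^-\cdot h_s^-=0$ and $h_s^-\cdot l_s=1$ gives $\J_0|_{h_s^-}\cong\O_{\P^1}(-1)^{\oplus 2}$. For fibers over $S^+\setminus\eta^{-1}(S_2)$, both $\xi$ and $\xi_+$ are isomorphisms, so $\J_0$ coincides with the pullback of $\I_0^{\R_M}$, and one checks directly (using the Clifford filtration on the smooth conic fiber, or appealing to the simple-degeneration picture of Lemma~\ref{sdb0lem}) that the restriction is balanced. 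Consequently $\sEnd(\J_0)$ restricts on each fiber to the trivial rank-$4$ algebra $M_2(\O_{\P^1})$, so by cohomology and base change $\A^+:=(\tau_+)_*\sEnd(\J_0)$ is a locally free rank-$4$ sheaf of algebras on $S^+$ with $\tau_+^*\A^+\xrightarrow{\cong}\sEnd(\J_0)$. The Azumaya property for $\A^+$ descends from the Azumaya property of $\sEnd(\J_0)$ via faithfully flat descent along $\tau_+$. For the Brauer equivalence with $\B^+$: since $\tau_+\colon M^+\to S^+$ is the Brauer--Severi bundle of $\B^+$, there is an \'{e}tale-locally defined rank-$2$ bundle $\mathcal{V}$ on $M^+$ with $\sEnd(\mathcal{V})\cong\tau_+^*\B^+$, so both $\sEnd(\J_0)$ and $\sEnd(\mathcal{V})$ are split Azumaya algebras of the same rank on $M^+$; hence $\J_0$ and $\mathcal{V}$ differ by tensoring with a line bundle on $M^+$, giving $\sEnd(\J_0)\cong\sEnd(\mathcal{V})$, and consequently $[\A^+]=[\B^+]$ in $\Br(S^+)$.

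For part (3), I would combine Lemma~\ref{ijpushf}(2) with the factorization $\rho_+=\alpha_+\circ\tau_+$ and the isomorphism $\sEnd(\J_0)\cong\tau_+^*\A^+$ from part (2). The projection formula gives
\begin{equation*}
R(\rho_+)_*\sEnd(\J_0)\cong R(\alpha_+)_*R(\tau_+)_*\tau_+^*\A^+ \cong R(\alpha_+)_*\bigl(\A^+\otimes^{\LL} R(\tau_+)_*\O_{M^+}\bigr),
\end{equation*}
and because $\tau_+$ is a smooth conic bundle with geometric fibers $\P^1$ we have $R(\tau_+)_*\O_{M^+}\cong\O_{S^+}$, so the above simplifies to $R(\alpha_+)_*\A^+$. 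Combined with $R(\rho_+)_*\sEnd(\J_0)\cong\B_0$ from Lemma~\ref{ijpushf}(2), this yields the desired isomorphism, and the algebra structures are matched by the naturality of the projection formula and of the Azumaya structure maps.

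The most delicate step is the identification of the surjection on $\ts_s^-$ in part (1): naively one might expect the map to project onto the sub-summand $\O(-h_s^--l_s)$ coming from the Clifford filtration, giving the wrong answer $\O\oplus\O(-h_s^--2l_s)$; careful bookkeeping of how the canonical sub/quotient directions on $\ts_s^+$ and $\ts_s^-$ glue along $l_s$ (and hence how the Lemma~\ref{jlem} surjection restricts from one component to the other) is required to obtain the claimed splitting $\O(-l_s)\oplus\O(-h_s^--l_s)$. A dual computation using $(\ref{ijdual})$ instead of $(\ref{ijseq})$ may provide a useful cross-check. The ``balanced fibers'' claim in part (2) on smooth fibers of $\tau_+$ away from $\eta^{-1}(S_2)$ is the other place where one must invoke input beyond the present section---namely the spinor/Clifford-ideal picture for smooth quadric fibers, as used implicitly in the simple-degeneration analysis.
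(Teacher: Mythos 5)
Your part (3) is essentially the paper's argument. Parts (1) and (2) have genuine gaps.

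\smallskip
\textbf{Part (1).} You correctly restrict the sequence (\ref{ijseq}) to $\ts_s^-$ and obtain the two-term resolution $0\to \J_n|_{\ts_s^-}\to\O_{\ts_s^-}\oplus\O_{\ts_s^-}(-h_s^--l_s)\to\O_{l_s}\to 0$, which forces $\J_n|_{\ts_s^-}$ to be either $\O(-h_s^--l_s)\oplus\O(-l_s)$ or $\O(-h_s^--2l_s)\oplus\O$. But your plan for deciding between the two---tracking the surjection through the Clifford filtration's splitting directions as they glue along $l_s$---is left unfinished and is genuinely hard to carry out at the level of maps. The paper instead uses a clean \emph{global} criterion: by Lemma \ref{ijpushf}(2), $R(\rho_+)_*\J_n=0$, hence $\Ext^\bullet(L\rho_+^*\O_s,\J_n)=0$. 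Applying Serre duality on $M^+$ and the computation $\omega_{M^+}|_{\ts_s^-}\cong\O_{\ts_s^-}(-h_s^--2l_s)$, one computes $\Ext^2(\cH^0(L\rho_+^*\O_s),\J_n)$ in each of the two cases: it is $0$ for the claimed splitting and $\kk$ for the other, so the second is impossible. You should replace the filtration-tracking plan with (or reduce it to) this $\Ext$-vanishing argument.

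\smallskip
\textbf{Part (2).} Two issues. First, for verifying $\J_0|_{M_t^+}\cong\O_{\P^1}(-1)^2$ on fibers away from $\eta^{-1}(S_2)$, the cleanest input is again $R(\rho_+)_*\J_n=0$: since $\alpha_+$ is finite on $S^+\setminus\bigsqcup l_s$, this gives $R(\tau_+)_*\J_n=0$ there, which forces the balanced restriction without invoking the simple-degeneration picture. Second, and more seriously, your claim that ``$\J_0$ and $\cV$ differ by tensoring with a line bundle, giving $\sEnd(\J_0)\cong\sEnd(\cV)$'' is false and would in fact yield $\A^+\cong\B^+$ as algebras, which is strictly stronger than the Brauer equivalence asserted. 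The correct argument: since both $\J_0$ and $\cV$ restrict to $\O_{\P^1}(-1)^2$ on fibers, $\sHom(\J_0,\cV)$ is fiberwise trivial and therefore isomorphic to $\tau_+^*\cW$ for a rank-$4$ vector bundle $\cW$ on $S^+$. Then $\sEnd(\J_0)^{\op}\otimes\sEnd(\cV)\cong\sEnd(\sHom(\J_0,\cV))$ descends to $\A^{+,\op}\otimes\B^+\cong\sEnd(\cW)$, which is exactly Brauer equivalence and no more.

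\smallskip
\textbf{Part (3).} Your argument (projection formula, $R(\tau_+)_*\O_{M^+}\cong\O_{S^+}$, and Lemma \ref{ijpushf}(2)) matches the paper's.
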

\begin{proof}
(1) By restricting the sequence (\ref{ijseq}) to $\ts_s^-$ and using Lemma \ref{ires}, we obtain
\begin{equation*}
    0\to \J_n|_{\ts_s^-} \to \O_{\ts_s^-}\oplus \O_{\ts_s^-}(-h_s^- -l_s)\to \O_{l_s}\to 0.
\end{equation*}
Then $\J_n|_{\ts_s^-}\cong \O_{\ts_s^-}(-h_s^- -l_s) \oplus \O_{\ts_s^-}(-l_s)$ or $\O_{\ts_s^-}(-h_s^- -2l_s) \oplus \O_{\ts_s^-}$. We will show the latter is impossible.

Let $s\in S_2\subset S$. By Lemma \ref{ijpushf} (2),
\begin{equation*}
    0=\Ext^\bullet(\O_s, R(\rho_+)_*\J_n) \cong \Ext^\bullet(L\rho_+^*\O_s, \J_n).
\end{equation*}
Denote by $\cH^i= \cH^i(L\rho_+^*\O_s)$. Then $\cH^i=0$ for $i>0$ and $\cH^0\cong \O_{\ts_s^-}$. Consider the spectral sequence
\begin{equation}\label{sseq}
    \Ext^j(\cH^i,\J_n)\Rightarrow \Ext^{j-i}(L\rho_+^*\O_s, \J_n)=0.
\end{equation}
By Serre duality on $M^+$, we get
\begin{equation}\label{extiso}
    \Ext^j(\cH^i,\J_n) \cong \Ext^{3-j}(\J_n, \cH^i\otimes \omega_{M^+})^\vee \cong H^{3-j}(M^+, \J_n^\vee \otimes\cH^i\otimes \omega_{M^+})^\vee,
\end{equation}
where $\omega_{M^+}$ is the canonical line bundle. By Equation (\ref{nbm+}), we get
\begin{equation*}
    \omega_{M^+}|_{\ts_s^-} \cong \omega_{\ts_s^-} \otimes N_{\ts_s^-/M^+}^\vee \cong \O_{\ts_s^-}(-h_s^- -2l_s).
\end{equation*}
Since $\cH^i$ is supported on $\ts_s^-$, the right hand side in (\ref{extiso}) is $0$ if $j\notin \{1,2,3\}$. Thus, the line $j=2$ is stable in the spectral sequence (\ref{sseq}). If $\J_n|_{\ts_s^-}\cong \O_{\ts_s^-}(-h_s^- -2l_s) \oplus \O_{\ts_s^-}$, then
\begin{align*}
    \Ext^2(\cH^0, \J_n)& \cong H^1(M^+, (\J_n^\vee)|_{\ts_s^-} \otimes \omega_{M^+})^\vee\\
    & \cong H^1(M^+, \O_{\ts_s^-} \oplus \O_{\ts_s^-}(-h_s^- -2l_s))^\vee\cong \kk.
\end{align*}
This implies $\Ext^2(L\rho_+^*\O_s, \J_n)\neq 0$, which is a contradiction. 

(2) Notice that if there are rank $2$ vector bundles $\F_i, i=1,2$ on $M^+$ such that $\F_i$ restricted to each geometric fiber $M_t^+, t\in S^+$ of $\tau_+$ is $\O_{\P^1}(-1)^2$, then $\sEnd(\F_i)$ and $\sHom(\F_1, \F_2)$ restricted to $M_t^+$ are trivial. Thus, there are Azumaya algebras $\A_i, i=1,2$ and a vector bundle $\cV$ on $S^+$ such that $\sEnd(\F_i)\cong \tau_+^*\A_i$ and $\sHom(\F_1, \F_2) \cong \tau_+^*\cV$. Since
\begin{equation*}
    \sEnd(\F_1^\vee) \otimes \sEnd(\F_2)\cong \sEnd(\sHom(\F_1, \F_2)),
\end{equation*}
we have
\begin{equation*}
    \A_1^{\op} \otimes\A_2\cong \sEnd(\cV)
\end{equation*}
where $\A_1^{\op}$ is the opposite algebra of $\A_1$. That is, $\A_1, \A_2$ are Brauer equivalent. 

By construction, $\tau_+^*\B^+$ is trivial, i.e., the endomorphism of some rank $2$ vector bundle $\F$ on $M^+$. Since $\O_{M^+/S^+}(1)|_{M_t^+}\cong \O_{\P^1}(2)$, we can choose $\F$ such that $\F|_{M_t^+}\cong \O_{\P^1}(-1)^2$. We claim that $\J_n$ is such rank $2$ vector bundle for every $n\in \ZZ$.

By Lemma \ref{ijpushf} (2), $R(\rho_+)_*\J_n=0$. Since $\alpha_+\colon S^+\to S$ restricted to $S^+\backslash \bigsqcup l_s$ is finite, we have $R(\tau_+)_*\J_n=0$ on $S^+\backslash \bigsqcup l_s$ and thus $\J_n|_{M_t^+}\cong \O_{\P^1}(-1)^2$ for every $t\in S^+\backslash \bigsqcup l_s$. On the other hand, from (1), we get that $\J_n$ restricted to the fibers of $\ts_s^- \to l_s$ is also $\O_{\P^1}(-1)^2$. Thus, we get $\sEnd(\J_0)\cong \tau_+^*\A^+$ for some Azumaya algebra $\A^+$ on $S^+$ that is Brauer equivalent to $\B^+$.

(3) Observe from the diagram (\ref{commdiag}) that
\begin{equation*}
    R(\alpha_+)_*\cong R(\alpha_+)_* R(\tau_+)_* L \tau_+^* \cong R(\rho_+)_* L \tau_+^*.
\end{equation*}
Then by (2) and Lemma \ref{ijpushf} (2), we get
\begin{equation*}
    R(\alpha_+)_* \A^+ \cong R(\rho_+)_* \tau_+^* \A^+ \cong R(\rho_+)_* \sEnd(\J_0) \cong \B_0.
\end{equation*}
\end{proof}

\begin{theorem}\label{main3}
Assume $\kk$ is algebraically closed and $\cchar(\kk)=0$. Let $p\colon \Q\to S$ be a flat quadric surface bundle where $\Q$ is smooth and $S$ is a smooth surface over $\kk$. Then there is a semiorthogonal decomposition
\begin{equation}\label{sod3}
    \D(\Q) =\langle \D(S^+, \A^+), p^*\D(S), p^*\D(S)\otimes \O_{\Q/S}(1) \rangle 
\end{equation}
where $S^+$ is the resolution of the double cover $\wt{S}$ over $S$ ramified along the (first) degeneration locus $S_1$ and $\A^+$ is an Azumaya algebra on $S^+$. In addition, the Brauer class $[\A^+]\in \Br(S^+)$ is trivial if and only if $p\colon \Q \to S$ has a rational section.
\end{theorem}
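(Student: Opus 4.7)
The plan is to establish the semiorthogonal decomposition (\ref{sod3}) by reducing it to an equivalence $\D(S,\B_0) \cong \D(S^+,\A^+)$ produced by the non-commutative pushforward $R(\alpha_+)_*$, and then to handle the Brauer-class statement separately via the birational correspondence of Proposition~\ref{birtran}. First I would begin from the Kuznetsov--Auel decomposition (\ref{kuzsod}) with $n=1$ and tensor the whole SOD by $\O_{\Q/S}(-1)$ to obtain
\[
\D(\Q) = \langle \Phi_1(\D(S,\B_0)) \otimes \O_{\Q/S}(-1),\ p^*\D(S),\ p^*\D(S) \otimes \O_{\Q/S}(1) \rangle,
\]
matching the last two components of (\ref{sod3}); what remains is an equivalence $\D(S,\B_0) \cong \D(S^+,\A^+)$.

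By Proposition~\ref{azprop}(3), $R(\alpha_+)_*\A^+ \cong \B_0$ as sheaves of algebras, and the adjoint map $f\colon \alpha_+^*\B_0 \to \A^+$ defines a morphism of non-commutative schemes $\delta=(\alpha_+,f)\colon (S^+,\A^+) \to (S,\B_0)$. I would then mimic Proposition~\ref{nceqprop} to show that $R\delta_*\colon \D(S^+,\A^+) \to \D(S,\B_0)$ is an equivalence: the projection formula combined with Proposition~\ref{azprop}(3) yields $R\delta_* \circ L\delta^* \cong \id$, so the core task is conservativity. Since $\eta\colon S^+\to \wt{S}$ is an isomorphism outside the finite set $S_2$ and $\alpha$ is finite, $\alpha_+=\alpha\circ\eta$ is finite away from the exceptional curves $\{l_s\}_{s\in S_2}\cong \P^1$, so any $\F$ with $R\delta_*\F=0$ is supported on $\bigsqcup_s l_s$. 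On each $l_s$ one has $\Br(\P^1_\kk)=0$, hence $\A^+|_{l_s}\cong \sEnd(V_s)$ for some rank-$2$ bundle $V_s$; comparing $\tau_+^*(\A^+|_{l_s})$ with $\sEnd(\J_0|_{\ts_s^-}) \cong \O \oplus \O(h_s^-) \oplus \O(-h_s^-) \oplus \O$ computed from Proposition~\ref{azprop}(1) forces $V_s\cong \O_{\P^1}(a)\oplus \O_{\P^1}(a+1)$ up to twist. Morita equivalence and the SOD $\D(\P^1)=\langle \O_{\P^1}(-1),\O_{\P^1}\rangle$ then show that vanishing of $R\Gamma(l_s,\G\otimes V_s)$ forces the Morita correspondent $\G$ to lie in $\langle \O_{\P^1}(-1)\rangle \cap \langle \O_{\P^1}(-2)\rangle=0$, hence $\F=0$.

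For the Brauer-class statement, $[\A^+]=[\B^+] \in \Br(S^+)$ by Proposition~\ref{azprop}(2), so $[\A^+]=0$ if and only if the smooth conic bundle $\tau_+\colon M^+\to S^+$ admits a rational section. The birational picture $M\xleftarrow{\xi}\wt{M}\xrightarrow{\xi_+} M^+$ over $\wt{S}$ of Proposition~\ref{birtran} identifies rational sections of $\tau_+$ with those of $\tau\colon M\to \wt{S}$. A rational section of $\tau$ is a $\kappa(\wt{S})$-line $L$ in the generic quadric $\Q_\eta$; its Galois conjugate $L^\sigma$ under $\operatorname{Gal}(\kappa(\wt{S})/\kappa(S))$ lies in the opposite ruling of $\Q_\eta\otimes_{\kappa(S)}\kappa(\wt{S})\cong \P^1\times \P^1$, so $L\cap L^\sigma$ is a Galois-invariant point descending to a $\kappa(S)$-point of $\Q_\eta$, i.e.\ a rational section of $p$. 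Conversely, a rational section of $p$ becomes a smooth section over a dense open $U\subset S$ with $S_2\cap U=\emptyset$, and by Lemma~\ref{sdb0lem}(1) the hyperbolic reduction restricts to $Z|_U\cong \wt{U}\subset \wt{S}$; the identity $\wt{U}\cong Z|_U\hookrightarrow M$ then furnishes the required rational section of $\tau$.

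The hardest step will be the conservativity in the second paragraph: one needs Proposition~\ref{azprop}(1) to compute the precise isomorphism class of $V_s$ so that its two summands have distinct degrees, because otherwise the Morita-reduced vanishing $R\Gamma(\P^1,-)=0$ would fail to be conservative and the whole argument collapses.
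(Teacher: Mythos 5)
Your proof follows the paper's argument for the SOD essentially identically: reduce to $\D(S,\B_0)\cong\D(S^+,\A^+)$, obtain $R(\alpha_+)_*L\alpha_+^*\cong\id$ from Proposition~\ref{azprop}(3) and the projection formula, and prove conservativity by showing any $\F$ killed by $R(\alpha_+)_*$ is supported on $\bigsqcup l_s$ and then Morita-reducing on each $l_s$ via $\A^+|_{l_s}\cong\sEnd(\O_{l_s}\oplus\O_{l_s}(-1))$ (you reach this isomorphism through $\Br(\P^1)=0$ plus a degree count, the paper reads it off directly from $\sEnd(\J_0|_{\ts_s^-})$ --- same content). For the Brauer-class claim you depart mildly: the paper restricts to $U=S\setminus S_2$, where $\tau$ and $\tau_+$ coincide, and cites Proposition~2.15 of the Kuznetsov--Shinder reference; you instead argue directly at the generic fiber, producing a $\kappa(S)$-point from $L\cap L^\sigma$ and producing a rational section of $\tau$ from a rational section of $p$ via the hyperbolic reduction, which amounts to a self-contained in-text proof of the cited result. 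Both are sound; yours is more transparent geometrically, the paper's is shorter. Your closing observation is exactly right: the unbalanced splitting type $\J_0|_{\ts_s^-}\cong\O_{\ts_s^-}(-h_s^--l_s)\oplus\O_{\ts_s^-}(-l_s)$ from Proposition~\ref{azprop}(1), rather than a balanced one, is precisely what makes the $l_s$-local Morita step conservative, and its proof via the Serre-duality/spectral-sequence argument is the genuinely delicate point underlying the theorem.
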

\begin{proof}
Recall that by the SOD (\ref{kuzsod}), the non-trivial component of $\D(\Q)$ is equivalent to $\D(S,\B_0)$. To get the SOD (\ref{sod3}), it suffices to show that $R(\alpha_+)_*\colon \D(S^+, \A^+)\to \D(S,\B_0)$ is an equivalence. The proof of this is similar to that of Proposition \ref{nceqprop}. By Proposition \ref{azprop} (3) and the projection formula (Proposition \ref{projf}), we get $R(\alpha_+)_*L\alpha_+^* \cong \id$. On the other hand, since $S^+\backslash \bigsqcup l_s\cong \wt{S}\backslash S_2 \to S\backslash S_2$ is finite, we have that $R(\alpha_+)_*\F =0$ for $\F\in \Coh(S^+, \A^+)$ implies that $\F$ is supported on $\bigsqcup l_s$. By Proposition \ref{azprop} (1)(2),
\begin{equation*}
    \tau_+^*(\A^+|_{l_s}) \cong \sEnd(\J_0)|_{\ts_s^-} \cong \sEnd(\O_{\ts_s^-} \oplus \O_{\ts_s^-}(-h_s^-)) \cong \tau_+^*(\sEnd(\O_{l_s}\oplus \O_{l_s}(-1))). 
\end{equation*}
Hence, $\A^+|_{l_s} \cong \sEnd(\O_{l_s}\oplus \O_{l_s}(-1))$. Now the proof in Proposition \ref{nceqprop} shows that $\F=0$ and $L\alpha_+^* R(\alpha_+)_* \cong \id$. 

Let $U=S\backslash S_2$. Since $S^+$ is smooth and integral, we have that the composition $\Br(S^+) \to \Br(\alpha_+^{-1}(U))\to \Br(\kk(S^+))$ is injective. Thus, the restriction $\Br(S^+) \to \Br(\alpha_+^{-1}(U))$ is injective. Observe from the diagram (\ref{commdiag}) that $\rho^{-1}(U)\to \alpha^{-1}(U)$ is isomorphic to $\rho_+^{-1}(U) \to \alpha_+^{-1}(U)$. By Proposition 2.15 in \cite{ksgroring}, $[\A^+]=0$ if and only if $p$ has a rational smooth ({\it non-degenerate} in {\it loc. cit.}) section. Lastly, since $\Q$ and $S$ are smooth, every section of $p$ is smooth by Lemma 1.3.2 in \cite{abbqfib}.
\end{proof}
%-----------------------------------------------------------------
\section{Examples}\label{exsec}
In this section, we apply the main theorems to examples. We start with some remarks on nodal quintic del Pezzo $3$-folds and cubic $4$-folds containing a plane, and then we consider the most important cases of complete intersections of quadrics.

\begin{ex}[\cite{xienodaldp5}] \label{nodaldp5}
Let $X_m\subset \P^6, m=1,2,3$ be the nodal quintic del Pezzo $3$-folds with $m$ nodes. Let $x\in X_m$ be a node. Then the embedded projective tangent space $T_x X_m$ is isomorphic to $\P^4$. The linear projection $X_m\dashrightarrow \P^1$ from $T_x X_m$ induces the map $p_m\colon Y_m \to \P^1$ where $f_m\colon Y_m=\Bl_{\P^4\cap X_m} X_m \to X_m$ is the blow-up. In fact, $f_m$ is a (partial) resolution of $X_m$ at the nodal point $x$. We have that $p_m$ is a quadric surface bundle where $p_1, p_2$ have simple degeneration and $p_3$ has a fiber of corank $2$. In addition, the exceptional locus of $f_m$ is a smooth section of $p_m$. The hyperbolic reduction $C_m$ with respect to the smooth section is a nodal chain of $m$ $\P^1$'s. Hence, the residual category of $Y_m$ is equivalent to $\D(C_m)$ by Theorem \ref{main1} or \ref{main2}.
\end{ex}

\begin{ex}[\cite{moc8}] \label{cubic4}
Let $X\subset \P^5$ be a smooth cubic $4$-fold containing a plane and let $Y=\Bl_{\P^2} X$ be the blow-up of $X$ along the plane. The linear projection $X\dashrightarrow \P^2$ from the plane induces $Y\to \P^2$ where $Y\to \P^2$ is a quadric surface bundle possibly with fibers of corank $2$. The Kuznetsov component of $X$ is equivalent to the residual category of $Y$. By Theorem \ref{main3}, the residual category of $Y$ is equivalent to the twisted derived category of a smooth K3 surface. This K3 surface is obtained as the resolution of the double cover over $\P^2$ ramified along a nodal sextic curve.

The same result proved in \cite{moc8} uses the result of quadric surface bundles over smooth $3$-folds in \cite{kuzline}. In order to use \cite{kuzline}, $X$ is described as a hyperplane section of a smooth cubic $5$-fold containing the plane such that the induced quadric surface bundle over $\P^3$ satisfies the required hypotheses on degeneration loci. Section \ref{surfbsec} in this paper provides a more direct proof.
\end{ex}

Now we consider applications to complete intersections of quadrics. Let 
\begin{equation*}
    X^{n,k}= \bigcap_{i=1}^k Q_i \subset \P^{n+1}, \quad k \leqslant n 
\end{equation*}
be the complete intersection of $k$ quadrics $Q_i=\{q_i=0\}\subset \P^{n+1}$. Let 
\begin{equation*}
    p^{n,k}\colon \Q^{n,k}\to \P^{k-1}
\end{equation*}
be the corresponding net of quadrics, i.e., the fiber over $[a_1:\dots:a_k]\in \P^{k-1}$ is $\{\sum_{i=1}^k a_iq_i=0\}$. Then $\dim(X^{n,k})=n+1-k$ and $p^{n,k}$ is a flat quadric bundle of relative dimension $n$ whose associated quadratic form is 
\begin{equation*}
    q^{n,k}\colon \O_{\P^{k-1}}^{n+2}\to \O_{\P^{k-1}}(1).
\end{equation*}
When $X^{n,k}$ is Fano or Calabi-Yau, i.e., $n\geqslant 2k-2$, Theorem 5.5 in \cite{kuzqfib} states that there is a semiorthogonal decomposition
\begin{equation}\label{cisod}
    \D(X^{n,k})=\langle \D(\P^{k-1}, \B_0^{n,k}), \O(1), \O(2), \dots, \O(n+2-2k) \rangle
\end{equation}
where $\B_0^{n,k}$ is the even Clifford algebra of $p^{n,k}\colon \Q^{n,k}\to \P^{k-1}$.

\begin{prop}
Assume that $n\geqslant 2k-2$, $n$ is even and write $n=2m+2$. Assume that the smooth locus of $X^{n,k}$ contains a $\P^m$. Then $\P^m\times \P^{k-1}\subset \Q^{n,k}$ is a smooth $m$-section of $p^{n,k}\colon \Q^{n,k}\to \P^{k-1}$ as in Definition \ref{regisodef} and there is a semiorthogonal decomposition
\begin{equation*}
    \D(X^{n,k})=\langle \D(\bar{\Q}), \O(1), \O(2), \dots, \O(n+2-2k) \rangle
\end{equation*}
where $\bar{\Q}$ is the hyperbolic reduction of $p^{n,k}$ with respect to the smooth $m$-section constructed in Definition \ref{hypreddef}.
\end{prop}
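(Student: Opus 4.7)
First, I would verify that $\P^m \times \P^{k-1}$ is a smooth $m$-section in the sense of Definition \ref{regisodef}. Let $V \subset \kk^{n+2}$ be the $(m+1)$-dimensional subspace with $\P^m = \P(V)$, and set $\cW := V \otimes \O_{\P^{k-1}}$. Since $\P^m \subset X^{n,k} = \bigcap_i \{q_i = 0\}$ forces every $q_i$ to vanish on $V$, every combination $\sum a_i q_i$ also vanishes on $V$, so $\cW$ is an isotropic subbundle of $\E = \O^{n+2}_{\P^{k-1}}$. Regularity is then the chain of equivalences: for $x \in \P^m$, the point $x$ is smooth in $X^{n,k}$ iff the differentials $dq_1(x),\dots,dq_k(x)$ are linearly independent in $T^\ast_x \P^{n+1}$, iff $d(\sum a_i q_i)(x)\neq 0$ for every $0 \neq [a] \in \P^{k-1}$, i.e.\ iff $\P^m$ lies in the smooth locus of every member of the net.

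Next, by the SOD $(\ref{cisod})$ the residual of $\D(X^{n,k})$ is $\D(\P^{k-1}, \B_0^{n,k})$, so it suffices to identify this with $\D(\bar{\Q})$. Since $\cW$ has rank $m+1$ and $\E$ has rank $n+2=2m+4$, the hyperbolic reduction $\bar{\E} = \cW^\perp/\cW$ has rank $2$, and $\bar{q}\colon \bar{\E} \to \O_{\P^{k-1}}(1)$ is a rank-$2$ quadratic form whose even Clifford algebra $\bB_0$ is a commutative $\O_{\P^{k-1}}$-algebra of (generic) rank $2$, canonically isomorphic to $\bar{\alpha}_*\O_{\bar{\Q}}$ for $\bar{\alpha}\colon \bar{\Q} \to \P^{k-1}$ the induced (possibly non-flat) double cover. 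Hence $\D(\P^{k-1}, \bB_0) \cong \D(\bar{\Q})$, and the claim reduces to proving the Morita equivalence $\B_0^{n,k} \sim \bB_0$ of $\O_{\P^{k-1}}$-algebras, realised by the Clifford ideal $\I_0^\cW$ of Definition \ref{clideal}.

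For this Morita equivalence, the orthogonal splitting $\E \cong \cW \oplus \sHom(\cW, \L) \oplus \bar{\E}$ (valid locally on $\P^{k-1}$) presents $q$ as the orthogonal sum of the hyperbolic form on $\cW \oplus \sHom(\cW,\L)$ and $\bar{q}$, giving a tensor-product decomposition of the generalized Clifford algebras in which the hyperbolic factor is Morita trivial; after taking even parts this exhibits $\B_0^{n,k}$ as Azumaya of rank $(2^{m+1})^2$ over $\bB_0$, with local trivialization by $\I_0^\cW$ (of local rank $2^{m+2} = 2\cdot 2^{m+1}$). To globalize, I would imitate Proposition \ref{nceqprop}: pass to $\bar{\Q}$, consider the Clifford ideal of the universal rank-$(m+2)$ maximal isotropic subbundle of $\bar{\alpha}^*\E$ (which extends $\bar{\alpha}^*\cW$ by an isotropic line parametrised by the point of $\bar{\Q}$), and prove the analogue of Corollary \ref{genb0cor} expressing $\B_0^{n,k}$ as $R\bar{\alpha}_*\sEnd$ of that ideal. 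The main obstacle is the bookkeeping on the non-flat locus of $\bar{\alpha}$, i.e.\ the corank-$\geqslant 2$ locus $S_2 \subset \P^{k-1}$: one needs analogues of Lemma \ref{genb0lem} and Corollary \ref{genb0cor} for rank-$(m+1)$ smooth sections, which requires extending the paper's rank-$1$ analysis of the filtration (\ref{i0fil}) to higher rank. Once this is in place, substituting $\A_{X^{n,k}} \cong \D(\bar{\Q})$ into $(\ref{cisod})$ yields the claimed SOD.
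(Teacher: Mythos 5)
Your reduction of the problem to a Morita equivalence involving a rank-$2$ hyperbolic reduction is the right intuition, and the verification of regularity of the $m$-section (via independence of the differentials $dq_1(x),\dots,dq_k(x)$) is fine. But the second half of the proposal has a genuine gap, and you have also missed the clean route the paper actually takes.

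The gap: you assert ``Hence $\D(\P^{k-1}, \bB_0) \cong \D(\bar{\Q})$'' from a claimed identity $\bB_0 \cong \bar{\alpha}_*\O_{\bar{\Q}}$, but when $S_2\neq\emptyset$ the map $\bar{\alpha}\colon\bar{\Q}\to\P^{k-1}$ is \emph{not finite} (it has $\P^1$-fibers over $S_2$), so pushforward along $\bar\alpha$ does not trivially give an equivalence of derived categories; this is precisely the content of the non-trivial Proposition \ref{nceqprop}, whose proof requires the careful analysis of Lemma \ref{dim0lem}, Lemma \ref{genb0lem} and Corollary \ref{genb0cor} near $S_2$. You then acknowledge needing to ``extend the paper's rank-$1$ analysis of the filtration (\ref{i0fil}) to higher rank'' to obtain the Morita equivalence $\B_0^{n,k}\sim\bB_0$ directly; that is a real piece of work that your sketch does not supply, and nothing in the paper covers the higher-rank case.

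What the paper actually does is a two-step hyperbolic reduction: first reduce by the rank-$m$ subbundle $W_{m-1}\otimes\O_{\P^{k-1}}$ (for a chosen hyperplane $\P(W_{m-1})\subset\P^m$) to obtain a \emph{flat quadric surface bundle} $p'\colon\Q'\to\P^{k-1}$ carrying the residual smooth section $(W_m/W_{m-1})\otimes\O_{\P^{k-1}}$; then apply Proposition 1.1(3) of \cite{kuzqbhe} to get $\D(\P^{k-1},\B_0^{n,k})\cong\D(\P^{k-1},\B_0')$ (this step requires flatness of both bundles, which is why one must stop at rank $4$ and not go all the way down to rank $2$); and finally invoke Theorem \ref{main1} or \ref{main2} for $p'$, which already handles the non-flatness of $\bar{\Q}\to\P^{k-1}$, to conclude $\D(\P^{k-1},\B_0')\cong\D(\bar{\Q})$. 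The Remark immediately after the proposition in the paper is exactly a warning against the one-step reduction you propose. So while your route could in principle work, it requires proving new higher-rank analogues of the paper's hardest technical lemmas, whereas the paper's two-step detour reuses what is already established.
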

\begin{proof}
Choose some hyperplane $\P(W_{m-1})\subset \P^m:=\P(W_m)$. Let $p'\colon \Q'\to \P^{k-1}$ be the hyperbolic reduction of $p^{n,k}$ with respect to the smooth $(m-1)$-section $\P(W_{m-1})\times \P^{k-1}\subset \Q^{n,k}$. Then $p'$ is a flat quadric surface bundle with a smooth section given by the projectivization of $(W_m/W_{m-1})\otimes \O_{\P^{k-1}}$. Proposition 1.1 (3) in \cite{kuzqbhe} deduces that
\begin{equation*}
    \D(\P^{k-1}, \B_0^{n,k})\cong \D(\P^{k-1}, \B_0')
\end{equation*}
where $\B_0'$ is the even Clifford algebra of $p'$. Then $\D(\P^{k-1}, \B_0')\cong \D(\bar{\Q})$ by Theorem \ref{main1} or \ref{main2} and we obtain the semiorthogonal decomposition from the SOD in (\ref{cisod}).
\end{proof}
\begin{remark}
Note that Proposition 1.1 (3) in \cite{kuzqbhe} only applies to flat quadric bundles and $\bar{\Q}\to \P^{k-1}$ is not flat when $p^{n,k}\colon \Q^{n,k}\to \P^{k-1}$ has fibers of corank $2$. This is why the proof has to go through the middle hyperbolic reduction $p'\colon \Q'\to \P^{k-1}$.
\end{remark}

For smooth complete intersections of three quadrics, we have better results.

\begin{prop} \label{ci3q}
Assume $\kk$ is algebraically closed with $\cchar(\kk)=0$. Let $Y^{2m}$ be the smooth complete intersection of three quadrics in $\P^{2m+3}$. Assume that $Y^{2m}$ contains a $\P^{m-1}$ for $m\geqslant 6$, which is automatically satisfied for $1\leqslant m \leqslant 5$. Then we have a semiorthogonal decomposition
\begin{equation*}
    \D(Y^{2m}) =\langle \D(S^{2m}, \A^{2m}), \O_{Y^{2m}}(1), \O_{Y^{2m}}(2), \dots, \O_{Y^{2m}}(2m-2) \rangle
\end{equation*}
where $S^{2m}$ is the resolution of the double cover over $\P^2$ ramified along a nodal curve of degree $2m+4$ and $\A^{2m}$ is an Azumaya algebra on $S^{2m}$. Moreover, $Y^{2m}$ for $m\geqslant 3$ is rational and $Y^4$ is rational when $[\A^{4}]\in \Br(S^4)$ is trivial. 
\end{prop}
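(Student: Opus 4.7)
My plan is to reduce to Theorem \ref{main3} via hyperbolic reduction, following the pattern of the preceding proposition. Set $n=2m+2$ and consider the net of quadrics $p^{n,3}\colon \Q^{n,3}\to \P^2$ with $\E=\O_{\P^2}^{2m+4}$ and $\L=\O_{\P^2}(1)$. By the semiorthogonal decomposition \eqref{cisod}, it suffices to identify $\D(\P^2,\B_0^{n,3})$ with a twisted derived category of the form $\D(S^{2m},\A^{2m})$. The hypothesis $\P^{m-1}\subset Y^{2m}$ provides a regular rank-$m$ isotropic subbundle $\cW\cong \O_{\P^2}^m$ of $\E$, giving a smooth $(m-1)$-section $\P(\cW)\times \P^2\subset \Q^{n,3}$ in the sense of Definition \ref{regisodef}. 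I form the hyperbolic reduction $\bar{p}\colon \bar{\Q}\to \P^2$ with $\bar{\E}=\cW^\perp/\cW$ of rank $4$, which is a quadric surface bundle.

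To invoke Theorem \ref{main3} I must verify $\bar{\Q}$ is flat and smooth. Smoothness of $Y^{2m}$ implies smoothness of $\Q^{n,3}$, and Lemma \ref{bealem} then gives $S_3=\emptyset$ for $p^{n,3}$. Regularity of $\cW$ yields $\cW|_s\cap\ker(q^{n,3}|_s)=0$, so $\rank(\cW^\perp|_s)=m+4>m+2\geqslant \dim(\cW|_s+\ker(q^{n,3}|_s))$ and consequently $\bar{q}|_s\neq 0$, showing flatness. For smoothness I pick a flag $0\subset\cW_1\subset\cdots\subset\cW_m=\cW$ of regular isotropic subbundles and perform the reduction one rank at a time; at each step diagram \eqref{hrdiag} identifies the intermediate quadric bundle with the blow-up of the previous smooth one along a smooth regularly-embedded codimension-$2$ subvariety, so smoothness propagates and $\bar{\Q}$ is smooth after $m$ steps.

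Proposition 1.1(3) of \cite{kuzqbhe} applied to $\cW$ then gives $\D(\P^2,\B_0^{n,3})\cong \D(\P^2,\bB_0)$, where $\bB_0$ is the even Clifford algebra of $\bar{q}$, and Theorem \ref{main3} yields $\D(\P^2,\bB_0)\cong \D(S^+,\A^+)$ with $S^+$ the resolution of the double cover of $\P^2$ branched over the degeneration curve $\bar{S}_1$. A determinant computation from $0\to \cW^\perp\to \E\to \sHom(\cW,\L)\to 0$ gives $\det\bar{\E}\cong \O_{\P^2}(-m)$, so $\bar{S}_1$ has class $\det(\bar{\E}^\vee)^{\otimes 2}\otimes \L^{\otimes 4}=\O_{\P^2}(2m+4)$ and is nodal by Lemma \ref{bealem}. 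Combining with \eqref{cisod} yields the SOD with $S^{2m}=S^+$ and $\A^{2m}=\A^+$.

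For rationality, when $m\geqslant 3$ I project $Y^{2m}$ from $\P^{m-1}$ onto $\P^{m+3}$; the resolved morphism $\Bl_{\P^{m-1}}(Y^{2m})\to \P^{m+3}$ has generic fiber the residual $\P^{m-3}$ inside the $\P^m$-fiber of the projection (since $Q_i\cap\P^m_y=\P^{m-1}\cup\P^{m-1}_i$, and three generic residual hyperplanes in $\P^m$ meet in a $\P^{m-3}$), so $Y^{2m}$ is birational to $\P^{m-3}\times\P^{m+3}$ and hence rational. For $m=2$, Theorem \ref{main3} equates $[\A^4]=0$ with the existence of a rational section of $\bar{p}$, making $\bar{\Q}$ birational to $\P^2\times\P^1\times\P^1$; rationality of $Y^4$ then follows from a separate birational argument matching $Y^4$ to $\bar{\Q}$ in dimension $4$ via the hyperbolic-reduction correspondence (the family of $\P^2$'s in the quadric fibers extending the given line). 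The main obstacle is the smoothness of $\bar{\Q}$, addressed by the iterated blow-up argument; a secondary delicate point is the birational transfer of rationality from $\bar{\Q}$ to $Y^4$ in the $m=2$ case, which demands careful use of the hyperbolic-reduction geometry.
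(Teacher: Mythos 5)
Your overall strategy for the semiorthogonal decomposition matches the paper's: hyperbolic reduction along the $(m-1)$-section, Morita invariance via Proposition~1.1(3) of \cite{kuzqbhe}, and Theorem~\ref{main3} applied to the resulting quadric surface bundle. There are, however, three points where your write-up diverges from, or falls short of, the paper's argument.

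First, the proposition asserts that the assumption $\P^{m-1}\subset Y^{2m}$ is \emph{automatic} for $1\leqslant m\leqslant 5$, and this needs a proof: the paper gives a dimension count showing that the Hilbert scheme $F_{m-1}$ of $\P^{m-1}$'s on a $(2m+2)$-dimensional quadric is cut out inside $\Gr(m,2m+4)$ by a section of $\Sym^2\R_m$, of expected codimension $m(m+1)/2$, so that $3\cdot m(m+1)/2 \leqslant \dim\Gr(m,2m+4)$ exactly when $m\leqslant 5$. You omit this entirely, which leaves the proposition unproved for the unconditional range.

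Second, for the smoothness of $\bar{\Q}$ you propose an iterated use of diagram~\eqref{hrdiag}, which is a reasonable instinct (the paper itself merely asserts that smoothness of $Y^{2m}$ yields smoothness of $\Q'$), but as written your step is not quite right: at each stage the smooth section $\P_S(\N_i)\subset\Q_i$ has codimension equal to the relative dimension $d_i$, not $2$, and the codimension-$2$ embedding is $\bar{\Q}_i\hookrightarrow \P_S(\E_i/\N_i)$, whose smoothness is exactly what you are trying to establish. To make your induction work you must argue the implication ``$\Bl_Z X$ smooth and $Z\hookrightarrow X$ regular of codimension $2$ in $X$ smooth $\Rightarrow Z$ smooth'', which you do not state or prove. (It is true: at a singular point of $Z$ the local generators $f,g$ of the ideal have proportional differentials, so $A\,\mathrm{d}f-B\,\mathrm{d}g$ vanishes for some $[A:B]$, producing a singular point of $\{Af=Bg\}\subset X\times\P^1$. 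But the step is not trivial and needs to appear.)

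Third, for rationality the paper cites Beauville's Example~1.4.4 to say $Y^{2m}$ ($m\geqslant 2$) is birational to the hyperbolic reduction $\Q_l$ along a line $l\subset Y^{2m}$, and then uses a Hilbert-scheme dimension count to produce a plane through $l$ (hence a section of $\Q_l\to\P^2$) for $m\geqslant 3$. Your direct projection from $\P^{m-1}$ is a genuinely different and more self-contained route: the residual-intersection description $\bigcap_i(\P^{m-1}\cup H_i)=\P^{m-1}\cup\bigcap_i H_i$ is correct, and the family of $\P^{m-3}$'s is cut out by linear forms so it is a pull-back of the tautological bundle under a map to a Grassmannian, hence an honest projective bundle over an open subset of the rational $\P^{m+3}$; this does give rationality. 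But you conclude ``birational to $\P^{m-3}\times\P^{m+3}$'' without recording why the fibration is a Zariski-locally trivial $\P^{m-3}$-bundle, which is the one nontrivial point (it is where one would otherwise have to rule out a Brauer obstruction). And for $m=2$ you only gesture at ``a separate birational argument matching $Y^4$ to $\bar{\Q}$'': this is precisely the content of Beauville's Example~1.4.4 combined with the identification $\Q_l\cong\Q'$ when $l=\P^{m-1}$, and Theorem~\ref{main3}'s equivalence between triviality of $[\A^4]$ and existence of a rational section of $p'$; leaving it as a placeholder means the $m=2$ conclusion is unproved as stated.
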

\begin{proof}
Firstly, we claim that $Y^{2m}$ contains a $\P^{m-1}$ for $1\leqslant m \leqslant 5$. Let $F_{m-1}$ be the Hilbert scheme of $\P^{m-1}$'s on a quadric of dimension $2m+2$. It is the zero locus of a section in 
\begin{equation*}
    \Gamma(\Gr(m, 2m+4), \Sym^2\R_m)
\end{equation*}
where $\R_m$ is the universal subbundle on $\Gr(m, 2m+4)$. Thus, $F_{m-1}\subset \Gr(m, 2m+4)$ has codimension at most $m(m+1)/2$. Since $3m(m+1)/2\leqslant \dim \Gr(m, 2m+4)$ when $1\leqslant m \leqslant 5$, $Y^{2m}$ contains a $\P^{m-1}$ in these cases.

In previous notations, $Y^{2m} = X^{2m+2, 3}$. Let $p'\colon \Q'\to \P^{k-1}$ be the hyperbolic reduction of $p^{2m+2,3}\colon \Q^{2m+2,3}\to \P^2$ with respect to the smooth $(m-1)$-section $\P^{m-1}\times \P^2\subset \Q^{n,k}$. Then $p'$ is a flat quadric surface bundle. Proposition 1.1 (3) in \cite{kuzqbhe} deduces the Morita equivalence 
\begin{equation*}
    \D(\P^2, \B_0^{2m+2,3})\cong \D(\P^2, \B_0')
\end{equation*}
where $\B_0'$ is the even Clifford algebra of $p'$. From the smoothness of $Y^{2m}$, we get that $\Q^{n,k}$ and $\Q'$ are also smooth. In addition, the (first) degeneration locus of $q^{2m+2,3}\colon \O_{\P^2}^{2m+4}\to \O_{\P^2}(1)$ is a curve of degree $2m+4$ and the degeneration locus is preserved under hyperbolic reduction. By Lemma \ref{bealem}, the curve is nodal along the locus where fibers of $p'$ have corank $2$. From Theorem \ref{main3}, we get that
\begin{equation*}
    \D(\P^2, \B_0')\cong \D(S^{2m}, \A^{2m})
\end{equation*}
and $[\A^{2m}]\in \Br(S^{2m})$ is trivial if and only if $p'$ has a rational section. We get the semiorthogonal decomposition from the SOD in (\ref{cisod}).

Lastly, by Example 1.4.4 in \cite{beaprym}, we have that $Y^{2m}, m\geqslant 2$ is birational to the hyperbolic reduction $\Q_l$ of $p^{2m+2,3}$ with respect to a smooth $1$-section $l\times \P^2$ where $l \cong \P^1$ is a line on $Y^{2m}$. It can be computed similarly that the Hilbert scheme of planes on a quadric of dimension $2m+2$ has codimension at most $6$ in $\Gr(3,2m+4)$. We get that $\Q_l\to \P^2$ has a section for $m\geqslant 3$. Hence, $Y^{2m}$ is rational for $m\geqslant 3$.  Moreover, if $m=2$ and $[\A^4]=0$, then $\Q_l\cong \Q'\to \P^2$ has a rational section and thus $Y^4$ is rational. 
\end{proof}
\begin{remark}
We expect that Conjecture \ref{conj} holds for every flat quadric bundle of relative even dimension under the same hypotheses. If it is true, then we do not need to take hyperbolic reduction so that the results of quadric surface bundles can be applied. In this case, the assumption that $Y^{2m}$ contains a $\P^{m-1}$ can be removed for getting the semiorthogonal decomposition.
\end{remark}
%-----------------------------------------------------------------
\appendix
\section{Non-commutative schemes}\label{ncschsec}
In the appendix, we give an overview of derived categories of non-commutative schemes as in Definition \ref{ncdef} generalizing Appendix D in \cite{kuzhs} and \cite[\S2.2]{xienodaldp5}. We will discuss the relations among $\DD_{\QCoh}, \DD(\QCoh)$, $\DD(\Coh)$ and prove the projection formula (Proposition \ref{projf}).

\begin{defn}\label{ncdef}
A pair $(X,\A_X)$ is a \textit{non-commutative scheme} if $X$ is a noetherian scheme, $\A_X$ is a sheaf of $\O_X$-algebras and a quasi-coherent $\O_X$-module. A morphism 
\begin{equation*}
    H=(h, h_{\A})\colon (X,\A_X)\to (Y, \A_Y)
\end{equation*}
of non-commutative schemes consists of a morphism $h\colon X\to Y$ of schemes and a homomorphism $h_{\A}\colon h^*\A_Y\to \A_X$ of $\O_X$-algebras.
\end{defn}
Denote by 
\begin{itemize}
    \item $\A_X^{\op}$ the opposite algebra of $\A_X$,
    \item $\Mod(X, \A_X)$ the category of right $\A_X$-modules,
    \item $\QCoh(X,\A_X)$ the category of quasi-coherent sheaves on $X$ with right $\A_X$-module structures,
    \item $\Coh(X,\A_X)$ the category of coherent sheaves on $X$ with right $\A_X$-module structures.
\end{itemize}

Further denote by 
\begin{itemize}
    \item $\DD, \DD^-, \D$ the unbounded, bounded above and bounded derived categories,
    \item $\DD^*(X,\A_X)$ the derived category $\DD^*(\Coh(X,\A_X))$ for $*=\emptyset, -, \textrm{b}$,
    \item $\DD_{\QCoh}(X, \A_X)$ (resp., $\DD_{\Coh}(X, \A_X)$) the unbounded derived category of right $\A_X$-modules with quasi-coherent (resp., coherent) cohomologies.
\end{itemize}

There are pairs of adjoint functors
\begin{equation}\label{adfunctor}
    \begin{tikzcd}[column sep =large]
    \QCoh(X) \arrow[shift left=0.5ex]{r}{-\otimes \A_X} & \QCoh(X,\A_X) \arrow[shift left=0.5ex]{l}{j_Q},
    \end{tikzcd}
    \begin{tikzcd}[column sep =large]
    \Mod(X) \arrow[shift left=0.5ex]{r}{-\otimes \A_X} & \Mod(X,\A_X) \arrow[shift left=0.5ex]{l}{j_M}
    \end{tikzcd}
\end{equation}
where $j_Q, j_M$ are forgetful functors, and $-\otimes \A_X$ is left adjoint to $j_Q$ and $j_M$. When $\A_X$ is a coherent sheaf, there is an additional pair of adjoint functors
\begin{equation*}
    \begin{tikzcd}[column sep =large]
    \Coh(X) \arrow[shift left=0.5ex]{r}{-\otimes \A_X} & \Coh(X,\A_X) \arrow[shift left=0.5ex]{l}{j}
    \end{tikzcd}
\end{equation*}
where $j\colon \Coh(X, \A_X) \to \Coh(X)$ is the forgetful functor.

Recall that the \textit{coherator} of $X$ is the functor $Q_X$ right adjoint to the inclusion $\QCoh(X)\hookrightarrow \Mod(X)$. For example, if $X$ is affine, then $Q_X(\F)$ for $\F\in\Mod(X)$ is the quasi-coherent sheaf $\wt{\Gamma(X, \F)}$ associated with $\Gamma(X, \F)$. Note that $\wt{\Gamma(X, \F)}\in \QCoh(X,\A_X)$ if $\F\in \Mod(X,\A_X)$. Thus, $Q_X$ induces a coherator functor
\begin{equation}
    Q_{\A_X}\colon \Mod(X, \A_X) \to \QCoh(X,\A_X),
\end{equation}
which is right adjoint to the inclusion $\QCoh(X,\A_X)\hookrightarrow \Mod(X,\A_X)$.

Let $T$ be an abelian category and let $K(T)$ be its homotopy category. Recall that a complex $I^\bullet$ of objects in $T$ is called \textit{K-injective} if $\Hom_{K(T)}(M^\bullet, I^\bullet)=0$ for every acyclic complex $M^\bullet$. In particular, a bounded below complex of injectives is K-injective. 

For a scheme $X$ and a complex $\cK^\bullet$ of $\O_X$-modules, $\cK^\bullet$ is called \textit{K-flat} if the complex
\begin{equation*}
    \Tot(\F^\bullet\otimes_{\O_X} \cK^\bullet)
\end{equation*}
is acyclic for every acyclic complex $\F^\bullet$ of $\O_X$-modules. In particular, a bounded above complex of flat $\O_X$-modules is K-flat. We can define a similar notion for a non-commutative scheme $(X,\A_X)$. We say that a right $\A_X$-module $\cK$ is \textit{right flat} if $\cK \otimes_{\A_X} -$ is an exact functor on $\Mod(X, \A_X^{\op})$ and a complex $\cK^\bullet$ of right $\A_X$-modules is \textit{right K-flat} if the complex
\begin{equation*}
    \Tot(\cK^\bullet\otimes_{\A_X} \F^\bullet)
\end{equation*}
is acyclic for every acyclic complex $\F^\bullet$ of left $\A_X$-modules. As with before, a bounded above complex of right flat $\A_X$-modules is right K-flat. Replacing $\A_X$ by $\A_X^{\op}$, we get notions for left flat and left K-flat.

\begin{lemma}\label{injflat}
(1) $\Mod(X,\A_X)$, $\QCoh(X,\A_X)$ are Grothendieck abelian categories and every complex in $\Mod(X, \A_X)$ or $\QCoh(X,\A_X)$ has a K-injective resolution.

(2) For every complex $\G^\bullet$ of right $\A_X$-modules, there exists a right K-flat complex $\cK^\bullet$ whose terms are right flat $\A_X$-modules and a quasi-isomorphism $\cK^\bullet \to \G^\bullet$ which is termwise surjective. The same is true for complexes of left $\A_X$-modules.
\end{lemma}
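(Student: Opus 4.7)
For (1), my plan is to reduce both categories to standard Grothendieck abelian settings. The category $\Mod(X,\A_X)$ is the category of right modules over the sheaf of (noncommutative) rings $\A_X$ on the small Zariski site of $X$, which is Grothendieck abelian by the classical result for module categories over a sheaf of rings on a site: it is cocomplete, has exact filtered colimits (checkable stalkwise, where it reduces to the statement for ordinary rings), and admits a generator given by $\bigoplus_V j_{V!}(\A_X|_V)$, with $V$ ranging over an affine open base and $j_V\colon V\hookrightarrow X$ the inclusion. For $\QCoh(X,\A_X)$, I would observe that the forgetful functor $\QCoh(X,\A_X)\to\QCoh(X)$ is exact, faithful, and creates small colimits, while $\QCoh(X)$ is Grothendieck abelian since $X$ is noetherian; hence exactness of filtered colimits descends. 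A generator is obtained by applying the left adjoint $-\otimes_{\O_X}\A_X$ from \eqref{adfunctor} to any generator of $\QCoh(X)$.

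Granted the Grothendieck structure, the existence of K-injective resolutions for every complex (rather than just bounded below ones) follows from a general theorem valid in any Grothendieck abelian category, proved by Serpé (following Franke) and independently by Alonso Tarr\'io--Jerem\'ias L\'opez--Souto Salorio; see also the Stacks Project treatment. I would simply cite this result for both $\Mod(X,\A_X)$ and $\QCoh(X,\A_X)$.

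For (2), I plan to adapt Spaltenstein's construction of K-flat resolutions for $\O_X$-modules. The key observation is that for every open $j_V\colon V\hookrightarrow X$, the right $\A_X$-module $j_{V!}(\A_X|_V)$ is right flat, because tensoring $-\otimes_{\A_X}j_{V!}(\A_X|_V)$ identifies with the composite of restriction to $V$ and $j_{V!}$, both of which are exact; moreover these objects form a generating family, so every right $\A_X$-module is a quotient of a direct sum of such. Given $\G^\bullet$, I would first build a termwise surjective resolution $P^{\bullet,\bullet}\to\G^\bullet$ by such right flat modules via the Cartan--Eilenberg machine, then form $\cK^\bullet:=\Tot^\oplus(P^{\bullet,\bullet})$. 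To obtain K-flatness in the unbounded case, I would write $\cK^\bullet$ as the colimit of its brutal truncations $\cK^\bullet_{\geqslant -n}$ (which are bounded-above complexes of right flat modules and hence right K-flat) and use that the class of right K-flat complexes is stable under filtered colimits and countable homotopy colimits. The termwise surjectivity of $\cK^\bullet\to\G^\bullet$ is built into the construction. The statement for left modules is identical, applied to $\A_X^{\op}$.

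The main obstacle is technical rather than conceptual: one must check that the Spaltenstein totalization produces a right K-flat complex in the noncommutative setting, which requires verifying that right flatness is preserved under direct sums, extensions, and filtered colimits, and that $\Tot^\oplus$ commutes with $-\otimes_{\A_X}\F^\bullet$ up to the appropriate termwise filtration. These checks are formal consequences of the fact that $\otimes_{\A_X}$ is defined by the same coequalizer/tensor construction as $\otimes_{\O_X}$, so the argument proceeds verbatim as in the commutative case, with ``flat $\O_X$-module'' replaced throughout by ``right flat $\A_X$-module.''
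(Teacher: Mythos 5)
Part (1) of your argument matches the paper's: both treat $\QCoh(X,\A_X)$ as a Grothendieck abelian category by transporting the structure from $\QCoh(X)$ along the exact faithful forgetful functor, produce a generator via the left adjoint $-\otimes_{\O_X}\A_X$ from (\ref{adfunctor}), handle $\Mod(X,\A_X)$ analogously, and then invoke the general theorem that Grothendieck abelian categories have K-injective resolutions (you cite Serp\'e/ATJLSS, the paper cites \cite[Tag 079P]{stacks-project} --- the same result).

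For part (2), the first step is the same in spirit but differs in the choice of witness for ``enough right flats.'' The paper takes a surjection $\F\twoheadrightarrow j_M(\G)$ from a flat $\O_X$-module $\F$ and uses the unit of the adjunction to get a surjection $\F\otimes_{\O_X}\A_X\twoheadrightarrow\G$, noting $\F\otimes_{\O_X}\A_X$ is right flat since $(\F\otimes_{\O_X}\A_X)\otimes_{\A_X}-\cong\F\otimes_{\O_X}-$. You use the objects $j_{V!}(\A_X|_V)$ instead. Both are valid; the paper's choice is the one that lets it literally run the proof of \cite[Tag 06YF]{stacks-project} with ``flat $\O_X$-module'' replaced by ``right flat $\A_X$-module,'' whereas yours requires the extension-by-zero functors, which is more than the paper needs.

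However, the construction you propose for the K-flat resolution itself has a genuine gap. You form $\cK^\bullet:=\Tot^\oplus(P^{\bullet,\bullet})$ of a Cartan--Eilenberg flat resolution of $\G^\bullet$ and then argue K-flatness by writing $\cK^\bullet$ as a colimit of its brutal truncations $\cK^\bullet_{\geqslant -n}$, calling these ``bounded-above complexes of right flat modules.'' They are not: $\sigma_{\geqslant -n}\cK^\bullet$ is the \emph{bounded-below} subcomplex, and bounded-below complexes of flats need not be K-flat. If you instead meant $\sigma_{\leqslant n}\cK^\bullet$, that is bounded above but is a quotient, not a subcomplex, so one cannot take a filtered colimit over these inclusions. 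More fundamentally, $\Tot^\oplus$ of a Cartan--Eilenberg flat resolution of an unbounded complex need not be K-flat at all: if $\G^\bullet$ is already an unbounded acyclic complex of flats that is not K-flat (the standard example $\cdots\xrightarrow{x}R\xrightarrow{x}R\xrightarrow{x}\cdots$ over $R=\kk[x]/(x^2)$ has a sheaf-theoretic analogue), the trivial Cartan--Eilenberg resolution returns $\cK^\bullet=\G^\bullet$, which is not K-flat. The construction the paper cites sidesteps this entirely: it writes $\G^\bullet=\operatorname{colim}_n\tau_{\leqslant n}\G^\bullet$ using \emph{canonical} truncations (which \emph{are} bounded-above subcomplexes filtered-colimiting to $\G^\bullet$), resolves each $\tau_{\leqslant n}\G^\bullet$ by a bounded-above complex of right flats compatibly with the inclusions, and takes the colimit; that colimit is K-flat because a filtered colimit of K-flat complexes is K-flat, and it is quasi-isomorphic to $\G^\bullet$ because cohomology commutes with filtered colimits. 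You should replace the Cartan--Eilenberg/brutal-truncation paragraph with this colimit-of-canonical-truncations argument.
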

\begin{proof}
(1) $\QCoh(X)$ is a Grothendieck abelian category. The abelian category structure, direct sums and exact filtered colimits on $\QCoh(X)$ carry over to $\QCoh(X,\A_X)$. By the adjointness (\ref{adfunctor}), $\QCoh(X,\A_X)$ has a generator $U\otimes \A_X$ where $U$ is a generator of $\QCoh(X)$. Hence, $\QCoh(X,\A_X)$ is a Grothendieck abelian category. The existence of K-injective complexes follows from \cite[Tag 079P]{stacks-project}. The proof for $\Mod(X,\A_X)$ is similar.

(2) We only need to prove for complexes of right $\A_X$-modules. The proof is a modification of \cite[Tag 06YF]{stacks-project} and it suffices to show that $\Mod(X,\A_X)$ has enough right flat objects. We know that $\Mod(X)$ has enough flat objects. For $\G\in \Mod(X,\A_X)$, there is a surjection $\F\twoheadrightarrow j_M(\G)$ from a flat $\O_X$-module $\F$. Then its adjoint map $\F\otimes \A_X\to \G$ is also a surjection and $\F\otimes \A_X$ is a right flat $\A_X$-module. Now $\cK^\bullet$ can be constructed in the same way as \textit{loc. cit.}, which is the filtered colimit of a nice sequence of bounded above complexes of right flat $\A_X$-modules.
\end{proof}

Thanks to the lemma above, it makes sense to talk about the right adjoint functor $RQ_{\A_X}$.

\begin{lemma}\label{qceqlem}
The natural functor $\DD(\QCoh(X,\A_X))\to \DD_{\QCoh}(X,\A_X)$ is an equivalence with quasi-inverse given by $RQ_{\A_X}$.
\end{lemma}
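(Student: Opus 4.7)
I would follow the template of the classical commutative statement, that $\DD(\QCoh(X))\xrightarrow{\sim}\DD_{\QCoh}(X)$ for a noetherian scheme, adapted to the $\A_X$-module setting using the tools gathered in Lemma \ref{injflat}. The natural functor $F\colon \DD(\QCoh(X,\A_X))\to \DD_{\QCoh}(X,\A_X)$ is induced by the exact forgetful inclusion $\QCoh(X,\A_X)\hookrightarrow \Mod(X,\A_X)$. By Lemma \ref{injflat}(1), both sides admit K-injective resolutions, so the coherator $Q_{\A_X}$, being right adjoint to this inclusion at the abelian level, possesses a right derived functor $RQ_{\A_X}\colon \DD(\Mod(X,\A_X))\to \DD(\QCoh(X,\A_X))$; its restriction to $\DD_{\QCoh}(X,\A_X)$ is the proposed quasi-inverse, and the derived adjunction $F\dashv RQ_{\A_X}$ then follows formally. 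It remains to show that the unit $\id\to RQ_{\A_X}\circ F$ and the counit $F\circ RQ_{\A_X}\to \id$ are isomorphisms.

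For the unit, I would reduce to the statement that for $M\in \QCoh(X,\A_X)$ one has $Q_{\A_X}(M)=M$ (tautological) and $R^iQ_{\A_X}(M)=0$ for $i>0$. This cohomological vanishing is the key technical point: it amounts to showing that an injective resolution of $M$ in $\QCoh(X,\A_X)$ remains $Q_{\A_X}$-acyclic when viewed inside $\Mod(X,\A_X)$. Using that $X$ is noetherian, every right $\A_X$-module is the filtered colimit of its coherent submodules, and the Baer criterion combined with the left adjoint $-\otimes \A_X$ from (\ref{adfunctor}) lets one upgrade injectivity in $\QCoh(X,\A_X)$ to injectivity in $\Mod(X,\A_X)$, generalizing the classical fact that quasi-coherent injective $\O_X$-modules on a noetherian scheme are injective as $\O_X$-modules. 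A standard Cartan--Eilenberg/hypercohomology spectral sequence then promotes this to arbitrary K-injective resolutions in $\QCoh(X,\A_X)$, yielding the unit isomorphism.

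For the counit, I would take $N\in \DD_{\QCoh}(X,\A_X)$ together with a K-injective resolution $J^\bullet$ of $N$ in $\Mod(X,\A_X)$ and check that $Q_{\A_X}(J^\bullet)\to J^\bullet$ is a quasi-isomorphism. This is local on $X$, and on each affine open $U\subset X$ the coherator is the tautological $\wt{\Gamma(U,-)}$ which is the identity on quasi-coherent modules; combining this local triviality with the higher vanishing established in the unit step via the two-variable spectral sequence produces the counit isomorphism. The main obstacle throughout is the injectivity-preservation at the heart of the unit calculation, namely promoting $\QCoh(X,\A_X)$-injectivity to $\Mod(X,\A_X)$-injectivity on noetherian $X$; once this is secured, the remaining arguments parallel the commutative case.
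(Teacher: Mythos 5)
Your proposal takes a genuinely different and considerably longer route than the paper. The paper's proof is a one-line reduction: the coherators $Q_{\A_X}\colon\Mod(X,\A_X)\to\QCoh(X,\A_X)$ and $Q_X\colon\Mod(X)\to\QCoh(X)$ commute with the forgetful functors $j_M$ and $j_Q$ (both composites are right adjoint to $\F\mapsto\F\otimes\A_X$, so they agree by uniqueness of adjoints), and since the forgetful functors are exact and conservative, the unit and counit checks for $RQ_{\A_X}$ reduce immediately to the commutative statement \cite[Tag 09T4]{stacks-project}. You instead propose to replay the entire commutative argument from scratch in the $\A_X$-module setting, re-proving rather than reducing.

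The route you sketch has a gap precisely at the step you flag as ``the main obstacle'': establishing that objects injective in $\QCoh(X,\A_X)$ remain injective (or at least $Q_{\A_X}$-acyclic) in $\Mod(X,\A_X)$. The mechanism you propose is misdirected. The adjunction $(-\otimes\A_X)\dashv j_M$ from (\ref{adfunctor}) can only show, and only when $\A_X$ is flat over $\O_X$, that the forgetful functor carries injectives of $\Mod(X,\A_X)$ to injectives of $\Mod(X)$; it says nothing about lifting injectivity along the inclusion $\QCoh(X,\A_X)\hookrightarrow\Mod(X,\A_X)$, which is the direction you need. The classical commutative fact you appeal to (injective quasi-coherent $\O_X$-modules on a locally noetherian scheme are injective $\O_X$-modules) rests on the Matlis-type classification of quasi-coherent injectives over a noetherian ring, and transferring this to $\A_X$-modules would plausibly require $\A_X$ to be coherent (noetherian on affines) -- but the lemma is stated with $\A_X$ merely quasi-coherent. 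The paper's reduction sidesteps all of this by never re-deriving the commutative statement. If you do want a direct argument, the cleanest fix is still to go through the forgetful functor: once you observe $j_Q\circ Q_{\A_X}=Q_X\circ j_M$ and that $j_Q,j_M$ are exact and conservative, the problem collapses to Tag 09T4 and the injectivity question you were fighting with never has to be answered in the $\A_X$-setting.
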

\begin{proof}
Since the coherator functors $Q_{\A_X}\colon \Mod(X, \A_X)\to \QCoh(X,\A_X)$ and $Q_X\colon \Mod(X)\to \QCoh(X)$ commute with forgetful functors, the claim is a consequence of \cite[Tag 09T4]{stacks-project}.
\end{proof}

\begin{lemma}\label{coheqlem}
When $\A_X$ is coherent, the natural functors 
\begin{equation*}
    \DD^*(X,\A_X)\to \DD_{\Coh}^*(\QCoh(X,\A_X)) \to \DD_{\Coh}^*(X,\A_X)  
\end{equation*}
for $*=-, \textrm{b}$ are equivalences.
\end{lemma}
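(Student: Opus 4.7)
I would attack the two functors separately. For the first functor $\DD^*(X,\A_X) = \DD^*(\Coh(X,\A_X)) \to \DD_{\Coh}^*(\QCoh(X,\A_X))$, the strategy is the standard weak-Serre-subcategory argument (as in \cite[Tag 0FCL]{stacks-project}), which requires verifying two hypotheses:
\begin{itemize}
\item[(a)] $\Coh(X,\A_X)$ is a weak Serre subcategory of $\QCoh(X,\A_X)$, i.e.\ it is closed under kernels, cokernels, and extensions. Since the forgetful functor $j_Q\colon \QCoh(X,\A_X)\to \QCoh(X)$ is exact and the $\A_X$-action is automatically inherited, (co)kernels and extensions computed in $\QCoh(X,\A_X)$ agree on underlying $\O_X$-modules with those in $\QCoh(X)$. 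Coherence is then preserved because $X$ is noetherian.
\item[(b)] Every object of $\QCoh(X,\A_X)$ is a filtered union of coherent sub-$\A_X$-modules. Given $\F \in \QCoh(X,\A_X)$, write the quasi-coherent $\O_X$-module $j_Q(\F)$ as a filtered union $\bigcup_i \F_i$ of coherent $\O_X$-submodules. The image of the $\A_X$-action map $\F_i \otimes_{\O_X} \A_X \to \F$ is a sub-$\A_X$-module of $\F$; since $\A_X$ is coherent the source is coherent, so the image is a coherent sub-$\A_X$-module. The resulting system is filtered and exhausts $\F$.
\end{itemize}
Granted (a) and (b), the standard inductive truncation argument (choose the top nonzero cohomology, approximate it by a coherent sub-$\A_X$-module, replace, iterate downward) produces for every $\F^\bullet \in \DD_{\Coh}^-(\QCoh(X,\A_X))$ a quasi-isomorphic bounded-above complex of coherent $\A_X$-modules, and gives full faithfulness as well. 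The bounded case $*=\mathrm{b}$ follows by a further truncation.

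For the second functor $\DD_{\Coh}^*(\QCoh(X,\A_X)) \to \DD_{\Coh}^*(X,\A_X)$, I would simply restrict the equivalence of Lemma \ref{qceqlem}. The forgetful functor $\QCoh(X,\A_X)\hookrightarrow \Mod(X,\A_X)$ is exact, so it preserves cohomology sheaves; hence the equivalence $\DD(\QCoh(X,\A_X)) \xrightarrow{\sim} \DD_{\QCoh}(X,\A_X)$ matches the subcategories cut out by the cohomological condition ``each $\cH^i$ is coherent'' on both sides. Restricting to these subcategories, and then to their bounded-above or bounded versions, yields the claimed equivalence for $*=-,\mathrm{b}$.

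The main obstacle I anticipate is the verification in (b) that one can pass from arbitrary $\O_X$-coherent subsheaves to $\A_X$-coherent sub-$\A_X$-modules while preserving the filtered colimit property; the key ingredient there is the coherence hypothesis on $\A_X$, without which $\F_i\otimes_{\O_X}\A_X$ need not be coherent. Once this is in hand, the rest is a mechanical translation of the classical argument for schemes (the case $\A_X = \O_X$) to the non-commutative setting.
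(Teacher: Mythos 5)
Your proposal is correct and takes essentially the same route as the paper: the key step in both is to exhibit any quasi-coherent $\A_X$-module as a filtered union of coherent $\A_X$-submodules by taking images of $\F_i \otimes_{\O_X} \A_X \to \F$ for a filtered exhaustion $\{\F_i\}$ of the underlying $\O_X$-module, using coherence of $\A_X$, and then invoking \cite[Tag 0FCL]{stacks-project} together with the restriction of Lemma~\ref{qceqlem} for the second functor. You spell out the weak Serre subcategory verification and the truncation mechanism, which the paper leaves implicit via its reference to Tag 0FDA, but the substance is identical.
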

\begin{proof}
The equivalence of the second functor follows from Lemma \ref{qceqlem}. For the first functor, we will modify the proof in \cite[Tag 0FDA]{stacks-project}. We claim that if there is a surjection $\G \twoheadrightarrow \F$ for $\F\in \Coh(X,\A_X)$ and $\G\in \QCoh(X,\A_X)$, then there is some $\G'\in \Coh(X,\A_X)$ that surjects onto $\F$. Consequently, the first functor is an equivalence by \cite[Tag 0FCL]{stacks-project}. We know $j_Q(\G)$ is a filtered union of coherent submodules $\G_i$. Let $\G_i'= \IIm(\G_i\otimes \A_X\to \G)$ be the image of the map adjoint to the inclusion $\G_i\hookrightarrow j_Q(\G)$. Then $\G$ is the filtered union of coherent $\A_X$-submodules $\G_i'$ and one of them will be $\G'$. 
\end{proof}

Given a morphism $H=(h, h_{\A})\colon (X,\A_X)\to (Y, \A_Y)$, a \textit{push-forward} functor is defined by
\begin{equation*}
    H_* \colon  \QCoh(X,\A_X) \to  \QCoh(Y,\A_Y)
\end{equation*}
where as a quasi-coherent sheaf, $H_*\F$ is given by $h_*\F$ for $\F\in \QCoh(X,\A_X)$, and a right $\A_Y$-module structure on $h_*\F$ is induced by
\begin{equation*}
    (h_*\F)\otimes \A_Y \cong h_*(\F\otimes h^*\A_Y) \xrightarrow{h_{\A}} h_*(\F\otimes \A_X)\to h_*\F.
\end{equation*}
A \textit{pull-back} functor is defined by
\begin{equation*}
    H^*\colon \QCoh(Y,\A_Y) \to \QCoh(X,\A_X)
\end{equation*}
where for $\G\in \QCoh(Y,\A_Y)$,
\begin{equation*}
    H^*\G:=(h^*\G)\otimes_{h^*\A_Y} \A_X \cong (h^{-1}\G)\otimes_{h^{-1}\A_Y} \A_X.
\end{equation*}

In the lemma below, we keep the same notations $RH_*$, $LH^*$ for the derived push-forward and pull-back functors induced from the original ones, respectively.
\begin{lemma}\label{derivedfun}
Let $H=(h, h_{\A})\colon (X,\A_X)\to (Y, \A_Y)$ be a morphism between non-commutative schemes.

(1) There exists a right derived functor
\begin{equation*}
    RH_* \colon \DD(\QCoh(X,\A_X)) \to \DD(\QCoh(Y,\A_Y)).
\end{equation*}
When $h$ is proper and $\A_Y$ is coherent, it induces
\begin{equation*}
    RH_* \colon \DD^*(X,\A_X) \to \DD^*(Y,\A_Y)
\end{equation*}
for $*=-, \textrm{b}$.

(2) There exists a left derived functor
\begin{equation*}
    LH^*\colon \DD(\QCoh(Y,\A_Y)) \to \DD(\QCoh(X,\A_X)).
\end{equation*}
When $\A_X$ is coherent, it induces
\begin{equation*}
    LH^*\colon \DD^-(Y,\A_Y) \to \DD^-(X,\A_X).
\end{equation*}

(3) $H^* \dashv H_*$ and $LH^*\dashv RH_*$ are adjoint functors.
\end{lemma}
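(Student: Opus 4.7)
The plan is to construct both derived functors using the K-injective and K-flat resolutions furnished by Lemma \ref{injflat}, combined with the equivalence $\DD(\QCoh(X,\A_X))\cong\DD_{\QCoh}(X,\A_X)$ from Lemma \ref{qceqlem} that lets us move between complexes of quasi-coherent modules and complexes of all $\A_X$-modules with quasi-coherent cohomology. For the boundedness and coherence claims I would then invoke Lemma \ref{coheqlem} to identify $\DD^*(X,\A_X)$ with $\DD^*_{\Coh}(\QCoh(X,\A_X))$ and verify that the constructed derived functors preserve this subcategory.

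For part (1), given $\F^\bullet\in\DD(\QCoh(X,\A_X))$, pick a K-injective resolution $\F^\bullet\to I^\bullet$ in $\QCoh(X,\A_X)$ by Lemma \ref{injflat} (1) and set $RH_*\F^\bullet := H_*(I^\bullet)$; independence of the resolution, functoriality and compatibility with the triangulated structure are standard consequences of K-injectivity. For boundedness, the key observation is that the forgetful functor $\QCoh(Y,\A_Y)\to \QCoh(Y)$ intertwines $RH_*$ with the ordinary derived push-forward $Rh_*$, because the $\A_Y$-module structure on $H_*(-)$ is induced functorially from the $\A_X$-structure through $h_{\A}$ and commutes with taking cohomology. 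Consequently, when $h$ is proper and $\F\in\Coh(X,\A_X)$, the cohomology sheaves of $RH_*\F$ are coherent as $\O_Y$-modules by the classical finiteness theorem, and coherent as $\A_Y$-modules once $\A_Y$ is assumed coherent. Boundedness follows from the finite cohomological dimension of $h$ on a noetherian scheme, and Lemma \ref{coheqlem} transports $RH_*$ to a functor on $\DD^*(X,\A_X)$.

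For part (2), Lemma \ref{injflat} (2) gives, for any $\G^\bullet\in\DD(\QCoh(Y,\A_Y))$, a termwise-surjective quasi-isomorphism $\cK^\bullet\to\G^\bullet$ with $\cK^\bullet$ a right K-flat complex of right flat $\A_Y$-modules (a priori in $\Mod(Y,\A_Y)$). I would set $LH^*\G^\bullet := H^*(\cK^\bullet)\in\DD(\Mod(X,\A_X))$, check that it has quasi-coherent cohomology (since the $\cK^n$ arise as colimits of quasi-coherent objects in the construction), and invoke Lemma \ref{qceqlem} to view it in $\DD(\QCoh(X,\A_X))$. Independence of the resolution is a standard double-complex argument using K-flatness. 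For $\G^\bullet\in\DD^-$ the K-flat resolution may be chosen bounded above by truncation, yielding the $\DD^-$-version. Coherence preservation when $\A_X$ is coherent follows because $Lh^*$ preserves bounded-above coherent cohomology on $\O$-modules, and $\A_X$ is a coherent right $h^*\A_Y$-module, so that tensoring over $h^*\A_Y$ with $\A_X$ keeps the cohomology coherent as an $\A_X$-module.

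Part (3) is then formal: computing $LH^*$ via K-flat resolutions and $RH_*$ via K-injective resolutions, the underived adjunction $H^*\dashv H_*$ upgrades termwise to the derived adjunction $LH^*\dashv RH_*$. The step that I expect to require the most care is verifying the compatibility of $RH_*$ and $LH^*$ with the forgetful functors to the underlying $\O$-modules, which is exactly what reduces the coherence and boundedness checks to their classical counterparts; concretely, one must check that K-injective resolutions in $\QCoh(X,\A_X)$ behave well under the forgetful functor composed with $H_*$, and that right K-flat complexes in $\Mod(Y,\A_Y)$ become K-flat as $\O_Y$-module complexes after forgetting the $\A_Y$-action, so that the computation of $LH^*$ genuinely refines the classical $Lh^*$ at the level of underlying $\O$-modules.
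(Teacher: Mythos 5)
Your overall strategy matches the paper's: construct $RH_*$ from K-injective resolutions furnished by Lemma \ref{injflat}(1), construct $LH^*$ from the right K-flat resolutions of Lemma \ref{injflat}(2) followed by the derived coherator $RQ_{\A_X}$, use Lemma \ref{coheqlem} to transfer boundedness/coherence statements to $\DD^*(X,\A_X)$, and prove part (3) by upgrading the underived adjunction. The paper's part (3) is exactly the ``formal'' calculation you describe (it writes out $\Hom_{\A_X}(h^*\G\otimes_{h^*\A_Y}\A_X,\F)\cong\Hom_{\A_Y}(\G,H_*\F)$ explicitly).

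The genuine problem is in your closing paragraph, where you reduce the coherence check for $LH^*$ to classical $Lh^*$ via the claim that ``right K-flat complexes in $\Mod(Y,\A_Y)$ become K-flat as $\O_Y$-module complexes after forgetting the $\A_Y$-action.'' This is false in general. Right flatness of $\cK$ over $\A_Y$ means $\cK\otimes_{\A_Y}(-)$ is exact, which has no bearing on exactness of $\cK\otimes_{\O_Y}(-)$ unless $\A_Y$ is $\O_Y$-flat, and the lemma does not assume $\A_Y$ flat (indeed in part (2) no condition is imposed on $\A_Y$ at all). Concretely, the flat objects used to build $\cK^\bullet$ in Lemma \ref{injflat}(2) are of the form $\F\otimes_{\O_Y}\A_Y$ with $\F$ an $\O_Y$-flat module; such an object is $\O_Y$-flat only when $\A_Y$ is. Moreover, even if this were repaired, the factorization $LH^*\cong\bigl(-\otimes^{\LL}_{h^*\A_Y}\A_X\bigr)\circ Lh^*$ you implicitly invoke needs its own justification, and the step ``tensoring over $h^*\A_Y$ with $\A_X$ keeps the cohomology coherent'' is not a formal consequence of $\A_X$ being coherent: there is no assumption on $h^*\A_Y$, so resolutions over it need not be by finite free modules. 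The paper sidesteps all of this with a direct local argument at the level of the underived functor: for $\G\in\Coh(Y,\A_Y)$ and affine charts $(\Spec A,\wt{R_A})$, $(\Spec B,\wt{R_B})$ with $\G\cong\wt{M}$, a surjection $B^n\twoheadrightarrow M$ induces $R_B^n\twoheadrightarrow M$ and hence $R_A^n\twoheadrightarrow M\otimes_{R_B}R_A=H^*\G$, and coherence of $\A_X$ (i.e. $R_A$ finitely generated over $A$) gives coherence of $H^*\G$. You should replace your intertwining reduction with this local surjection argument, which uses exactly the hypothesis actually imposed.

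A smaller point: for the coherence of $RH_*$ you likewise rely on an intertwining ``K-injectives forget to something computing $Rh_*$.'' The paper does not spell this out either, so I won't press it, but be aware that it is not automatic: the forgetful functor preserves injectives only when its left adjoint $-\otimes\A_X$ is exact, again a flatness condition. So this compatibility requires a separate argument (e.g.\ via $h_*$-acyclicity of injectives in $\QCoh(X,\A_X)$, or via Čech resolutions) rather than being ``behaves well'' for free.
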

\begin{proof}
(1) By Lemma \ref{injflat} (1), the K-injective resolutions exist for $\DD(\QCoh(X,\A_X))$ and thus the right derived functor $RH_*$ can be defined. When $h$ is proper, we have an induced functor
\begin{equation*}
    RH_* \colon \DD_{\Coh}(\QCoh(X,\A_X)) \to \DD_{\Coh}(\QCoh(Y,\A_Y)).
\end{equation*}
When $\A_Y$ is coherent, from Lemma \ref{coheqlem}, we get the right derived functor
\begin{equation*}
    RH_* \colon \DD^*(X,\A_X) \to \DD_{\Coh}^*(\QCoh(X,\A_X)) \to \DD_{\Coh}^*(\QCoh(Y,\A_Y))\cong \DD^*(Y,\A_Y).
\end{equation*}
for $*=-, \textrm{b}$.

(2) Given $\G^\bullet \in \DD(\QCoh(Y,\A_Y))$, we define
\begin{equation*}
    LH^*\G^\bullet:= RQ_{\A_X}(H^*\cK^\bullet) \in \DD(\QCoh(X,\A_X))
\end{equation*}
where $\cK^\bullet$ is the right K-flat resolution of $\G^\bullet$ constructed in Lemma \ref{injflat} (2) and $RQ_{\A_X}$ is the derived coherator in Lemma \ref{qceqlem}. Standard arguments show that this is well-defined.

Given $\G\in \Coh(Y, \A_Y)$, we claim that $H^*\G\in\Coh(X,\A_X)$ when $\A_X$ is coherent. This is a local question. Assume that $(X,\A_X) \cong (\Spec A, \wt{R_A})$, $(Y,\A_Y) \cong (\Spec B, \wt{R_B})$ and $\G \cong \wt{M}$. There is a surjection $B^n\twoheadrightarrow M$ for some $n$ and it induces surjections
\begin{equation*}
    R_B^n\twoheadrightarrow M, \quad R_A^n\twoheadrightarrow M\otimes_{R_B} R_A.
\end{equation*}
Then $M\otimes_{R_B} R_A$ is a finitely generated $A$-module because $R_A$ is such. 

Given $\G^\bullet \in \DD^-(Y,\A_Y)$, we have
\begin{equation*}
    LH^*\G^\bullet \in \DD_{\Coh}^-(\QCoh(X,\A_X)) \cong \DD^-(X,\A_X),
\end{equation*}
where the equivalence is given by Lemma \ref{coheqlem}.

(3) It suffices to prove the adjointness for $H^*\dashv H_*$. For $\F\in \Mod(X,\A_X)$ and $\G\in\Mod(Y,\A_Y)$, we have
\begin{align*}
    \Hom_{\A_X}(H^*\G,\F) & \cong\Hom_{\A_X}(h^*\G\otimes_{h^*\A_Y}\A_X, \F) \\
    & \cong \Hom_{h^*\A_Y}(h^*\G, \F)\\
    & \cong \Hom_{\A_Y}(\G, H_*\F).
\end{align*}
\end{proof}

\begin{prop}[Projection formula] \label{projf}
Let $H=(h, h_{\A})\colon (X,\A_X)\to (Y, \A_Y)$ be a morphism of non-commutative schemes in Definition \ref{ncdef}. 

(1) Given $\F\in \DD(\QCoh(X,\A_X^{\op}))$ and $\G\in \DD(\QCoh(Y,\A_Y))$, there is a natural map 
\begin{equation}\label{projmap}
    \G \otimes_{\A_Y}^{\LL} RH_*(\F) \to RH_*(LH^*(\G) \otimes_{\A_X}^{\LL} \F),
\end{equation}
and it is an isomorphism in $\DD(\QCoh(Y))$.

(2) Assume that $h:X\to Y$ is proper and $\A_X, \A_Y$ are coherent. Given $\F\in \DD^-(X,\A_X^{\op})$ and $\G\in \DD^-(Y,\A_Y)$, the natural map (\ref{projmap}) is an isomorphism in $\DD^-(Y)$.
\end{prop}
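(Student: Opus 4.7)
My plan is to first construct the natural map (\ref{projmap}) by the standard adjunction recipe: both sides are naturally $\O_Y$-modules, since $LH^*\G \otimes_{\A_X}^{\LL} \F$ simplifies to $Lh^*\G \otimes_{h^*\A_Y}^{\LL} \F$ via $LH^*\G = Lh^*\G \otimes_{h^*\A_Y}^{\LL} \A_X$ together with $\A_X\otimes_{\A_X}^{\LL}\F\cong\F$, and $RH_*\F$ agrees with $Rh_*\F$ as an $\O_Y$-module. The map (\ref{projmap}) is then the projection-formula map for the morphism $h\colon X\to Y$ enriched over the sheaves of algebras $\A_Y$ on $Y$ and $h^*\A_Y$ on $X$, constructed from the usual $\O$-module projection map together with the evident $\A_Y$-bilinearity on both sides.

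The core of the proof is a bar-resolution argument. Consider the two-sided bar simplicial object of right $\A_Y$-modules with $n$-th term $B_n = \G \otimes_{\O_Y}^{\LL} \A_Y^{\otimes(n+1)}$ (derived tensor powers over $\O_Y$, formed after choosing a K-flat $\O_Y$-resolution of $\A_Y$ and of $\G$) and the standard face maps involving the right $\A_Y$-action on $\G$ and the multiplication in $\A_Y$; its totalization is canonically quasi-isomorphic to $\G$. The key observation is that the induced-module terms satisfy
\[(\F' \otimes_{\O_Y}^{\LL} \A_Y) \otimes_{\A_Y}^{\LL} M \cong \F' \otimes_{\O_Y}^{\LL} M, \qquad Lh^*(\F' \otimes_{\O_Y}^{\LL} \A_Y) \otimes_{h^*\A_Y}^{\LL} \F \cong Lh^*\F' \otimes_{\O_X}^{\LL} \F,\]
so on each $B_n$ the map (\ref{projmap}) reduces to the ordinary projection formula
\[\F' \otimes_{\O_Y}^{\LL} Rh_*\F \xrightarrow{\cong} Rh_*(Lh^*\F' \otimes_{\O_X}^{\LL} \F),\]
which holds in $\DD(\QCoh)$ for arbitrary morphisms of noetherian schemes (see e.g.\ \cite[Tag 08EU]{stacks-project}). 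Totalizing over the simplicial degree on both sides, and using that $Rh_*$ and $-\otimes^{\LL}-$ commute with the relevant homotopy colimits, yields (\ref{projmap}) as an isomorphism in $\DD(\QCoh(Y))$, proving (1). Part (2) then follows from Lemma \ref{derivedfun} (which gives that $RH_*$ and $LH^*$ restrict to $\DD^-$ under the additional properness and coherence hypotheses) combined with Lemma \ref{coheqlem}.

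The main obstacle is the rigorous justification of the interchange of the totalization of the bar complex with $Rh_*$ and $-\otimes_{\A_Y}^{\LL}-$. Since $\A_Y$ is not assumed flat over $\O_Y$, the iterated tensor products $\A_Y^{\otimes(n+1)}$ must be computed after replacing $\A_Y$ by a K-flat resolution as an $\O_Y$-module, which ensures the induced-module identifications above are valid. Moreover the bar complex is unbounded, so its totalization is a homotopy colimit whose commutation with $Rh_*$ and with derived tensor product must be verified; in case (1) this invokes the K-injective/K-flat machinery of Lemma \ref{injflat}, while in the bounded-above case (2) a truncation argument reduces the question to finite simplicial diagrams where the commutations are automatic.
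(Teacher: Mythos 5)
Your core strategy is the right one, and it matches the spirit of what the paper does: construct the map by the $LH^*\dashv RH_*$ adjunction, observe that on an induced right $\A_Y$-module $\F'\otimes_{\O_Y}\A_Y$ both sides of (\ref{projmap}) collapse to the ordinary $\O$-module projection formula $\F'\otimes^{\LL}_{\O_Y}Rh_*\F\to Rh_*(Lh^*\F'\otimes^{\LL}_{\O_X}\F)$, and then resolve a general $\G$ by such induced modules. The paper outsources the details to Lemma~2.5 of the reference it cites, but the reduction to induced modules is evidently the mechanism it has in mind, as the appendix is set up precisely to produce such resolutions.

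Where your write-up has a real gap is in the choice of the bar complex as the resolving object. With no flatness hypothesis on $\A_Y$ over $\O_Y$, the bar terms $\G\otimes_{\O_Y}\A_Y^{\otimes(n+1)}$ are not flat right $\A_Y$-modules, so the bar complex is not a K-flat resolution and cannot be used to compute $\G\otimes^{\LL}_{\A_Y}-$. Your proposed fix, replacing $\A_Y$ and $\G$ by K-flat $\O_Y$-resolutions before forming iterated tensor powers, silently replaces the sheaf of algebras $\A_Y$ by a complex; to keep the simplicial face maps you would then need a model of $\A_Y$ as a sheaf of dg-algebras on $Y$ and a theory of modules over it, none of which is available in the framework of Definition~\ref{ncdef}, and you do not supply it. The unbounded-totalization and homotopy-colimit commutation issues you flag at the end are genuinely secondary to this: they would be manageable for a bounded-above K-flat resolution, but the bar complex you describe is not one until the dg-algebra point is resolved.

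The clean way to carry out exactly your reduction, with no new machinery, is already in the appendix: Lemma~\ref{injflat}(2) produces a right K-flat resolution $\cK^\bullet\to\G$ whose terms are right flat $\A_Y$-modules of the induced form $\cP_i\otimes_{\O_Y}\A_Y$ with $\cP_i$ flat over $\O_Y$ (this is explicit in the proof of that lemma, which imitates the Stacks Project construction). With this $\cK^\bullet$ in place of the bar complex, the termwise identifications you wrote down become literal, $-\otimes_{\A_Y}\cK^\bullet$ and $h^*\cK^\bullet\otimes_{h^*\A_Y}-$ compute the derived tensors, and the comparison to the $\O$-module projection formula goes through term by term (using that $h$ is quasi-compact and quasi-separated since $X$, $Y$ are noetherian, so $Rh_*$ commutes with the relevant colimits in $\DD_{\QCoh}$). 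Part~(2) then follows exactly as you say from Lemma~\ref{derivedfun} and Lemma~\ref{coheqlem}. I would recommend replacing the bar-resolution paragraph by a direct appeal to Lemma~\ref{injflat}(2); as written, that paragraph asserts more than it proves.
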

\begin{proof}
The derived functors involved are defined in Lemma \ref{derivedfun} and the natural map (\ref{projmap}) is induced by the adjointness $LH^*\dashv RH_*$. The proof of the proposition is the same as the proof of Lemma 2.5 in \cite{xienodaldp5}.
\end{proof}

\bibliography{qsb.bib}
\bibliographystyle{abbrv}
\end{document}